\documentclass{article}

\usepackage{graphicx}

\usepackage{amsmath,amssymb,enumerate,mathrsfs}
\usepackage{theorem}

\newtheorem{theorem}{Theorem}[section]
\newtheorem{corollary}[theorem]{Corollary}
\newtheorem{lemma}[theorem]{Lemma}
\newtheorem{proposition}[theorem]{Proposition}
\theoremstyle{definition}
\newtheorem{definition}[theorem]{Definition}
\newtheorem{example}[theorem]{Example}

\newtheorem{remark}[theorem]{Remark}
\newtheorem{alg}[theorem]{Algorithm}
\newtheorem{fact}[theorem]{Fact}

\usepackage{enumitem}

\numberwithin{equation}{section}

%
\setlength\topmargin{-12pt}
\setlength\headheight{21.6pt}
\setlength\headsep{16.8pt}

\setlength\textheight{20.8cm}
\setlength\textwidth{14cm}

\newcommand{\dom}{\ensuremath{\operatorname{dom}}}
\def\Sdual{{S_{\rm dual}}}
\def\RR{{\mathbb{R}}}

\def\NN{{\mathbb{N}}}
\def\kkk{{k\in\NN}}
\def\mcN{{\mathcal{N}}}
\def\mcF{{\mathcal{F}}}
\def\mcP{{\mathcal{P}}}
\def\disp{\displaystyle}
\newcommand{\scal}[2]{\left\langle{#1},{#2}  \right\rangle}

\def\ox{\overline{x}}

\def\ov{\overline{v}}
\def\tx{\tilde{x}}
\def\dd{\delta}
\def\emp{\varnothing}
\def\ra{\rangle}
\def\la{\langle}
\newcommand{\bdry}{\ensuremath{\operatorname{bd}}}
\def\proof{\noindent{\it Proof}. \ignorespaces}
\def\endproof{\ensuremath{\hfill \quad \blacksquare}}
\newcommand{\Id}{\ensuremath{\operatorname{Id}}}

\newenvironment{retraitsimple}{\begin{list}{--~}{
 \topsep=0.3ex \itemsep=0.3ex \labelsep=0em \parsep=0em
 \listparindent=1em \itemindent=0em
 \settowidth{\labelwidth}{--~} \leftmargin=\labelwidth
}}{\end{list}}

\theoremstyle{plain}{\theorembodyfont{\rmfamily}

\newenvironment{Calg}[1]
  {\customC}
  {\endcustomC}

\theoremstyle{plain}{\theorembodyfont{\rmfamily}

\newenvironment{linesr}[1]
  {\customL}
  {\endcustomL}


\begin{document}
\makeatletter

\begin{center}
\large{\bf Conditional Extragradient Algorithms for Solving Variational Inequalities}
\end{center}\vspace{5mm}
\begin{center}

\textsc{J.Y. Bello-Cruz\footnote{Department of Mathematical Sciences, Northern Illinois University, 
DeKalb, IL \ 60115, USA.
E-mail: {yunierbello@niu.edu}}, R. D\'iaz Mill\'an \footnote{
Federal Institute Goi\'as, Goi\^ania,
GO \ 74.055-110, Brazil. E-mail: { rdiazmillan@gmail.com}}\footnote{IME, Federal University of Goi\'as, Campus II - 74690-900 - Goi\^ania, GO - Brazil}
and Hung M. Phan\footnote{Department of Mathematical Sciences, Kennedy College of Sciences,
University of Massachusetts Lowell, Lowell, MA \ 01854, USA. E-mail: {hung\_phan@uml.edu} }} \end{center}

\vspace{2mm}

\footnotesize{
\noindent\begin{minipage}{14cm}
{\bf Abstract:}
In this paper, we generalize the classical extragradient algorithm for solving variational inequality problems by utilizing nonzero normal vectors of the feasible set. In particular, conceptual algorithms are proposed with two different linesearchs. We then establish convergence results for these algorithms under mild assumptions. Our study suggests that nonzero normal vectors may significantly improve convergence if chosen appropriately.
\end{minipage}
 \\[5mm]

\noindent{\bf Keywords:} {Armijo-type linesearch, extragradient
algorithm, projection algorithms.}\\
\noindent{\bf Mathematics Subject Classification:}
{Primary: 
58E35,
Secondary: 
49J40,
65K15.}

\hbox to14cm{\hrulefill}\par


\section{Introduction}
In this work, we present conditional extragradient algorithms for solving generally
constrained variational inequality problems by using nonzero normal vectors of the feasible set. Let
$T:\dom(T)\subseteq\RR^n\to\RR^n$ be an operator and let $C \subset \dom(T)$ be a nonempty closed and convex
set, the classical variational inequality problem is formulated as
  \begin{equation}\label{prob}
 \mbox{find} \ \ x_*\in C  \ \  \mbox{such that} \ \ \langle T(x_*), x-x_*\rangle\geq 0,  \ \  \forall \,  x\in C.
   \end{equation}
This problem unifies a broad range of optimization problems and serves as a useful computational framework in very diverse applications. Indeed, \eqref{prob} has been well studied and has numerous important applications in physics, engineering, economics and optimization theory, see, e.g., \cite{hart-stamp, stamp-kinder, 191} and the references therein.

It is well-known that \eqref{prob} is closely related with the so-called {\em dual} problem of the variational inequalities, written as
 \begin{equation}\label{dual}
  \mbox{find} \ \ x_*\in C  \ \  \mbox{such that} \ \   \langle T(x),x-x_*\rangle\ge 0, \ \    \ \  \forall \,  x\in C.
 \end{equation}
We denote the solution set of \eqref{prob} and \eqref{dual} by $S_*$ and $\Sdual$, respectively. Throughout, our standing assumptions are the following:
\begin{enumerate}[leftmargin=0.4in, label=({\bf A\arabic*})]
\item\label{a1} $T$ is continuous on $C$.

\item\label{a2} Problem \eqref{prob} has at-least one solution and all solutions of \eqref{prob} solve the dual problem (\ref{dual}).
\end{enumerate}
Note that assumption \ref{a1} implies $\Sdual\subseteq S_*$ (see Fact~\ref{by-cont} below). So, the existence of solutions of \eqref{dual} implies that of \eqref{prob}. However, the reverse assertion needs generalized monotonicity assumptions.
For example, if $T$ is pseudomonotone then $S_*\subseteq \Sdual$ (see \cite[Lemma~1]{konnov1}).
With this results, we note that \ref{a2} is strictly weaker than pseudomonotonicity of $T$ (see \cite[Example 1.1.3]{konnov-book} and Example~\ref{ex:1} below).
Moreover, the assumptions $S_*\neq \varnothing$ and the continuity of $T$ are natural and classical for most of methods that solve \eqref{prob} in the literature.
Assumption \ref{a2} has also been used in various algorithms for solving \eqref{prob} (see, e.g.,  \cite{konnov-97, konnov1}).

\subsection{Extragradient Algorithm}

Using projection-type algorithms is a popular approach for solving variational inequalities. Excellent surveys on this topic can be found in \cite{pang,konnov-book,harker}. One of the most studied algorithms is the so-called
\emph{extragradient algorithm}, which was first appeared in \cite{kor}. For solving \eqref{prob}, projection methods have to perform at least two projections onto the feasible region at each iteration, because the natural
extension of the projected gradient method
(just one projection when $T=\nabla f$) fails in general for monotone operators (see, e.g., \cite{yu-iu}).
Thus, an extra projection is necessary in order to establish the convergence. A general extragradient scheme can be formulated as follows.

\begin{center}
\fbox{
\begin{minipage}[b]{\textwidth}
\begin{alg}[Extragradient Algorithm]\label{Extragradient} Given $\alpha_k,\beta_k,\gamma_k>0$.
\begin{retraitsimple}
\item[] {\bf Step~0 (Initialization):} Take $x^0\in C$.

\item[] {\bf Step~1 (Iterative Step):} Compute
\begin{subequations}\label{iter1-0}
\begin{align}
z^k&=x^k-\beta_k T(x^k),\label{iter1}\\
y^k&=\alpha_k P_C(z^k) + (1-\alpha_k) x^k,\label{iter1-2}\\
\text{and}\quad x^{k+1}&=P_C\big(x^k-\gamma_k T(y^k)\big)\label{iter1-3}.
\end{align}
\end{subequations}
\item[]{\bf Step~2 (Stopping Test):} If $x^{k+1}=x^k$, then stop. Otherwise, set $k\leftarrow k+1$ and go to {\bf Step~1}.
\end{retraitsimple}
\end{alg}\end{minipage}
}
\end{center}
Next, we describe some strategies to choose the parameters $\alpha_k$, $\beta_k$ and $\gamma_k$ in \eqref{iter1-0} (see, e.g., \cite{pang,konnov-book}).

\noindent ({\bf a}) Constant stepsizes: For each $k$, take $\beta_k=\gamma_k$ where
$0<\check{\beta}\le\beta_k\le \hat{\beta}<+\infty$ and $\alpha_k=1$.

\noindent ({\bf b}) Armijo-type linesearch on the boundary of the feasible set: Set $\sigma>0$, and $\delta\in (0,1)$. For each $k$,
take $\alpha_k=1$ and $\beta_k=\sigma2^{-j(k)}$ where
\begin{equation}\label{E:j(k)}
\left\{\begin{aligned}
&j(k):=\min\left\{\,\,j\in \NN:\| T(x^k)-T(P_C(z^{k,j}))\|\le
\frac{\delta}{\sigma2^{-j}}\,\|x^k-P_C(z^{k,j})\|^2\,\,\right\},\\
&\text{and}\quad z^{k,j}=x^k-\sigma2^{-j} T(x^k).
\end{aligned}\right.
\end{equation}
In this approach, we
take  $\gamma_k=\beta_k$.

\noindent ({\bf c}) Armijo-type linesearch along the feasible direction: Set $\delta\in(0,1)$. For each $k$, take $0<\check{\beta}\le \beta_k\le \hat{\beta}<+\infty$, and $\alpha_k=2^{-\ell(k)}$ where
\begin{equation}\label{E:ell(k)}
\left\{\begin{aligned}
&\ell(k):=\min\left\{\ell\in \NN:\langle T(z^{k,\ell}
),x^k-P_C(z^k)\rangle \geq
\frac{\delta}{\beta_k}\|x^k-P_C(z^k)\|^2\right\},\\
&\text{and}\quad z^{k,\ell}=2^{-\ell}P_C(z^k)+(1-2^{-\ell})x^k.
\end{aligned}\right.
\end{equation}
Then, define 
$\gamma_k=\disp\frac{\langle T(y^k),x^k-y^k\rangle}{\|T(y^k)\|^2}$.

\noindent We provide several comments to explain the differences between these strategies.

Strategy ({\bf a}) was added to the extragradient algorithm in \cite{kor} and it is effective if $T$ is monotone and globally Lipschitz continuous.
The main difficulty of this strategy is the necessity of choosing $\beta_k$ in \eqref{iter1} satisfying $0<\beta_k\le \beta< 1/L$ where the possibly unknown $L$ is the Lipschitz constant of $T$; therefore, the stepsizes should be sufficiently small to ensures the convergence. 

Strategy ({\bf b}) was first studied in \cite{kho} under monotonicity and Lipschitz continuity of $T$. The Lipschitz continuity assumption was removed later
in \cite{IUSEM} by using feasible lineasearch. Note that this strategy requires computing the projection onto $C$ inside the inner loop of the Armijo-type linesearch \eqref{E:j(k)}.
Thus, the need to compute possible  many projections at each iteration $k$ makes Strategy ({\bf b}) inefficient when an explicit formula for $P_C$ is not available. 

Strategy ({\bf c}) was presented in \cite{IS} which demands only one projection for each outer step $k$. This approach guarantees convergence by assuming only the monotonicity of $T$ and the existence of solutions of \eqref{prob}, but not the Lipschitz continuity of $T$.

In Strategies ({\bf b}) and ({\bf c}), the operator $T$ and the projection $P_C$ are evaluated at least twice per iteration. The resulting algorithm is applicable to the whole class of monotone variational inequalities. It has the advantage of not requiring exogenous parameters.
Furthermore, both strategies occasionally allow
long stepsizes by exploiting the information
available at each iteration. 

Extragradient-type algorithms is
currently a subject of intense research (see, e.g.,
\cite{aus-teb, sol-sv-99, sol-tseng, gilabi-2, yu-iu,BI, yu-re-2}). Another variant of Strategy ({\bf c}) was presented in \cite{konnov-97} where the monotonicity was replaced by \ref{a2}. The main difference is that, instead of \eqref{E:ell(k)}, the scheme presented in \cite{konnov-97} performs
\begin{equation}\label{*}
\left\{\begin{aligned}
&\ell(k):=\min\left\{\ell\in \NN:\langle T(z^{k,\ell}),x^k-P_C(z^k)\rangle \geq \delta\langle T(
x^k),x^k-P_C(z^k)\rangle \right\},\\
&\text{and}\quad z^{k,\ell}=2^{-\ell}P_C(z^k)+(1-2^{-\ell})x^k,
\end{aligned}\right.
\end{equation} where $\delta\in(0,1)$.

\subsection{Proposed Schemes}

The paper studies two conceptual algorithms, each of which has three variants. Convergence analysis for both algorithms is established assuming weaker assumptions than previous work \cite{cond,yun-reinier-1}. 
Our scheme was inspired by {\bf Algorithm \ref{Extragradient}} and the conditional subgradient method which was studied in \cite{ds78} and further developed in \cite{cond, ds81}.

Basically, each conceptual algorithm contains a linesearch step and a projection step. First, the linesearch step allows to find a suitable halfspace separating the current iteration and the solution set. We will consider two different linesearches: one on the boundary of the feasible set and one along a feasible direction. Second, the projection step has three variants with different and interesting features on the generated sequence. We also note that some of the proposed variants are related to \cite{BI, IS, sol-sv-99}.
An essential characteristic of the conceptual algorithms is the convergence under very mild assumptions, like the continuity of the operator $T$ (see \ref{a1}),
the existence of solutions of \eqref{prob}, which also solve the dual variational inequality \eqref{dual} (see \ref{a2}).
We would like to emphasize that \ref{a2} is less restrictive than pseudomonotonicity of $T$ and plays a central role in our convergence analysis.

The remaining of the paper is organized as follows. Section~\ref{prel} provides notations and preliminary results,
in which we also prove the convergence of a natural extension of {\bf Algorithm~\ref{Extragradient}} with nonzero normal vectors. The convergence analysis of our conceptual algorithms together with two linesearches is given in Sections \ref{sec-4} and \ref{sec-5}. In Section \ref{s:expl}, we present an example showing that our suggested approach may perform better than previous classical variants. Finally, some concluding remarks are given in Section~\ref{sec-7}.

\section{Preliminaries}
\label{prel}
We begin with some basic notation and definitions, which are standard and follow \cite{librobauch}. Throughout, we write $p:=q$ to indicate that $p$ is defined by $q$. The inner product and the induced norm in $\RR^n$ are denoted respectively by $\langle\cdot,\cdot\rangle$ and
$\|\cdot\|$. We denote the nonnegative integers by $\NN:=\{0, 1, 2,\ldots\}$ and the extended-real line by $\overline{\RR} := \RR \cup\{+\infty\}$. The closed ball centered at $x\in\RR^n$ with radius $\rho>0$ will be denoted by
$\mathbb{B}[x,\rho]:=\{y\in\RR^n\colon\|y-x\|\leq\rho\}$. The domain of a function $f:\RR^n\rightarrow\overline{\RR}$ is
defined by $\dom(f):=\{x\in\RR^n : f(x)<+\infty\}$ and we say that $f$ is proper if $\dom(f)\neq \varnothing$. For any set $G$,
{\rm cl}(G) and ${\rm cone}(G)$ respectively denote the
topological closure and the conic hull of $G$. Finally, let $T: \RR^n\rightrightarrows\RR^n$ be an operator. Then, the domain and the graph of $T$ are given by
$\dom(T) := \{x\in \RR^n : T(x)\neq \varnothing\}$ and ${\rm Gph}(T):=\{(x,u)\in\RR^n\times\RR^n : u\in T(x)\}$.

\begin{definition}[normal cone]
Let $C$ be a subset of $\RR^n$ and let $x\in C$. A vector $u\in\RR^n$ is called a {\em normal} to $C$ at $x$
if for all $y\in C$, $\scal{u}{y-x}\leq 0$.
The collection of all such normal $u$ is called the \emph{normal cone} of $C$ at $x$ and is denoted by $\mcN_C(x)$. If $x\notin C$, we define $\mcN_C(x)=\varnothing$.
\end{definition}
In some special cases, formulas for normal cone can be obtained explicitly, for example, polyhedral sets \cite{cond}, closed convex cones \cite[Example~2.62]{bonnans}, sets defined by smooth functional constraints \cite[Theorem~6.14]{rock-VA} (see also \cite[Theorem~23.7]{rock-CA} and \cite[Proposition~2.61]{bonnans}).

The normal cone can be seen as an operator, i.e., $\mcN_C:
C\subset \RR^n\rightrightarrows\RR^n:x\mapsto
\mcN_C(x)$. Recall that the indicator function of $C$ is defined by $\delta_C(y):=0$, if $y\in C$ and $+\infty$, otherwise, and the classical convex subdifferential operator for a proper function $f:\RR^n\rightarrow\overline{\RR}$ is defined by $ \partial f:\RR^n\rightrightarrows\RR^n:
x\mapsto \partial f(x):=\left\{u\in \RR^n: f(y)\geq f(x)+\langle u,y-x\rangle ,\; \forall\, y\in \RR^n\right\}$.
Then, it is well-known that the normal cone operator can be expressed as $\mcN_C=\partial \delta_C$.

\begin{fact}{\rm(See~\cite[Proposition~4.2.1(ii)]{iusem-regina})}
\label{nor-cone}
The normal cone operator for $C$, $\mcN_C$, is a maximal
monotone operator and its graph, ${\rm Gph}(\mcN_C)$, is closed,
i.e., for every sequence $(x^k,u^k)_{k\in \NN}\subset
{\rm Gph}(\mcN_C)$ that converges to some $(x,u)$, we have $(x,u)\in
{\rm Gph}(\mcN_C)$.
\end{fact}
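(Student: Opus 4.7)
The plan is to exploit the identification $\mathcal{N}_C=\partial\delta_C$ noted just before the statement, and to treat the two claims separately.

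For the maximal monotonicity, I would invoke the classical theorem of Moreau--Rockafellar: the convex subdifferential of a proper, lower semicontinuous, convex function from $\mathbb{R}^n$ to $\overline{\mathbb{R}}$ is a maximal monotone operator. Since $C$ is nonempty, closed, and convex, the indicator function $\delta_C$ is proper (as $C\neq\varnothing$), convex, and lower semicontinuous (as $C$ is closed). Thus $\mathcal{N}_C=\partial\delta_C$ is maximal monotone. This disposes of the first assertion in one line, delegating the real work to a well-known reference such as Rockafellar's theorem.

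For the closed graph property, I would give a direct argument rather than deducing it from maximal monotonicity. Let $(x^k,u^k)_{k\in\mathbb{N}}\subset\mathrm{Gph}(\mathcal{N}_C)$ converge to $(x,u)$. By the definition of the normal cone, $x^k\in C$ for every $k$, and since $C$ is closed, $x\in C$. Moreover, for each fixed $y\in C$ we have $\langle u^k, y-x^k\rangle\leq 0$ for all $k$. Passing to the limit in this inequality using continuity of the inner product yields $\langle u, y-x\rangle\leq 0$. Since $y\in C$ was arbitrary, $u\in\mathcal{N}_C(x)$, hence $(x,u)\in\mathrm{Gph}(\mathcal{N}_C)$.

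I do not anticipate any real obstacle here: the first assertion is a textbook application of Rockafellar's theorem, and the second is a routine limit argument using only the definition of the normal cone and closedness of $C$. The only potential subtlety is to make explicit that the closed graph conclusion could alternatively be derived from maximal monotonicity (since every maximal monotone operator on a finite-dimensional space is outer semicontinuous), but the direct route above is shorter and self-contained.
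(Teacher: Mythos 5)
Your proposal is correct. Note, however, that the paper does not actually prove this fact: it simply cites Proposition~4.2.1(ii) of Burachik--Iusem, so there is no argument in the text to compare against line by line. Your two-part treatment is the standard one and fills that gap in a self-contained way. The first part (maximal monotonicity of $\partial\delta_C$ via the Moreau--Rockafellar theorem, using that $\delta_C$ is proper because $C\neq\emptyset$, convex because $C$ is convex, and lower semicontinuous because $C$ is closed) is exactly the textbook route and is what the cited reference amounts to. The second part is a clean direct verification: from $u^k\in\mcN_C(x^k)$ you correctly extract $x^k\in C$ (since $\mcN_C(x)=\emp$ for $x\notin C$ under the paper's definition), use closedness of $C$ to get $x\in C$, and pass to the limit in $\la u^k,y-x^k\ra\le 0$ for each fixed $y\in C$. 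This avoids invoking the general fact that maximal monotone operators in $\RR^n$ have closed graphs, which is a heavier tool than needed; your remark that the closedness could alternatively be deduced from maximal monotonicity is accurate but, as you say, unnecessary. No gaps.
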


Next, recall that the orthogonal projection of $x$ onto $C$, $P_C(x)$, is the unique point in $C$ such that $\| P_C(x)-x\| \le \|x-y\|$ for all $y\in C$. Some well-known facts about orthogonal projections are presented below.
\begin{fact}\label{proj}
For all $x,y\in \RR^n$ and all $z\in C $, the following hold:
\begin{enumerate}
\item\label{proj-i} $\|P_C(x)-P_C(y)\|^2 \leq \|x-y\|^2-\|(x-P_C(x))-\big(y-P_C(y)\big)\|^2$ (a.k.a. firm nonexpansiveness).
\item\label{proj-ii} $\langle x-P_C(x),z-P_C(x)\rangle \leq 0.$
\item\label{proj-iii} Let $x\in C$, $y\in\RR^n$ and $z=P_C(y)$, then $\langle x-y,x-z\rangle \geq \|x-z\|^2$.
\end{enumerate}
\end{fact}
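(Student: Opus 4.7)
The plan is to treat these as three classical inequalities from convex analysis and to derive them from the single variational characterization of the projection, namely item (ii), which I would prove first.

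For item (ii), I would start from the definition: $P_C(x)$ is the minimizer over $C$ of $y\mapsto \tfrac12\|x-y\|^2$. Given any $z\in C$, convexity of $C$ gives $(1-t)P_C(x)+tz\in C$ for all $t\in[0,1]$, so the scalar function $\varphi(t):=\tfrac12\|x-(1-t)P_C(x)-tz\|^2$ is minimized at $t=0$. Expanding $\varphi$ and computing $\varphi'(0)\ge 0$ yields $\langle x-P_C(x),z-P_C(x)\rangle\le 0$, which is (ii). This is the only genuinely ``new'' computation in the proof; everything else follows from applying (ii) algebraically.

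For item (i), I would set $u:=x-P_C(x)$ and $v:=y-P_C(y)$ and apply (ii) twice: to $x$ with $z=P_C(y)\in C$, and to $y$ with $z=P_C(x)\in C$. Adding the two resulting inequalities gives $\langle u-v,\, P_C(y)-P_C(x)\rangle\le 0$. Then I would write $x-y=(P_C(x)-P_C(y))+(u-v)$ and expand
\[
\|x-y\|^2=\|P_C(x)-P_C(y)\|^2+2\langle P_C(x)-P_C(y),u-v\rangle+\|u-v\|^2,
\]
and use the sign of the cross term to conclude (i).

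For item (iii), since $z=P_C(y)$ and $x\in C$, item (ii) applied at $y$ (with the role of the external point) gives $\langle y-z,\,x-z\rangle\le 0$. Then I would simply rewrite $\langle x-y,x-z\rangle=\langle (x-z)+(z-y),x-z\rangle=\|x-z\|^2-\langle y-z,x-z\rangle\ge\|x-z\|^2$, yielding (iii).

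I do not anticipate any real obstacle: the only step requiring an argument beyond algebraic rearrangement is (ii), and even there the key idea (first-order optimality along the segment $[P_C(x),z]$) is standard. Once (ii) is in hand, (i) and (iii) are immediate, so the proof amounts to writing the three deductions cleanly.
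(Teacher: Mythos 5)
Your proposal is correct; for item (iii) your computation $\la x-y,x-z\ra=\|x-z\|^2-\la y-z,x-z\ra\geq\|x-z\|^2$ is exactly the paper's one-line argument, and for items (i) and (ii) the paper simply cites Zarantonello's lemmas, whereas you supply the standard self-contained proofs (first-order optimality along the segment for (ii), then adding the two variational inequalities and expanding $\|x-y\|^2$ for (i)). Nothing is missing.
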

\proof
\ref{proj-i} \& \ref{proj-ii}: See \cite[Lemmas~1.1 and 1.2]{zarantonelo}.

\noindent \ref{proj-iii}: Using \ref{proj-ii}, we have $\scal{x-y}{x-z}=\scal{x-z}{x-z}+\scal{x-z}{z-y}\geq\|x-z\|^2$.
\endproof
\begin{corollary}\label{projP1}
For all $x,p\in \RR^n$ and $\alpha>0$, we have
\[\disp
\frac{x-P_C(x-\alpha p)}{\alpha}\in p+\mcN_C(P_C(x-\alpha
p)).\]
\end{corollary}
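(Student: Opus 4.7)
The plan is to reduce this to the characterizing inequality for the orthogonal projection, namely Fact \ref{proj}\ref{proj-ii}, and then recognize the resulting linear inequality as membership in the normal cone.

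Concretely, first I would set $z := P_C(x - \alpha p)$ and apply Fact \ref{proj}\ref{proj-ii} with the point being projected replaced by $x - \alpha p$ and $z$ playing the role of $P_C$ of that point. This gives
\[
\langle (x - \alpha p) - z, y - z \rangle \leq 0 \quad \text{for all } y \in C.
\]
Next, since $\alpha > 0$, I would divide through by $\alpha$ and rearrange to obtain
\[
\Big\langle \frac{x - z}{\alpha} - p,\; y - z \Big\rangle \leq 0 \quad \text{for all } y \in C.
\]
By the definition of the normal cone, this inequality is precisely the statement that $\frac{x-z}{\alpha} - p \in \mcN_C(z)$. Adding $p$ yields the claimed inclusion.

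There is essentially no obstacle here: once the substitution into Fact \ref{proj}\ref{proj-ii} is made, the conclusion is just the definition of $\mcN_C(z)$ written out. The only point to notice is that $z = P_C(x-\alpha p) \in C$, so $\mcN_C(z)$ is indeed nonempty and the statement is meaningful. The corollary is a standard reformulation of the projection characterization in the language of variational inclusions, and will be used later to identify iterates of the form $P_C(x - \alpha T(\cdot))$ with resolvent-type steps $(T + \mcN_C)^{-1}$.
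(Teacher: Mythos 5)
Your proof is correct and is essentially the paper's own argument: the paper's one-line proof sets $z=x-\alpha p$ and invokes $z-P_C(z)\in\mcN_C(P_C(z))$, which is exactly the inequality from Fact~\ref{proj}\ref{proj-ii} that you write out explicitly before dividing by $\alpha$ and rearranging. The only cosmetic difference is that the paper divides the normal vector by $\alpha$ using the cone property of $\mcN_C$, whereas you divide the defining inequality itself; these are the same step.
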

\proof
Let $z=x-\alpha p$, then the conclusion follows from $z-P_C(z)\in\mcN_C(P_C(z))$.
\endproof

Next, we present some lemmas that are useful in the sequel.
\begin{lemma}\label{setprop}
Let $H\subseteq \RR^n$ be a closed halfspace and $C\subseteq \RR^n$ such that $H\cap C\neq \emp$. Then, for every $x\in C$, we have $
P_{H\cap C}(x)=P_{H\cap C}(P_{H}(x))$.
\end{lemma}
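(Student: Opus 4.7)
The plan is to reduce to the nontrivial case $x \notin H$, then to prove that both $z := P_{H \cap C}(x)$ and $z^{\sharp} := P_{H\cap C}(P_H(x))$ lie on the boundary of $H$, at which point a Pythagorean identity combined with uniqueness of the projection of $P_H(x)$ onto $H \cap C$ finishes the argument.

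If $x \in H$ then $P_H(x) = x$ and there is nothing to prove. So assume $x \notin H$ and fix a representation $H = \{w \in \RR^n : \la a, w\ra \le b\}$ with $\|a\| = 1$; writing $y := P_H(x) = x - \mu a$ with $\mu := \la a, x\ra - b > 0$, the task becomes to show $z = z^{\sharp}$.

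Next I would establish $z \in \bdry H$ and $z^{\sharp} \in \bdry H$. For $z$: suppose $\la a, z\ra < b$. Since $x \in C$ and $z \in C$, convexity gives $[z, x] \subseteq C$, and for $t > 0$ small enough the point $z + t(x - z)$ still lies in $H$, hence in $H \cap C$, while being strictly closer to $x$ than $z$ is --- contradicting $z = P_{H \cap C}(x)$. For $z^{\sharp}$: assume $\la a, z^{\sharp}\ra < b$, and consider the perturbation $z^{\sharp} + t(x - z^{\sharp})$, which lies in $C$ by convexity and in $H$ for all $t \in [0, (b - \la a, z^{\sharp}\ra)/(\mu + b - \la a, z^{\sharp}\ra)]$. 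A direct expansion gives
\begin{equation*}
\la y - z^{\sharp}, x - z^{\sharp}\ra = \|y - z^{\sharp}\|^2 + \mu\bigl(b - \la a, z^{\sharp}\ra\bigr),
\end{equation*}
which is strictly positive under the assumption $\la a, z^{\sharp}\ra < b$; so for $t > 0$ small the perturbation lies in $H \cap C$ and is strictly closer to $y$ than $z^{\sharp}$, contradicting $z^{\sharp} = P_{H \cap C}(y)$.

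With $z, z^{\sharp} \in \bdry H$, each of the vectors $y - z$ and $y - z^{\sharp}$ is orthogonal to $a$ (since both endpoints lie on the hyperplane $\la a,\cdot\ra = b$), so Pythagoras yields $\|x - z\|^2 = \mu^2 + \|y - z\|^2$ and $\|x - z^{\sharp}\|^2 = \mu^2 + \|y - z^{\sharp}\|^2$. Combining $\|x - z\| \le \|x - z^{\sharp}\|$ (from $z = P_{H\cap C}(x)$) with $\|y - z^{\sharp}\| \le \|y - z\|$ (from $z^{\sharp} = P_{H\cap C}(y)$) forces $\|y - z\| = \|y - z^{\sharp}\|$, and uniqueness of the projection of $y$ onto the closed convex set $H \cap C$ gives $z = z^{\sharp}$. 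The main obstacle is the boundary containment $z^{\sharp} \in \bdry H$: this is precisely where the hypothesis $x \in C$ is used essentially, and a counterexample with $x \notin C$ shows that the identity can fail in general.
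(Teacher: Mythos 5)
Your proof is correct, and while it rests on the same geometric mechanism as the paper's, the execution is genuinely different. The paper also reduces to $x\notin H$ and exploits the Pythagorean identity $\|x-w\|^2=\|x-P_H(x)\|^2+\|P_H(x)-w\|^2$ for $w\in\bdry H$, which converts ``closest to $x$'' into ``closest to $P_H(x)$'' among boundary points; but it then compares $P_{H\cap C}(x)$ against an arbitrary $y\in C\cap H$ via an auxiliary point $\tilde x\in C\cap\bdry H$ chosen on the segment $[y,x]$, aiming at the inequality $\|y-P_H(x)\|\ge\|P_{H\cap C}(x)-P_H(x)\|$ for all such $y$. You instead prove that \emph{both} projections $z=P_{H\cap C}(x)$ and $z^{\sharp}=P_{H\cap C}(P_H(x))$ lie on $\bdry H$, and then close with the sandwich $\|y-z^{\sharp}\|\le\|y-z\|\le\|y-z^{\sharp}\|$ and uniqueness of the projection onto the closed convex set $H\cap C$. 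Your route buys rigor precisely where the paper is terse: the paper's asserted orthogonality $(P_{H\cap C}(x)-P_H(x))\perp(x-P_H(x))$ is equivalent to $P_{H\cap C}(x)\in\bdry H$, which it never justifies but you establish by the segment-perturbation argument; and your identity $\la y-z^{\sharp},x-z^{\sharp}\ra=\|y-z^{\sharp}\|^2+\mu\big(b-\la a,z^{\sharp}\ra\big)$ cleanly supplies the descent direction needed for $z^{\sharp}\in\bdry H$, a point the paper's quantified-over-$y$ argument sidesteps entirely (and its displayed chain of inequalities, starting from $\|y-P_H(x)\|^2\ge\|\tilde x-x\|^2$, is not correct as written). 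The paper's argument is shorter on the page; yours is self-contained and every step is checkable. Both arguments, like the rest of the paper, implicitly require $C$ to be closed and convex so that the projections are single-valued.
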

\proof
If $x\in H$, then $x=P_{H\cap C}(x)=P_{H\cap C}(P_{H}(x))$. Suppose that $x\notin H$. Fix any $y\in C\cap H$.
Since $x\in C$ but $x\notin H$, there exists $\gamma \in [0,1)$, such that $\tx=\gamma x+(1-\gamma)y\in C\cap \bdry H$,
where $\bdry H$ is the hyperplane boundary of $H$. Hence, $(\tx-P_H(x))\bot( x-P_H(x))$ and $(P_{H\cap C}(x)-P_H(x))\bot( x-P_H(x))$, then
\begin{equation}\label{P1}
\|\tx-x\|^2
=\|\tx-P_H(x)\|^2+\|x-P_H(x)\|^2,
\end{equation}
and
\begin{equation}\label{P2}
\|P_{H\cap C}(x)-x\|^2
=\|P_{H\cap C}(x)-P_H(x)\|^2+\|x-P_H(x)\|^2,
\end{equation}
respectively. Using \eqref{P1} and \eqref{P2}, we get
\begin{align*}
&\|y-P_{H}(x)\|^2\geq\|\tx-x\|^2
=\|\tx-P_H(x)\|^2+\|P_H(x)-x\|^2
\geq\|\tx-P_{H}(x)\|^2.\\
&=\|\tilde{x}-x\|^2-\|x-P_{H}(x)\|^2
\geq\, \|P_{H\cap C}(x)-x\|^2-\|x-P_{H}(x)\|^2
= \, \|P_{H\cap C}(x)-P_{H}(x)\|^2.
\end{align*}
So,
$\|y-P_{H}(x)\|\geq \|P_{H\cap C}(x)-P_{H}(x)\|$ for all $y\in C\cap H$. Thus, $P_{H\cap C}(x)=P_{C\cap
H}(P_{H}(x))$.
\endproof

\begin{lemma}\label{l:lim-2} Let $S$ be a nonempty, closed and convex set. Let $x^0,x\in\RR^n$. Assume that $x^0\notin S$ and that
$S\subseteq W(x)= \{y\in \RR^n :\langle y-x,x^0-x\rangle \le0\}$. Then, $
x\in B[\tfrac{1}{2}(x^0+\ox),\tfrac{1}{2}\rho]$, where
$\ox=P_{S}(x^0)$ and $\rho={\rm dist}(x^0, S):=\|x_0-P_{S}(x_0)\|$.
\end{lemma}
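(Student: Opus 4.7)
The plan is to reduce the claimed ball inclusion to a single inner-product inequality and then invoke the hypothesis $S\subseteq W(x)$ applied at the specific point $\bar x=P_S(x^0)\in S$.

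Concretely, setting $m=\tfrac12(x^0+\bar x)$ and $r=\tfrac12\rho=\tfrac12\|x^0-\bar x\|$, the conclusion $x\in B[m,r]$ is equivalent to $\|x-m\|^2\leq r^2$. I would write $x-m=\tfrac12(x-x^0)+\tfrac12(x-\bar x)$ and $x^0-\bar x=(x^0-x)+(x-\bar x)$, expand the squared norms, and collect terms. A direct (and essentially forced) computation yields the clean identity
\begin{equation*}
\|x-m\|^2-r^2=\langle x-x^0,\,x-\bar x\rangle.
\end{equation*}
This is the crux: once this identity is in hand, the inclusion reduces to the single scalar inequality $\langle x-x^0,x-\bar x\rangle\leq 0$, equivalently $\langle \bar x-x,\,x^0-x\rangle\leq 0$.

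To finish, I would observe that this last inequality is exactly the statement ``$\bar x\in W(x)$''. Since $\bar x=P_S(x^0)\in S$ and by hypothesis $S\subseteq W(x)$, this membership is automatic, completing the proof. Note that $x^0\notin S$ is only used to guarantee $\rho>0$ so that $r$ makes sense and the ball is non-degenerate; the algebraic argument itself does not require it.

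The main (minor) obstacle is just carrying out the expansion carefully enough to recognize the cancellation that produces the identity above; everything else is bookkeeping. No deeper tool, such as Fact~\ref{proj} or Lemma~\ref{setprop}, is needed here — the result is purely a consequence of the midpoint/parallelogram algebra together with the defining inequality of the halfspace $W(x)$.
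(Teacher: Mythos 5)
Your proof is correct and is essentially the paper's own argument: the identity $\|x-m\|^2-r^2=\langle x-x^0,x-\bar x\rangle=\langle \bar x-x,x^0-x\rangle$ is exactly the midpoint decomposition the paper computes (with $v=m$ and $r=\tfrac12(x^0-\bar x)$), and both proofs conclude by noting $\bar x=P_S(x^0)\in S\subseteq W(x)$. No differences worth flagging.
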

\proof Since $S$ is convex and closed, $\ox=P_{S}(x^0)$ and
$\rho={\rm dist}(x^0,S)$ are well-defined. $S \subseteq W(x)$
implies that $\ox=P_{S}(x^0)\in W(x)$. Define
$v:=\tfrac{1}{2}(x_0+\ox)$ and $r:=x^0-v=\tfrac{1}{2}(x^0-\ox)$,
then $\ox-v=-r$ and
$\|r\|=\tfrac{1}{2}\|x^0-\ox\|=\tfrac{1}{2}\rho$. It follows that
\begin{align*}
0&\geq\langle \ox-x,x^0-x\rangle =\scal{\ox-v+v-x}{x^0-v+v-x}\\
&=\scal{-r+(v-x)}{r+(v-x)}=\|v-x\|^2-\|r\|^2.
\end{align*}
So, $x\in B[v,r]$ and the proof is complete.
\endproof

\begin{definition}[Fej\'er convergence]
Let $S$ be a nonempty subset of $\RR^n$. A sequence $(x^k)_{k\in\NN}\subset \RR^n$ is said to be Fej\'er convergent to $S$ if and only if for all
$x\in S$ there exists $k_0\in \NN$ such that
$\|x^{k+1}-x\| \le \|x^k - x\|$ for all $k\ge k_0$.
\end{definition}
Fej\'er convergence was introduced in \cite{browder} and has been elaborated further in \cite{IST,borw-baus}. The following are useful properties of Fej\'er sequences.
\begin{fact}\label{punto}
If $(x^k)_{k\in \NN}$ is Fej\'er convergent to $S$, then the following hold
\begin{enumerate}
\item\label{punto-i} The sequence $(x^k)_{k\in \NN}$ is bounded.
\item\label{punto-ii} The sequence $\big (\|x^k-x\|\big)_{k \in \NN}$ converges for all $x\in S$.
\item\label{punto-iii} If an accumulation point $x_{*}$ belongs to $S$, then the sequence $(x^k)_{k\in \NN}$ converges to $x_{*}$.
\end{enumerate}
\end{fact}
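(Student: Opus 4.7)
The plan is to verify the three assertions in order, using only the defining property of Fejér convergence and basic facts about real sequences.

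For (i), I would fix any $x \in S$ (which exists since $S$ is nonempty by assumption implicit in the definition, or else the statement is vacuous) and invoke the Fejér property to obtain $k_0 \in \NN$ with $\|x^{k+1}-x\| \le \|x^k-x\|$ for all $k \ge k_0$. Iterating, $\|x^k - x\| \le \|x^{k_0} - x\|$ for every $k \ge k_0$, so the tail of the sequence lies in the closed ball $\mathbb{B}[x, \|x^{k_0}-x\|]$. Since the initial block $\{x^0, \dots, x^{k_0-1}\}$ is a finite set, the whole sequence $(x^k)_{\kkk}$ is bounded.

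For (ii), fix $x \in S$ and set $a_k := \|x^k - x\|$. By the Fejér property, $(a_k)$ is eventually nonincreasing, and it is bounded below by $0$. Therefore it converges in $\RR$; this is the desired conclusion.

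For (iii), suppose $x_* \in S$ is an accumulation point, so there is a subsequence $(x^{k_j})_{j\in\NN}$ with $x^{k_j} \to x_*$. Applying (ii) with the choice $x = x_*$, the full sequence $(\|x^k - x_*\|)_{\kkk}$ converges to some limit $\ell \ge 0$. Since the subsequence $\|x^{k_j} - x_*\| \to 0$, we must have $\ell = 0$, hence $x^k \to x_*$.

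The argument is almost entirely bookkeeping; the only point that requires care is honoring the slightly nonstandard definition used here, in which the monotonicity of $\|x^k - x\|$ is required only for $k \ge k_0(x)$ rather than for all $k$. This weaker definition still yields each conclusion because (i) finite initial segments are always bounded, (ii) eventual monotonicity of a bounded-below real sequence still forces convergence, and (iii) the argument in (iii) only uses the limit of $(\|x^k - x_*\|)$ guaranteed by (ii). I do not anticipate any genuine obstacle.
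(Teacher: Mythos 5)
Your proof is correct. The paper itself does not argue these claims; it simply cites \cite[Proposition~5.4 and Theorem~5.5]{librobauch}, and the elementary argument you give is exactly the standard one behind those references: a tail bounded by $\|x^{k_0}-x\|$ plus a finite initial block for (i), eventual monotonicity together with the lower bound $0$ for (ii), and the subsequence forcing the limit in (ii) to be $0$ for (iii). You are also right to flag, and you correctly handle, the paper's slightly weaker definition in which the inequality $\|x^{k+1}-x\|\le\|x^k-x\|$ is only required for $k\ge k_0(x)$; none of the three conclusions is affected by this relaxation.
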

\proof
\ref{punto-i} and \ref{punto-ii}: See \cite[Proposition~5.4]{librobauch}. \ref{punto-iii}: See \cite[Theorem~5.5]{librobauch}.
\endproof

\noindent We recall the following well-known characterization of $S_*$ which will be used repeatedly.
\begin{fact}
{\rm(See~\cite[Proposition 1.5.8]{pang})}
\label{f:0822b}
The following are equivalent:
\begin{enumerate}
\item $x\in S_*$.
\item $-T(x)\in\mcN_C(x)$.
\item For all $\beta>0$, we have $x=P_C(x-\beta T(x))$.
\end{enumerate}
\end{fact}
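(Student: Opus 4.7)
The plan is to establish the chain (i) $\Leftrightarrow$ (ii) $\Leftrightarrow$ (iii), each step being essentially a direct translation through a definition or a known property of the projection already recorded in the preliminaries.

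First I would handle (i) $\Leftrightarrow$ (ii). Unpacking the definition of $S_*$ in \eqref{prob}, $x \in S_*$ says $x \in C$ and $\la T(x), y - x\ra \geq 0$ for every $y \in C$. Multiplying by $-1$, this is the same as $\la -T(x), y - x\ra \leq 0$ for every $y \in C$, which is precisely the definition of $-T(x) \in \mcN_C(x)$. So this equivalence is immediate from the two definitions, noting that membership in $C$ is implicit in both statements (since $\mcN_C(x) = \emp$ when $x \notin C$).

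Next I would handle (ii) $\Rightarrow$ (iii). Assume $-T(x) \in \mcN_C(x)$. Fix any $\beta > 0$. Since $\mcN_C(x)$ is a cone, $-\beta T(x) \in \mcN_C(x)$, i.e.\ $(x - \beta T(x)) - x \in \mcN_C(x)$. By the standard characterization of the projection (the identity $z - P_C(z) \in \mcN_C(P_C(z))$ used already in Corollary \ref{projP1}, combined with $x \in C$ and Fact \ref{proj}\ref{proj-ii}), this forces $x = P_C(x - \beta T(x))$.

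Finally, for (iii) $\Rightarrow$ (ii), suppose $x = P_C(x - \beta T(x))$ for some $\beta > 0$. Applying Corollary \ref{projP1} with $p = T(x)$ and $\alpha = \beta$ gives
\[
\frac{x - P_C(x - \beta T(x))}{\beta} \in T(x) + \mcN_C\bigl(P_C(x - \beta T(x))\bigr).
\]
The left-hand side is $0$ and the projection equals $x$, so $0 \in T(x) + \mcN_C(x)$, i.e.\ $-T(x) \in \mcN_C(x)$.

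No step looks genuinely hard: the only minor subtlety is being careful that the cone property of $\mcN_C(x)$ is used to absorb the $\beta$, and that $x \in C$ is built into each of the three formulations (from the definition of $S_*$, from the convention $\mcN_C(x) = \emp$ for $x \notin C$, and from $x$ being in the range of $P_C$ respectively), so the equivalences never silently drop that requirement.
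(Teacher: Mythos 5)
Your proposal is correct. Note, however, that the paper does not actually prove this fact: it simply cites Proposition~1.5.8 of Facchinei--Pang. Your argument supplies the standard elementary proof using only the paper's own preliminaries (the definition of $\mcN_C$, Corollary~\ref{projP1}, and the variational characterization of $P_C$), and the chain (i)~$\Leftrightarrow$~(ii)~$\Leftrightarrow$~(iii) is exactly the decomposition one would expect. The only point worth tightening is in (ii)~$\Rightarrow$~(iii): Fact~\ref{proj}\ref{proj-ii} records only the \emph{necessary} condition $\la w-P_C(w), y-P_C(w)\ra\le 0$ for the projection, whereas what you need is the converse --- that a point $z\in C$ satisfying $\la w-z,y-z\ra\le 0$ for all $y\in C$ must equal $P_C(w)$. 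That converse is standard and follows in one line (for any $y\in C$, $\|w-y\|^2=\|w-z\|^2+2\la w-z,z-y\ra+\|z-y\|^2\ge\|w-z\|^2$, so $z$ minimizes the distance), but since the paper never states it explicitly you should either include that line or cite the full if-and-only-if characterization rather than the one-directional Fact~\ref{proj}\ref{proj-ii}. With that small addition the proof is complete and self-contained.
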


\begin{proposition}\label{parada}
	Given $T: \dom(T)\subseteq \RR^n\to \RR^n$ and $\alpha>0$. If
	$x=P_C(x-\alpha (T(x)+ u))$ for some $u\in \mcN_C(x)$,
	then $x\in S_*$, or equivalently, $x=P_C(x-\beta T(x))$ for all $\beta>0$.
\end{proposition}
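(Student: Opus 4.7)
The plan is to use the characterization of the normal cone operator via projections (Corollary~\ref{projP1}) together with the fact that $\mcN_C(x)$ is a convex cone, and then invoke Fact~\ref{f:0822b}.

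First, I would apply Corollary~\ref{projP1} with $p := T(x)+\alpha u$ to the given identity $x = P_C\bigl(x-\alpha(T(x)+\alpha u)\bigr)$. Since $x - P_C(x-\alpha p) = 0$, the corollary yields
\[
0 \;\in\; (T(x)+\alpha u) + \mcN_C(x),
\]
that is, $-(T(x)+\alpha u) \in \mcN_C(x)$.

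Next, I would exploit the conical structure of $\mcN_C(x)$. Because $u \in \mcN_C(x)$ and $\alpha > 0$, the vector $\alpha u$ lies in $\mcN_C(x)$; and because $\mcN_C(x)$ is a convex cone (in particular closed under addition), summing gives
\[
-T(x) \;=\; -(T(x)+\alpha u) \;+\; \alpha u \;\in\; \mcN_C(x).
\]
Finally, by Fact~\ref{f:0822b}, the inclusion $-T(x) \in \mcN_C(x)$ is equivalent to $x \in S_*$ and also equivalent to $x = P_C(x - \beta T(x))$ for every $\beta > 0$, which is exactly the conclusion.

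There is no real obstacle: the argument reduces to recognizing that the extra additive term $\alpha u$ inside the projection, when combined with the normal cone inclusion it produces, can be cancelled because both $u$ and $-(T(x)+\alpha u)$ already lie in the same convex cone $\mcN_C(x)$. The only subtlety worth stating explicitly is that the normal cone at $x$ is a convex cone, so $\mcN_C(x)+\mcN_C(x)\subseteq \mcN_C(x)$; everything else is a direct application of the two cited results.
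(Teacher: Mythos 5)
Your proof is correct and follows essentially the same route as the paper: apply Corollary~\ref{projP1} to obtain $0\in T(x)+\alpha u+\mcN_C(x)$, cancel the $\alpha u$ term using the cone structure of $\mcN_C(x)$ to conclude $-T(x)\in\mcN_C(x)$, and finish with Fact~\ref{f:0822b}. The only difference is that you make explicit the additivity of the convex cone $\mcN_C(x)$, which the paper leaves implicit.
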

\proof
It follows from Corollary \ref{projP1} that $0\in T(x)+ u+
\mcN_C(x),$ which implies that $-T(x)\in \mcN_C(x)$. The conclusion is now immediate from Fact~\ref{f:0822b}.
\endproof
\begin{remark}
It is quite easy to see that the reverse of Proposition~\ref{parada} is not true in general.
\end{remark}
The next result will be used to prove that all accumulation  points of the sequences generated by the proposed algorithms belong to the solution set of problem \eqref{prob}.
\begin{fact}
{\rm(See~\cite[Lemma~3]{yun-iusem-2012})}
\label{by-cont}
If $T:\dom(T)\subseteq\RR^n\rightarrow\RR^n$ is continuous, then
$\Sdual\subseteq S_*$.
\end{fact}

\begin{lemma}\label{propseq}
For any $(z,v)\in {\rm Gph}(\mcN_C)$ define $H(z,v) := \big\{ y\in \RR^n : \la
T(z)+v,y-z\rangle \le 0\big \}$.
Then, $S_*= \Sdual\subseteq H(z,v)$.
\end{lemma}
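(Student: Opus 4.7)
The plan is to prove the two assertions separately, both of which should be short.

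First, the equality $S_* = S_0$. Lemma \ref{by-cont} already gives $S_0 \subseteq S_*$ using the continuity assumption \ref{a1}. The reverse inclusion $S_* \subseteq S_0$ is precisely the content of assumption \ref{a2}: every primal solution is also a dual solution. So the equality is immediate from the standing hypotheses and a result already proved.

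Next, the inclusion $S_0 \subseteq H(z,v)$. Fix $x_* \in S_0$ and $(z,v) \in \operatorname{Gph}(\mathcal{N}_C)$. Since $v \in \mathcal{N}_C(z)$, in particular $z \in C$, so I can plug $x := z$ into the definition of $S_0$ to get
\[
\langle T(z), z - x_* \rangle \geq 0, \qquad \text{i.e.,} \qquad \langle T(z), x_* - z \rangle \leq 0.
\]
On the other hand, $x_* \in S_0 \subseteq C$ and $v \in \mathcal{N}_C(z)$ yield, directly from the definition of the normal cone,
\[
\langle v, x_* - z \rangle \leq 0.
\]
Adding the two inequalities gives $\langle T(z) + v, x_* - z \rangle \leq 0$, which is exactly the condition $x_* \in H(z,v)$.

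There is no real obstacle here: the whole lemma just repackages \ref{a1}, \ref{a2}, Lemma \ref{by-cont}, and the defining inequalities of $S_0$ and $\mathcal{N}_C$. The only point to be careful about is to check $z \in C$ (which is built into the assumption $(z,v) \in \operatorname{Gph}(\mathcal{N}_C)$, since we defined $\mathcal{N}_C(x) = \varnothing$ when $x \notin C$), so that $z$ is a legitimate test point in the definition of $S_0$.
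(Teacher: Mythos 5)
Your proof is correct and follows essentially the same route as the paper: the equality $S_*=S_0$ from \ref{a2} together with Lemma \ref{by-cont}, and the inclusion by summing $\la T(z),x_*-z\ra\le 0$ (from the dual inequality tested at $z$) with $\la v,x_*-z\ra\le 0$ (from $v\in\mcN_C(z)$). The extra remark that $z\in C$ is a legitimate test point is a small but welcome clarification the paper leaves implicit.
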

\proof $S_*=\Sdual$ by Assumption \ref{a2} and Fact~\ref{by-cont}.
Take $x_{*}\in \Sdual$, then  $\langle T(z),x_*-z\rangle \leq 0$  for all $z\in
C$. Since $(z,v)\in {\rm Gph}(\mcN_C)$, we have $\langle v,
x_{*}-z\rangle \le 0$. Summing up these inequalities, we get $\langle T(z)+v, x_{*}-z\rangle \le 0$.
Then, $x_{*}\in H(z,v)$.
\endproof

In view of Lemma~\ref{propseq}, Assumptions \ref{a1} and \ref{a2} imply that $\Sdual=S_*$. Hence, the next result is immediate.

\begin{lemma}\label{sol-convex-closed} If
$T:\dom(T)\subseteq\RR^n\rightarrow\RR^n$ is continuous and Assumption {\rm \ref{a2}} holds, then $S_*$ is a closed and convex set.
\end{lemma}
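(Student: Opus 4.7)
The plan is to exploit the identification $S_* = S_0$ and then express $S_0$ as an intersection of closed convex sets.

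First, I would establish that $S_* = S_0$ under the given hypotheses. The inclusion $S_0 \subseteq S_*$ follows directly from Lemma \ref{by-cont}, which uses continuity of $T$. The reverse inclusion $S_* \subseteq S_0$ is exactly the content of Assumption \ref{a2}. Note that this equality is already observed in the proof of Lemma \ref{propseq}, so I can simply invoke it.

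Next, I would rewrite the dual solution set as an intersection:
\begin{equation*}
S_0 = C \cap \bigcap_{y \in C} \big\{ x \in \RR^n : \langle T(y), y - x \rangle \geq 0 \big\}.
\end{equation*}
For each fixed $y \in C$, the set $\{ x \in \RR^n : \langle T(y), y-x\rangle \geq 0 \}$ is a closed halfspace (or all of $\RR^n$ if $T(y)=0$), hence closed and convex. Since $C$ is closed and convex by hypothesis, $S_0$ is an intersection of closed convex sets and therefore closed and convex itself. Combined with $S_* = S_0$, this yields the conclusion.

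There is no real obstacle here: the essential work has already been done in Lemma \ref{by-cont} and is encoded in Assumption \ref{a2}. The only subtlety worth flagging is that continuity of $T$ is used \emph{only} to obtain $S_0 \subseteq S_*$; the convexity and closedness of $S_*$ then come for free from the dual characterization, which is linear in $x$ for each fixed test point $y$.
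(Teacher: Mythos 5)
Your proposal is correct and follows exactly the route the paper intends: the text preceding the lemma notes that it is a direct consequence of $S_*=S_0$ (from Assumption \ref{a2} and Lemma \ref{by-cont}) together with the definition of $S_0$, which is precisely your representation of $S_0$ as the intersection of $C$ with closed halfspaces indexed by $y\in C$. The paper leaves the details implicit, so your write-up is simply a fleshed-out version of the same argument.
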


\subsection{Extragradient Algorithm with Normal Vectors}

We now show that it is possible to incorporate normal vectors of the feasible sets into the extragradient algorithm. As we will see below, this approach generalizes {\bf Algorithm~\ref{Extragradient}} with Strategy ({\bf a}). To proceed, we assume that $T$ is Lipschitz with constant $L$ and \ref{a2} holds.
\vspace*{-.15in}
\begin{center}\fbox{\begin{minipage}[b]{\textwidth}
\begin{alg}[Extragradient
Algorithm with Normal Vectors]\label{Cond-Ext} Take $(\beta_k)_{k\in
\NN}\subset[\check{\beta},\hat{\beta}]$ such that
$0<\check{\beta}\le \hat{\beta}<1/(L+1)$ and $\delta\in (0,1)$.
\begin{retraitsimple}
\item[] {\bf Step~0 (Initialization):} Take $x^0\in C$ and set $k\leftarrow0$.

\item[] {\bf Step~1 (Stopping Test):} If $x^k=P_C(x^k-\beta_k T(x^k))$, then stop. Otherwise:

\item[] {\bf Step~2 (First Projection):} Take $u^k\in \mcN_C(x^k)$ such that
\begin{align}
\|u^k\|&\leq \delta \|x^k-P_C(x^k-\beta_k (T(x^k)+u^k))\|,\label{uk-rk}\\
z^k&=P_C(x^k-\beta_k( T(x^k)+u^k)).\label{iter1-C}
\end{align}
\item[] {\bf Step~3 (Second Projection):} Take $v^k\in \mcN_C(z^k)$
such that
\begin{equation}\label{vk-uk}
\|v^k-u^k\|\le \|x^k-z^k\|.
\end{equation}
Set
\begin{equation}\label{iter3-C}
x^{k+1}=P_C(x^k-\beta_k( T(z^k)+ v^k)).
\end{equation}
Set $k\leftarrow k+1$ and go to {\bf Step~1}.
\end{retraitsimple}
\end{alg}\end{minipage}}\end{center}

\begin{proposition}\label{ukyvk-ok}
{\bf Algorithm \ref{Cond-Ext}} is well-defined.
\end{proposition}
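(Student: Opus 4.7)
The plan is to verify that whenever the algorithm reaches Step 2 (i.e.\ the stopping test at Step 1 fails), the vectors $u^k$ satisfying \eqref{uk-rk} and $v^k$ satisfying \eqref{vk-uk} exist, so that $z^k$ and $x^{k+1}$ in \eqref{iter1-C} and \eqref{iter3-C} are unambiguously defined (the projections themselves are always well-defined as $C$ is nonempty, closed and convex). The strategy is to exhibit the trivial choice $u^k = 0$ and $v^k = 0$ explicitly; this demonstrates the feasibility of the constraints without forcing any particular selection rule.

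First I would handle $u^k$. Since $x^k\in C$, one has $0\in\mcN_C(x^k)$. Passing the stopping test at Step 1 means $x^k\neq P_C(x^k-\beta_k T(x^k))$, so with the candidate $u^k=0$ the left-hand side of \eqref{uk-rk} equals $0$, while the right-hand side equals $\delta\|x^k-P_C(x^k-\beta_k T(x^k))\|>0$. Thus \eqref{uk-rk} is satisfied and the algorithm can form $z^k$ according to \eqref{iter1-C}.

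Next I would handle $v^k$. Since $z^k\in C$ (being a projection onto $C$), one has $0\in\mcN_C(z^k)$. Taking $v^k=0$ gives $\|v^k-u^k\|=\|u^k\|$. Combining \eqref{uk-rk} with $\delta\in(0,1)$ and the definition of $z^k$ in \eqref{iter1-C},
\begin{equation*}
\|u^k\| \,\le\, \delta\,\|x^k-P_C(x^k-\beta_k(T(x^k)+u^k))\|
\,=\, \delta\,\|x^k-z^k\| \,<\, \|x^k-z^k\|,
\end{equation*}
so \eqref{vk-uk} holds with $v^k=0$, and $x^{k+1}$ in \eqref{iter3-C} is then well-defined.

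I do not expect any real obstacle here; the only subtlety is to observe that one \emph{must} have already checked the Step 1 stopping test before invoking \eqref{uk-rk}, because the right-hand side of \eqref{uk-rk} with $u^k=0$ must be strictly positive for the inequality to be informative (otherwise $x^k\in S_*$ by Fact~\ref{f:0822b} and the algorithm has terminated). One could also remark, using Proposition~\ref{parada}, that whenever the algorithm has not terminated at Step~1 we automatically have $z^k\neq x^k$, which is the only place the proof implicitly relies on nontriviality of the iterates.
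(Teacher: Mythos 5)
Your proof is correct, but it takes a genuinely different route from the paper's. You establish well-definedness by exhibiting the trivial feasible selection $u^k=0$ (and then $v^k=0$, correctly keeping $u^k$ general in that second step so the argument covers any admissible $u^k$); this is the most economical way to prove the bare statement, and your closing remarks correctly identify the only subtlety, namely that the Step~1 test must have failed for the argument to be non-vacuous and that this forces $z^k\neq x^k$. The paper instead argues by contradiction with a scaling parameter: it supposes that \eqref{uk-rk} (respectively \eqref{vk-uk}) fails for \emph{every} positive multiple $\alpha u^k$ of a prescribed normal direction, lets $\alpha\to 0$, and derives $x^k=P_C(x^k-\beta_k T(x^k))$ (respectively $x^k=z^k$, whence $x^k\in S_*$ via Proposition~\ref{parada}), contradicting the failure of the stopping test. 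The difference matters for what the proof buys: your argument shows only that \emph{some} admissible choice exists (the zero vector), whereas the paper's argument shows that for \emph{any} chosen directions $u^k\in\mcN_C(x^k)$ and $v^k\in\mcN_C(z^k)$, all sufficiently small positive multiples satisfy \eqref{uk-rk} and \eqref{vk-uk}. Since the entire point of the paper is that \emph{non-null} normal vectors can and should be selected, the paper's stronger conclusion is the one that actually licenses the algorithm's intended use; your proof, while valid for the proposition as literally stated, would leave open whether any nonzero selection is ever admissible.
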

\proof
It is sufficient to prove that if Step~1 is not satisfied, i.e.,
\begin{equation}\label{e:0822a}
\|x^k-P_C(x^k-\beta_k T(x^k))\|>0.
\end{equation}
then Steps~2 and 3 are attainable.

\noindent {\em Step~2 is attainable:} Suppose that \eqref{uk-rk} does not hold for every  $\alpha u^k\in
\mcN_C(x^k)$ with $\alpha>0$, i.e., $\|\alpha u^k\|> \delta \|x^k-P_C(x^k-\beta_k (T(x^k)+\alpha
u^k))\|\ge 0.$
 Taking limit when $\alpha$ goes to $0$, we get
$\|x^k-P_C(x^k-\beta_k T(x^k))\|=0$, which contradicts \eqref{e:0822a}.

\noindent {\em Step~3 is attainable:} Suppose that \eqref{vk-uk} does not hold for every $\alpha v^k\in \mcN_C(z^k)$ with $\alpha>0$, i.e., $ \|\alpha v^k-u^k\|> \|x^k-z^k\|,$
where $z^k=P_C(x^k-\beta_k(T(x^k)+u^k))$ as \eqref{iter1-C} and $u^k\in \mcN_C(x^k)$ satisfying \eqref{uk-rk}.
Letting $\alpha$ goes to $0$ and using \eqref{uk-rk}, we get
$
\|x^k-z^k\|\le \|u^k\|\le \delta \|x^k-z^k\|.
$
So, $x^k=z^k$. Then, Proposition \ref{parada} implies a contradiction to \eqref{e:0822a}.
\endproof



It is immediate from Proposition~\ref{parada} that if the Stopping Test is satisfied for $x_k$, then $x^k\in S_*$. So we investigate the remaining case that the Stopping Test is not satisfied for all $x^k$. In this case, we will prove that the algorithm generates an infinite sequence $(x^k)_\kkk$ that converges to $S_*$.

\begin{lemma}\label{des-fejer-C}
Suppose that $T$ is Lipschitz continuous with constant $L$. Let
$x_*\in S_*$. Suppose also that Stopping Test is not satisfied for $x^k$. Then Step~4 generates $x^{k+1}$ and that
\begin{equation*}
\|x^{k+1}-x_*\|^2\le
\|x^{k}-x_*\|^2-(1-\beta_k^2(L+1)^2)\|z^k-x^k\|^2.
\end{equation*}
\end{lemma}
\proof Define $w^k= x^k-\beta_k( T(z^k)+ v^k)$ with $v^k\in
\mcN_C(z^k)$ taken from Step~3. Then, using \eqref{iter3-C} and applying Proposition~\ref{proj}(i), with $x=w^k$ and $y=x_*$, we get
\begin{align}\label{ine-1}\nonumber
\|x^{k+1}-x_*\|^2
&\leq \|w^k-x_*\|^2-\|w^k-P_C(w^k)\|^2\\\nonumber
&\leq \|x^k-x_*-\beta_k( T(z^k)+
v^k)\|^2-\|x^k-x^{k+1}-\beta_k(
T(z^k)+ v^k)\|^2\\
&= \|x^k-x_*\|^2-\|x^k-x^{k+1}\|^2+2\beta_k\langle T(z^k)+ v^k,
x_*-x^{k+1} \ra.
\end{align}
Since $v^k\in \mcN_C(z^k)$ and \ref{a2}, we have
\begin{align*}
\langle T(z^k)+ v^k, x_*-x^{k+1} \rangle =& \langle T(z^k)+ v^k, z^k-x^{k+1} \ra
+\langle T(z^k)+ v^k, x_*-z^k\rangle \\\le &\langle T(z^k)+ v^k, z^k-x^{k+1}
\ra+\langle T(z^k), x_*-z^k\rangle \\ \le &\langle T(z^k)+ v^k, z^k-x^{k+1}\ra.
\end{align*}
Substituting into \eqref{ine-1} yields
\begin{align}\label{ine-2}\nonumber
\|x^{k+1}-x_*\|^2 \le&  \|x^k-x_*\|^2-\|x^k-x^{k+1}\|^2-2\beta_k\la
T(z^k)+ v^k, x^{k+1}-z^k\rangle \\
=& \|x^k-x_*\|^2-\|x^k-z^k\|^2-\|z^k-x^{k+1}\|^2\nonumber \\+&2\la
x^k-\beta_k(T(z^k)+ v^k)-z^k, x^{k+1}-z^k\ra.
\end{align} Define $\ox^k=x^k-\beta_k(T(x^k)+u^k)$ with $u^k\in\mcN_C(x^k)$ taken from Step~2 and recall that $z^k=P_C(\bar{x}^k)$ and that
$x^{k+1}=P_C(w^k)=P_C(x^k-\beta_k(T(z^k)+
v^k))$, we have
\begin{align}\label{ine-3}\nonumber
2 \langle x^k-&\beta_k(T(z^k)+ v^k)-z^k, x^{k+1}-z^k\rangle \\
\nonumber &=2 \langle w^k-P_C(\ox^k), P_C(w^k)-P_C(\ox^k)\rangle \\
\nonumber &=2\la
\ox^k-P_C(\ox^k), P_C(w^k)-P_C(\ox^k)\rangle + 2\la
w^k-\ox^k, P_C(w^k)-P_C(\ox^k)\rangle \\\nonumber &\le 2 \la
w^k-\ox^k, P_C(w^k)-P_C(\ox^k)\rangle \\\nonumber &= 2 \la
w^k-\ox^k, x^{k+1}-z^k\rangle = 2 \beta_k \la
(T(x^k)+u^k)-(T(z^k)+v^k), x^{k+1}-z^k\rangle \\\nonumber &\le
2\beta_k\left(\|T(z^k)-T(x^k)\|+\|v^k-u^k\|\right)\|x^{k+1}-z^k\|\\&\le
2\beta_k (L+1)\|z^k-x^k\|\|x^{k+1}-z^k\|\le \beta_k^2
(L+1)^2\|z^k-x^k\|^2 + \|x^{k+1}-z^k\|^2,
\end{align}
using Proposition \ref{proj}(ii), with $x=x^k-\beta_k(T(x^k)+ u^k)$
and $z=x^{k+1}$, in the first inequality, the Cauchy-Schwarz
inequality in the second one and the Lipschitz continuity of $T$ and
\eqref{vk-uk} in the third one. Finally, the conclusion follows from \eqref{ine-3} and
\eqref{ine-2}.
\endproof

\begin{corollary}\label{fejer-C}
The sequence $(x^k)_{k\in \NN}$ is Fej\'er convergent to $S_*$ and $\disp\lim_{k\rightarrow\infty} \|z^k-x^k\|=0$.
\end{corollary}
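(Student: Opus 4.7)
The plan is to read everything off Lemma~\ref{des-fejer-C}. Recall the stepsize condition $\beta_k \le \hat\beta < 1/(L+1)$, which gives the uniform bound
\[
\eta := 1 - \hat\beta^2(L+1)^2 > 0,
\]
and hence $1 - \beta_k^2(L+1)^2 \ge \eta > 0$ for every $k$.

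First I would establish Fejér convergence. Fix any $x_*\in S_*$. Lemma~\ref{des-fejer-C} gives
\[
\|x^{k+1}-x_*\|^2 \le \|x^k-x_*\|^2 - \eta\,\|z^k-x^k\|^2 \le \|x^k-x_*\|^2,
\]
so $(\|x^k-x_*\|)_{k\in\NN}$ is nonincreasing, which is the Fejér property.

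Next, for the asymptotic vanishing of $\|z^k-x^k\|$, I would telescope the same inequality. For any $N\in\NN$, summing from $k=0$ to $N$ yields
\[
\eta \sum_{k=0}^{N} \|z^k-x^k\|^2 \le \|x^0-x_*\|^2 - \|x^{N+1}-x_*\|^2 \le \|x^0-x_*\|^2.
\]
Letting $N\to\infty$ shows $\sum_{k=0}^{\infty}\|z^k-x^k\|^2 < \infty$, so $\|z^k-x^k\| \to 0$.

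There is no real obstacle here: the whole work has already been done in Lemma~\ref{des-fejer-C}, and the only new ingredient is the uniform positivity of $1-\beta_k^2(L+1)^2$ that the stepsize restriction $\hat\beta < 1/(L+1)$ was designed to provide. The one point to flag is that $S_*$ is nonempty by \ref{a2}, so the choice of $x_*$ at the start is legitimate.
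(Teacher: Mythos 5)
Your proof is correct and follows essentially the same route as the paper: both arguments read everything off Lemma~\ref{des-fejer-C} together with the stepsize bound $\hat\beta<1/(L+1)$, which makes $1-\beta_k^2(L+1)^2$ uniformly positive. The only cosmetic difference is that you telescope the inequality to get $\sum_k\|z^k-x^k\|^2<\infty$, whereas the paper invokes Fact~\ref{punto}\ref{punto-ii} (convergence of $\|x^k-x_*\|$) to conclude $\|z^k-x^k\|\to 0$; both are standard and equivalent here, and your version even corrects the paper's typo $(1-\hat\beta^2L^2)$ to the intended $(1-\hat\beta^2(L+1)^2)$.
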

\proof
It follows from Lemma~\ref{des-fejer-C} and
$\beta_k\le \hat{\beta}< 1/(L+1)$ that
\begin{equation*}\label{des-Lip-1}
\|x^{k+1}-x_*\|^2\leq \|x^{k}-x_*\|^2-(1-\hat{\beta}^2L^2)\|z^k-x^k\|^2\leq \|x^{k}-x_*\|^2.
\end{equation*}
So, $(x^k)_\kkk$ is Fej\'er convergent to $S_*$. Now Fact~\ref{punto}(ii) together with the above inequality imply $\disp\lim_{k\rightarrow\infty} \|z^k-x^k\|=0$.
\endproof

\begin{proposition}
The sequence $(x^k)_{k\in \NN}$ converges to a point in $S_*$.
\end{proposition}
\proof
The sequence $(x^k)_{k\in \NN}$  is bounded by Lemma
\ref{des-fejer-C} and Fact~\ref{punto}(i). Let $\tilde{x}$ be an accumulation point of some subsequence $(x^{i_k})_{k\in\NN}$.
By Corollary~\ref{fejer-C}, $\tilde{x}$ is also an accumulation point of $(z^{i_k})_{k\in\NN}$. Without loss of generality,
we suppose that the corresponding parameters $(\beta_{i_k})_{k\in\NN}$ and $(u^{i_k})_{k\in\NN}$ converge to $\tilde{\beta}$ and $\tilde{u}$,
respectively. Since $z^k=P_C(x^k-\beta_k( T(x^k)+u^k))$, taking the limit along
the subsequence $(i_k)_{k\in\NN}$, we obtain
$
\tilde{x}=P_C(\tilde{x}-\tilde{\beta}( T(\tilde{x})+\tilde{u})).
$
Therefore, Fact \ref{nor-cone} and Proposition \ref{parada} imply $\tilde{x}\in S_*$. Finally, we apply Fact~\ref{punto}\ref{punto-iii}.
\endproof

\section{Conceptual Algorithm with Linesearch \ref{boundary}}\label{sec-4}

In this section, we study a conceptual algorithm, in which we use a linesearch along the boundary of the feasible set to obtain the stepsizes. Indeed, {\bf Linesearch~\ref{boundary}} given below generalizes Strategies ({\bf b}) by involving normal vectors to feasible sets.
\vspace*{-.15in}
\begin{center}\fbox{\begin{minipage}[b]{\textwidth}
\begin{linesr}{B}
{\rm(Linesearch on the boundary)}
\label{boundary}

\medskip
{\bf Input:} $(x,u,\sigma,\delta,M)$. Where $x\in C$, $u\in \mcN_C(x)$, $\sigma>0$, $\dd\in(0,1)$, and $M>0$.

Set $\alpha=\sigma$ and $\theta\in (0,1)$ and choose $u\in \mcN_C(x)$. Denote $z_{\alpha}=P_C(x-\alpha(T(x)+\alpha u))$ and choose
$v_\alpha\in\mcN_C(z_\alpha)$ with $\|v_\alpha\|\leq M$.

\begin{retraitsimple}
\item[] {\bf While} $\alpha\|T(z_{\alpha})-T(x)+\alpha v_{\alpha}-\alpha u\| > \delta\|z_{\alpha}-x\|$   {\bf do}

$\alpha\leftarrow\theta \alpha$\quad and\quad choose any $v_{\alpha}\in\mcN_C(z_{\alpha})$ with $\|v_\alpha\|\leq M$.

\item[] {\bf End While}
\end{retraitsimple}
{\bf Output:} $(\alpha,z_\alpha,v_{\alpha})$.
\end{linesr}\end{minipage}}\end{center}
We now show that {\bf Linesearch
\ref{boundary}} is well-defined assuming only \ref{a1}, i.e., continuity of $T$.
\begin{lemma}\label{boundary-well} If $x\in C$ and $x\notin S_*$, then {\bf Linesearch \ref{boundary}} stops after finitely many steps.
\end{lemma}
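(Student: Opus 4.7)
The plan is to argue by contradiction. Suppose the linesearch fails to terminate. Then for every $j\in\NN$, writing $\alpha_j := \sigma\theta^j$, $z_j := P_C(x-\alpha_j(T(x)+\alpha_j u))$, and letting $v_j\in\mcN_C(z_j)$ with $\|v_j\|\le M$ denote the selection made at the $j$-th inner step, the test condition holds:
\begin{equation*}
\alpha_j\|T(z_j)-T(x)+\alpha_j v_j-\alpha_j u\| > \delta\|z_j-x\|.
\end{equation*}
First I would observe that $z_j\to x$: since $x\in C$ so $P_C(x)=x$, nonexpansiveness of $P_C$ yields $\|z_j-x\|\le \alpha_j\|T(x)+\alpha_j u\|\to 0$.

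Dividing the failure inequality through by $\alpha_j>0$ gives
\begin{equation*}
\|T(z_j)-T(x)+\alpha_j v_j-\alpha_j u\| > \delta\,\frac{\|z_j-x\|}{\alpha_j}.
\end{equation*}
By continuity of $T$ (assumption \ref{a1}) we have $T(z_j)\to T(x)$, while $\|v_j\|\le M$ and $\alpha_j\to 0$ force $\alpha_j v_j\to 0$ and $\alpha_j u\to 0$. Thus the left-hand side tends to $0$, and therefore $(x-z_j)/\alpha_j\to 0$ as well.

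Next I would invoke Corollary \ref{projP1} with $p:=T(x)+\alpha_j u$, which furnishes an element $\eta_j\in\mcN_C(z_j)$ given by
\begin{equation*}
\eta_j \;=\; \frac{x-z_j}{\alpha_j}-T(x)-\alpha_j u.
\end{equation*}
Letting $j\to\infty$, the previous step gives $\eta_j\to -T(x)$, while $z_j\to x$. Because the graph of $\mcN_C$ is closed (Fact \ref{nor-cone}), this passage to the limit yields $-T(x)\in\mcN_C(x)$, and then Fact \ref{f:0822b} gives $x\in S_*$, contradicting the hypothesis $x\notin S_*$.

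The main obstacle I anticipate is that both sides of the failure inequality vanish as $\alpha_j\to 0$, so the contradiction cannot come from a crude comparison. The decisive feature is the asymmetric factor of $\alpha_j$ in the stopping test: after dividing by $\alpha_j$, the left-hand side is reduced to quantities that continuity of $T$ and boundedness of $v_j$ force to zero, while the right-hand side keeps precisely the ratio $\|z_j-x\|/\alpha_j$ needed to identify, via Corollary \ref{projP1}, a normal-cone sequence converging to $-T(x)$. The closedness of $\gr(\mcN_C)$ is exactly what converts this limit into the conclusion $x\in S_*$.
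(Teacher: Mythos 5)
Your proposal is correct and follows essentially the same route as the paper's proof: divide the failure inequality by $\alpha$, use continuity of $T$ and boundedness of $v_\alpha$ to force $\|x-z_\alpha\|/\alpha\to 0$, then apply Corollary~\ref{projP1} and the closedness of ${\rm Gph}(\mcN_C)$ to conclude $-T(x)\in\mcN_C(x)$, contradicting $x\notin S_*$. Your version is in fact slightly tidier, since you note the full limit (rather than a liminf) and write the normal-cone element $\eta_j$ explicitly.
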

\proof
Suppose on the contrary that {\bf Linesearch~\ref{boundary}} does not stop for all
$\alpha\in \mcP:=\{\sigma, \sigma\theta, \sigma\theta^2, \ldots \}$ and the chosen vectors
\begin{equation}
v_{\alpha}\in\mcN_C(z_\alpha),\quad \|v_\alpha\|\leq M,\quad
z_\alpha=P_C(x-\alpha (T(x)+\alpha u)).\label{vz_alpha*}
\end{equation}
We have
\begin{equation}\label{no-armijo}
\alpha\|T(z_\alpha)-T(x)+\alpha v_{\alpha}-\alpha u\|
>\delta \|z_\alpha-x\|.
\end{equation}
Next, divide both sides of \eqref{no-armijo} by $\alpha>0$ and let $\alpha$ goes to $0$. Due to the boundedness of
$(v_\alpha)_{\alpha\in\mcP}$ and the continuity of $T$, we obtain
\begin{equation*}
0=\liminf_{\alpha \to 0} \|T(z_\alpha)-T(x)+\alpha v_{\alpha}-\alpha
u\|\ge\liminf_{\alpha \to 0} \frac{\|x-z_\alpha\|}{\alpha}\ge 0.
\end{equation*}
Using $z_\alpha$ in \eqref{vz_alpha*}, we have
\begin{equation}\label{solP1}
\liminf_{\alpha \to 0}\frac{\|x- P_C(x-\alpha (T(x)+\alpha u))\|}{\alpha}= 0.
\end{equation}
On the other hand, Corollary~\ref{projP1} implies
\[\frac{x- P_C(x-\alpha (T(x)+
\alpha u))}{\alpha}\in T(x)+\alpha u+\mcN_C( P_C(x-\alpha
(T(x)+\alpha u))).\]
From  \eqref{solP1}, the continuity of the projection and the closedness of ${\rm Gph}(\mcN_C)$ imply $0\in T(x)+\mcN_C(x)$, which is a contradiction
since $x \notin S_*$.
\endproof

Next, we present the conceptual algorithm, which is related to {\bf Algorithm~\ref{Extragradient}} with Strategy ({\bf b}) when nonzero normal vectors are used. Here, we assume that \ref{a1} and \ref{a2} hold.
\vspace*{-.15in}
\begin{center}
\fbox{\begin{minipage}[b]{\textwidth}
\begin{Calg}{B}
\label{A2} Given  $\sigma>0$, $\delta\in(0,1)$, and $M>0$.

\item[ ]{\bf Step~0 (Initialization):} Take $x^0\in C$ and set $k\leftarrow 0$.

\item[ ]{\bf Step~1 (Stopping Test):}
If $x^k=P_C(x^k-T(x^k))$, then stop. Otherwise,

\item[ ]{\bf Step~2 (Linesearch \ref{boundary}):} Take $u^k\in \mcN_C(x^k)$ with $\|u^k\|\leq M$
and set
$$(\alpha_k, z^k, v^k)= {\bf Linesearch \; \ref{boundary}}\;(x^k, u^k, \sigma,\delta,M),$$
i.e., $(\alpha_k,z^k,v^k)$ satisfies
\begin{subequations}\label{zk212}
\begin{align}
& v^k\in \mcN_C(z^k) \ \mbox{with}\ \|v^k\|\leq M,\quad \alpha_k\leq\sigma,\label{zk212-a}\\
& z^k=P_C(x^{k}-\alpha_k(T(x^{k})+\alpha_k u^k)),\label{zk212-b}\\
&\alpha_k\|T(z^k)-T(x^k)+\alpha_k(v^k-u^k)\|\leq \delta\|z^k-x^k\|.\label{zk212-c}
\end{align}
\end{subequations}
\item[ ]{\bf Step~3 (Projection):} Set \
${\ov}^k:=\alpha_k v^k$ \ and \ $x^{k+1}:=\mcF_B(x^k)$.

\item[ ]{\bf Step~4:} Set $k\leftarrow k+1$ and go to {\bf Step~1}.
\end{Calg}\end{minipage}}\end{center}

\noindent We consider three variants of $\mcF_B$ in Step~3:
\begin{align}
\mcF_{\rm B.1}(x^k) =& P_C\big(P_{H(z^k,\ov^k)}(x^k)\big),\label{P112}  \quad   &{(\bf Variant\; B.1)} \\
\mcF_{\rm B.2}(x^k) =& P_{C\cap H(z^k,{\ov}^k)}(x^k),\label{P122}  \quad   &{(\bf Variant\; B.2)}\\
\mcF_{\rm B.3}(x^k) =& P_{C\cap H(z^k,{\ov}^k)\cap
W(x^k)}(x^0),\label{P132}  \quad   & {(\bf Variant\; B.3)}
\end{align} where
\begin{subequations}\label{e:HW}
\begin{align}
H(z^k,\ov^k)&:=\big\{ y\in \RR^n : \langle T(z^k)+\ov^k,y-z^k\rangle \le 0\big \},\label{H(x,v)}\\
\text{and}\qquad
W(x^k)&:=\big\{ y\in \RR^n : \langle y-x^k,x^0-x^k\rangle \le 0\big \}.\label{W(x)}
\end{align}
\end{subequations}
These halfspaces have been widely used in the literature, see, e.g., \cite{yuniusem,sva,yun-reinier-1} and the references therein.
Our goal is to analyze the convergence of these variants. First, we start by showing that the algorithm is well-defined.
\begin{proposition}\label{propdef*}
Assume that $\mcF_B(x^k)$ is well-defined whenever $x^k$ is available. Then, {\bf Conceptual Algorithm \ref{A2}} is also well-defined.
\end{proposition}
\proof
If the Stopping Test is not satisfied, then Step~2 is attainable by Lemma~\ref{boundary-well}. So the algorithm is well-defined.
\endproof

\begin{proposition}\label{H-separa-x}
$x^k\in S_*$ if and only if $x^k \in H(z^k,{\ov}^k)$, where $z^k$ and ${\ov}^k$ are obtained in Steps~2 and~3, respectively.
\end{proposition}
\proof
If $x^k \in S_*$, then $x^k \in
H(z^k,{\ov}^k)$ by Lemma \ref{propseq}. Now suppose that $x^k\notin S_*$. Define $\bar{u}^k=\alpha_k u^k\in
\mcN_C(x^k)$ and $w^k=x^k-\alpha_k( T(x^k)+\bar{u}^k)$. Then,
\begin{align}\label{paralim}\nonumber
\alpha_k\langle T(z^k)+{\ov}^k,x^k-z^k \ra
&=\alpha_k\langle T(z^k)-T(x^k)+{\ov}^k-\bar{u}^k, x^k-z^k\ra
+\alpha_k \langle T(x^k)+\bar{u}^k,x^k -z^k\rangle \nonumber\\
&=\alpha_k\langle T(z^k)-T(x^k)+{\ov}^k-\bar{u}^k, x^k-z^k\ra
+\langle x^k-w^k,x^k -z^k\rangle \nonumber\\
&\geq
-\alpha_k\|T(z^k)-T(x^k)+{\ov}^k-\bar{u}^k\|\cdot\| x^k-z^k\|+\|x^k-z^k\|^2  \nonumber \\
&\geq-\dd\|x^k-z^k\|^2+\|x^k-z^k\|^2=(1-\delta)\|x^k-z^k \|^2>0,
\end{align}
where we have used {\bf Linesearch \ref{boundary}} and Fact~\ref{proj}\ref{proj-iii} in the second inequality. It follows that $x^k\notin H(z^k,{\ov}^k)$ by the definition of $H(z^k,{\ov}^k)$.
\endproof

Let $(x^k)_{k\in \NN}$, $(z^k)_{k\in \NN}$ and $(\alpha_k)_{k\in \NN}$ be sequences generated by {\bf Conceptual Algorithm \ref{A2}} and suppose that $x^k\notin S_*$. Using \eqref{paralim}, we obtain a useful algebraic property
\begin{equation}\label{d:useful1}
\forall k\in\NN:\quad\langle T(z^{k})+{\ov}^{k},x^{k}-z^{k} \rangle  \geq
\frac{(1-\delta)}{\alpha_k}\|x^k-z^k\|^2.
\end{equation}

\begin{proposition}\label{p:aBdry_xk1_neq_xk}
If Stopping Test is not satisfied at $x^k$, then {\bf Conceptual Algorithm~\ref{A2}} generates $x^{k+1}\neq x^k$.
\end{proposition}
\proof
Suppose on the contrary that $x^{k+1}=x^k$. Consider three cases.

If {\bf Variant~B.1} is used, then $x^{k+1}=P_C\big(P_{H(z^k,{\ov}^k)}(x^k)\big)=x^k$. Then Fact~\ref{proj}\ref{proj-ii} implies
\begin{equation}\label{proyex*}
\langle P_{H(z^k,{\ov}^k)}(x^k)-x^{k}, z-x^{k}\rangle =
\langle P_{H(z^k,{\ov}^k)}(x^k)-x^{k+1}, z-x^{k+1}\rangle \leq 0,
\end{equation} for all $z\in C$. Using again Fact~\ref{proj}(ii),
\begin{equation}\label{proyeh*}
\forall z\in H(z^k,{\ov}^k):\quad\langle P_{H(z^k,{\ov}^k)}(x^k)-x^k, P_{H(z^k,{\ov}^k)}(x^k)-z\ra
\leq 0.
\end{equation}
Note that $z^k\in C\cap H(z^k,{\ov}^k) \neq \emp$. So, setting $z=z^k$ and summing up \eqref{proyex*} and \eqref{proyeh*}, we obtain $\|x^k-P_{H(z^k,{\ov}^k)}(x^k)\|^2=0$.
Hence, $x^k=P_{H(z^k,{\ov}^k)}(x^k)$, i.e., $x^k\in
H(z^k,{\ov}^k)$. 

If {\bf Variant~B.2} is used, then $x^{k+1}=P_{C\cap H(z^k,{\ov}^k)}(x^k)=x^k$. So $x^k\in H(z^k,{\ov}^k)$.

If {\bf Variant~B.3} is used, then $x^{k+1}=P_{C\cap H(z^k,{\ov}^k)\cap W(x^k)}(x^0)=x^k$. So $x^k\in  H(z^k,{\ov}^k)$.

Hence, in all cases, we have showed that $x^k\in H(z^k,{\ov}^k)$, which implies $x^k\in S_*$ by Proposition~\ref{H-separa-x}. By Fact~\ref{f:0822b}, we get $x^k=P_C(x^k-T(x^k))$, i.e., Stopping Test is satisfied at $x^k$, a contradiction.
\endproof

In view of Proposition~\ref{p:aBdry_xk1_neq_xk}, we will only examine the case that Stopping Test is not satisfied for all $x^k$. In this case, {\bf Conceptual Algorithm~\ref{A2}} generates an infinite sequence $(x^k)_{k\in \NN}$ such that $x^k\notin S_*$ for all $\kkk$.

\subsection{Convergence Analysis of Variant B.1}\label{sec-5.1}

We consider the case {\bf Variant
B.1} is used and the algorithm generates an infinite sequence $(x^k)_\kkk$ such that $x^k\notin S_*$ for all $\kkk$. Note that by Lemma~\ref{propseq},
$H(z^k,{\ov}^k)$ is nonempty for all $k$. Thus, the projection step \eqref{P112} is well-defined, so is the whole algorithm.

\begin{proposition}\label{prop2}
The following hold:
\begin{enumerate}
\item\label{prop2-i} The sequence $(x^k)_{k\in \NN}$ is Fej\'er convergent to $S_*$.
\item\label{prop2-ii} The sequence $(x^k)_{k\in \NN}$ is bounded.
\item\label{prop2-iii} $\disp\lim_{k\to \infty}\langle T(z^k)+{\ov}^k,x^k-z^k \rangle =0$.
\end{enumerate}
\end{proposition}
\proof
\ref{prop2-i}: Take $x_{*}\in S_*$. Note that, by definition $(z^k,{\ov}^k) \in {\rm Gph}(\mcN_C)$. Using \eqref{P112},
Fact~\ref{proj}(i) and Lemma \ref{propseq}, we have
\begin{equation}\label{fejer-des}
\begin{aligned}
\|x^{k+1}-x_{*}\|^2&=\|P_{C}(P_{H(z^k,{\ov}^k)}(x^k))-P_{C}(P_{H(z^k,{\ov}^k)}(x_{*}))\|^2\\
&\leq
\|P_{H(z^k,{\ov}^k)}(x^k)-P_{H(z^k,{\ov}^k)}(x_{*})\|^2\\
&\leq \,
\|x^k-x_{*}\|^2-\|P_{H(z^k,{\ov}^k)}(x^k)-x^k\|^2
\leq\|x^k-x_{*}\|^2.
\end{aligned}
\end{equation}

\noindent \ref{prop2-ii}: Follows from \ref{prop2-i} and Fact~\ref{punto}\ref{punto-i}.

\noindent \ref{prop2-iii}: Take $x_{*} \in S_*$ and notice that $
P_{H(z^k,{\ov}^k)}(x^k)=x^k-\disp\frac{\big\la
T(z^k)+{\ov}^k,x^k-z^k\big\ra}{\|T(z^k)+{\ov}^k\|^2}
\big{(}T(z^k)+{\ov}^k\big{)}$. Then \eqref{fejer-des} yields
\begin{align}\label{ineq*}
\|x^{k+1}-x_{*}\|^2 \leq& \, \|x^k-x_{*}\|^2
-\left\|x^k-\frac{\big\la
T(z^k)+{\ov}^k,x^k-z^k\big\ra}{\|T(z^k)+{\ov}^k\|^2}
\big(T(z^k)+{\ov}^k\big)-x^k\right\|^2 \nonumber\\
=& \,\|x^k-x_{*}\|^2-\frac{(\la
T(z^k)+{\ov}^k,x^k-z^k\ra)^2}{\|T(z^k)+{\ov}^k\|^2}.\nonumber
\end{align}
It follows that
\begin{equation}\label{cero*}
\frac{\la
T(z^k)+{\ov}^k,x^k-z^k\ra^2}{\|T(z^k)+{\ov}^k\|^2} \leq
\|x^{k}-x_{*}\|^2- \|x^{k+1}-x_{*}\|^2.
\end{equation}
 Since  $T$ and the projection are continuous  and $(x^k)_{k\in \NN}$ is bounded, $(z^k)_{k\in \NN}$ is bounded.
 The boundedness of $\big(\|T(z^k)+{\ov}^k\|\big)_{k\in \NN}$ follows from \eqref{zk212}. Using Fact~\ref{punto}(ii),
 the right hand side of (\ref{cero*}) goes to $0$, when $k$ goes to $\infty$. Then, the result follows.
\endproof

Next we establish the main convergence result for {\bf Variant B.1}.
\begin{theorem}\label{teo1}
The sequence $(x^k)_{k\in \NN}$ converges to a point in $S_*$.
\end{theorem}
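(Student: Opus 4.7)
The plan is to leverage the Fej\'er property already established in Proposition~\ref{prop2}\ref{prop2-i} together with Fact~\ref{punto}\ref{punto-iii}: it suffices to exhibit a single accumulation point of $(x^k)_{k\in\NN}$ that lies in $S_*$. Since the sequence is bounded by Proposition~\ref{prop2}\ref{prop2-ii}, pick a subsequence $(x^{i_k})_{k\in\NN}$ converging to some $\tx\in\RR^n$. Because $C$ is closed and each $x^k\in C$, we automatically have $\tx\in C$. The goal is to prove $\tx\in S_*$.

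First, I would deduce that $z^{i_k}\to\tx$ as well. Combining \eqref{d:useful1} with the uniform bound $\alpha_k\le\sigma$ gives an estimate of the form
\begin{equation*}
\la T(z^k)+\ov^k,x^k-z^k\ra \ \ge\ \frac{(1-\delta)\delta}{\sigma}\,\|x^k-z^k\|^2,
\end{equation*}
so that Proposition~\ref{prop2}\ref{prop2-iii} forces $\|x^k-z^k\|\to 0$, yielding $z^{i_k}\to\tx$.

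Next, I would split into two cases according to the behaviour of the stepsizes $(\alpha_{i_k})$.
\emph{Case 1: $\liminf_k\alpha_{i_k}>0$.} Passing to a further subsequence (re-indexed the same way), we may assume $\alpha_{i_k}\to\tilde\alpha>0$ and, using $\|u^{i_k}\|\le M$, also $u^{i_k}\to\tilde u$. The closedness of $\gr(\mcN_C)$ in Fact~\ref{nor-cone} combined with $x^{i_k}\to\tx$ shows $\tilde u\in\mcN_C(\tx)$. Taking limits in $z^{i_k}=P_C(x^{i_k}-\alpha_{i_k}(T(x^{i_k})+\alpha_{i_k}u^{i_k}))$ using the continuity of $T$ and of $P_C$, together with $z^{i_k}\to\tx$, yields $\tx=P_C(\tx-\tilde\alpha(T(\tx)+\tilde\alpha\tilde u))$. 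Proposition~\ref{parada} then gives $\tx\in S_*$.

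\emph{Case 2: $\alpha_{i_k}\to 0$ along a subsequence.} Since each $\alpha_{i_k}<\sigma$ for large $k$, the linesearch failed at the preceding trial value $\alpha_{i_k}/\theta$, so there exist $\hat z^{i_k}=P_C(x^{i_k}-(\alpha_{i_k}/\theta)(T(x^{i_k})+(\alpha_{i_k}/\theta)u^{i_k}))$ and $\hat v^{i_k}\in\mcN_C(\hat z^{i_k})$ with $\|\hat v^{i_k}\|\le M$ satisfying
\begin{equation*}
\frac{\alpha_{i_k}}{\theta}\,\bigl\|T(\hat z^{i_k})-T(x^{i_k})+\tfrac{\alpha_{i_k}}{\theta}(\hat v^{i_k}-u^{i_k})\bigr\| \ >\ \delta\,\|\hat z^{i_k}-x^{i_k}\|.
\end{equation*}
Continuity of $P_C$ and $\alpha_{i_k}\to 0$ give $\hat z^{i_k}\to\tx$, hence the left-hand side divided by $\alpha_{i_k}/\theta$ tends to $0$ by continuity of $T$ and boundedness of the normal vectors; therefore $\|\hat z^{i_k}-x^{i_k}\|/(\alpha_{i_k}/\theta)\to 0$. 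Now invoking Corollary~\ref{projP1},
\begin{equation*}
\eta^{i_k}\ :=\ \frac{x^{i_k}-\hat z^{i_k}}{\alpha_{i_k}/\theta}-T(x^{i_k})-\tfrac{\alpha_{i_k}}{\theta}u^{i_k}\ \in\ \mcN_C(\hat z^{i_k}),
\end{equation*}
and $\eta^{i_k}\to -T(\tx)$. Closedness of $\gr(\mcN_C)$ (Fact~\ref{nor-cone}) yields $-T(\tx)\in\mcN_C(\tx)$, so Fact~\ref{f:0822b} gives $\tx\in S_*$.

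In either case $\tx\in S_*$, and Fact~\ref{punto}\ref{punto-iii} promotes subsequential convergence to full convergence of $(x^k)_{k\in\NN}$ to $\tx\in S_*$. The main obstacle is Case~2: when the stepsizes collapse we lose the uniform bound needed for the direct limiting argument of Case~1, and must instead extract a normal-cone element from the failed Armijo trial and carefully quantify the rate at which $\|\hat z^{i_k}-x^{i_k}\|$ vanishes relative to $\alpha_{i_k}$ in order to pass to the limit inside $\gr(\mcN_C)$.
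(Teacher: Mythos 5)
Your proposal is correct and follows essentially the same route as the paper's proof: Fej\'er monotonicity plus Fact~\ref{punto}\ref{punto-iii} reduce everything to finding one optimal accumulation point, $\|x^k-z^k\|\to 0$ comes from \eqref{d:useful1} and Proposition~\ref{prop2}\ref{prop2-iii}, and the two cases on the stepsizes are handled exactly as in the paper (direct limiting via Proposition~\ref{parada} when $\tilde\alpha>0$, and extraction of a normal-cone element from the failed Armijo trial via Corollary~\ref{projP1} and the closedness of $\gr(\mcN_C)$ when $\alpha_{i_k}\to 0$). The only differences are cosmetic refinements — you justify $\tilde u\in\mcN_C(\tx)$ explicitly and split on $\liminf\alpha_{i_k}$ rather than assuming WLOG that the stepsizes converge — which do not change the argument.
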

\proof
By Fact~\ref{punto}\ref{punto-iii}, we show that there exists an accumulation point of $(x^k)_{k\in \NN}$ belonging to $S_*$.
First, $(x^k)_{k\in \NN}$ is bounded due to Proposition~\ref{prop2}\ref{prop2-ii}. Let $(x^{i_k})_{k\in \NN}$ be a convergent subsequence such that $(u^{i_k})_{k\in \NN}$, $(v^{i_k})_{k\in \NN}$,
and $(\alpha_{i_k})_{k\in \NN}$ also converge. Set $\disp\lim _{k\to \infty}x^{i_k}= \tilde{x}$, $\disp\lim _{k\to \infty}u^{i_k}= \tilde{u}$,
$\disp\lim_{k\to \infty}v^{i_k}= \tilde{v}$ and $\disp\lim_{k\to \infty}\alpha^{i_k}= \tilde{\alpha}$. Using Proposition
\ref{prop2}\ref{prop2-iii}, \eqref{d:useful1}, and taking the limit as $k\to\infty$, we have
$$0=\disp\lim_{k\to \infty} \langle T(z^{i_k})+{\ov}^{i_k},x^{i_k}-z^{i_k}
\rangle \geq \frac{(1- \delta)}{\tilde{\alpha}}\lim_{k \to
\infty}\|z^{i_k}-x^{i_k}\|^2\geq 0.$$
This implies
\begin{equation}\label{limcero*}
\lim_{k\to \infty}\|x^{i_k}-z^{i_k}\|=0.
\end{equation}
Now we consider two cases:

\noindent {\bf Case~1:} $\disp\lim_{k\to \infty}\alpha_{i_k}=\tilde{\alpha}>0$. From \eqref{zk212}, the continuity of $T$ and the projection, and \eqref{limcero*}, we have
 $\tilde{x}=\disp\lim_{k\to \infty}
x^{i_k}=\lim_{k\to \infty}
z^{i_k}=P_C\big(\tilde{x}-\tilde{\alpha}(T(\tilde{x})+\tilde{\alpha}\tilde{u})\big)$. So $\tilde{x}\in S_*$ due to Proposition \ref{parada}.

\noindent {\bf Case~2:} $\disp\lim_{k \to \infty}\alpha_{i_k}=\tilde{\alpha}=0$. Define $\tilde{\alpha}_{k}:=\frac{\alpha_{k}}{\theta}$, then $\disp\lim_{k\to\infty}\tilde{\alpha}_{i_k}=0$. So we can assume $\tilde{\alpha}_{i_k}$ does not satisfy Armijo-type
condition in {\bf Linesearch~\ref{boundary}}, i.e.,
\begin{equation}\label{nosatisf}
\|T\big( \tilde{z}^{i_k}\big)-T(x^{i_k})+ \tilde{\alpha}_{i_k} \tilde{v}^{i_k}-\tilde{\alpha}_{i_k} u^{i_k}\|
>\frac{ \delta \| \tilde{z}^{i_k}-x^{i_k}\|}{\tilde{\alpha}_{i_k}},
\end{equation}
where $\tilde{v}^{i_k}\in \mcN_C(\tilde{z}^{i_k})$ and
$\tilde{z}^{i_k}=P_C(x^{i_k}-\tilde{\alpha}_{i_k}(T(x^{i_k})+\tilde{\alpha}_{i_k}u^{i_k}))$. The left hand side of \eqref{nosatisf} goes to $0$ by the continuity of $T$ and $P_C$. So,
\begin{equation}\label{frac-to-0}
\lim_{k\to \infty}\frac{\|
\tilde{z}^{i_k}-x^{i_k}\|}{\tilde{\alpha}_{i_k}}=0.
\end{equation}
By Corollary \ref{projP1}, with $x=x^{i_k}$, $\alpha=\tilde{\alpha}_{i_k}$
and $p=T(x^{i_k})+\tilde{\alpha}_{i_k} u^{i_k}$, we have
\begin{equation*}
\disp \frac{x^{i_k}-\tilde{z}^{i_k}}{\tilde{\alpha}_{i_k}}\in
T(x^{i_k})+\tilde{\alpha}_{i_k}u^{i_k}+\mcN_C(\tilde{z}^{i_k}).
\end{equation*}
Taking the limits as $k\to\infty$ and using \eqref{frac-to-0},
the continuity of $T$ and the closedness of ${\rm Gph}(\mcN_C)$, we obtain $0\in T(\tilde{x})+\mcN_C(\tilde{x})$, thus, $\tilde{x} \in S_*$.
\endproof

\subsection{Convergence Analysis of Variant
B.2}\label{sec-5.2}

We consider the case {\bf Variant~B.2} is used and the algorithm generates an infinite sequence $(x^k)_\kkk$ such that $x^k\notin S_*$ for all $\kkk$.
\begin{proposition}\label{fe*}
The sequence $(x^k)_{k\in \NN}$ is F\'ejer convergent to $S_*$.
Moreover, it is bounded and $\disp\lim_{k \to \infty} \|x^{k+1}-x^k\|=0$.
\end{proposition}
\proof
Take $x_{*}\in S_*$. By Lemma \ref{propseq}, $x_{*}\in
H(z^k,{\ov}^k)$, for all $k\in\NN$. Moreover $x_*\in C$ implies that the projection step \eqref{P122} is well-defined.
Next, using Fact~\ref{proj}\ref{proj-i} for two points $x^k$, $x_*$ and the set $C\cap H(z^k,\ov^k)$, we have
\begin{equation}\label{fejerc*}
\|x^{k+1}-x_{*}\|^2\le \|x^k-x_{*}\|^2-\|x^{k+1}-x^k\|^2.
\end{equation}
So, $(x^k)_{k\in \NN}$ is F\'ejer convergent to $S_*$. Hence,
$(x^k)_{k\in \NN}$ is bounded by Fact~\ref{punto}\ref{punto-i}. Taking the limit in (\ref{fejerc*}) and using Fact~\ref{punto}\ref{punto-ii}, we obtain the conclusion.
\endproof

\medskip
The next proposition shows a connection between the projection steps in {\bf Variant B.1} and {\bf Variant B.2}.
This fact has a geometry interpretation: in {\bf Variant B.2}, $x^k$ is projected onto a smaller set, thus, it may improve the convergence.

\begin{proposition}\label{p:0823a}
The following hold
\begin{enumerate}
\item\label{p:0823a-i} $x^{k+1}=P_{C\cap H(z^k,{\ov}^k)}(P_{H(z^k,{\ov}^k)}(x^k))$.
\item\label{p:0823a-ii} $\disp\lim_{k\to \infty}\langle T(z^k)+{\ov}^k,x^k-z^k \rangle =0$.
\end{enumerate}
\end{proposition}
\proof
\ref{p:0823a-i}:  Since $x^k \in C$ but $x^k\notin H(z^k,{\ov}^k)$ and $C\cap H_{k}\neq\emp$, the result follows from Lemma \ref{setprop}.

\noindent \ref{p:0823a-ii}: Take $x_{*}\in  S_*$. Notice that $x^{k+1}=P_{C\cap H(z^k,{\ov}^k)}(x^k)$ and that
projections onto convex sets are firmly-nonexpansive (see Fact~\ref{proj}\ref{proj-i}), we have
\begin{equation*}
\|x^{k+1}-x_{*}\|^2
=\|x^{k}-x_*\|^2-\|x^{k+1}-x^k\|^2
\leq\|x^k-x_{*}\|^2-\|P_{H(z^k,{\ov}^k)}(x^k)-x^k\|^2.
\end{equation*}
The remainder of the proof is analogous to Proposition \ref{prop2}\ref{prop2-iii}.
\endproof

Finally we present the convergence result for {\bf Variant B.2}.
\begin{proposition}\label{teo2*}
The sequence $(x^k)_{k\in \NN}$ converges to a point in $S_*$.
\end{proposition}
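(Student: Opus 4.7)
The plan is to recycle the convergence architecture already established for Variant~\ref{A2.1} in Theorem~\ref{teo1}. Since Proposition~\ref{fe*} already gives that $(x^k)_{k\in\NN}$ is Fej\'er convergent to $S_*$ and bounded, it suffices, by Fact~\ref{punto}\ref{punto-iii}, to exhibit a single accumulation point of $(x^k)_{k\in\NN}$ that lies in $S_*$. The whole sequence will then converge to that point.

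First I would extract a subsequence $(x^{i_k})_{k\in\NN}$ along which $x^{i_k}\to\tilde{x}$, and along which the auxiliary quantities $u^{i_k}\to\tilde{u}$, $v^{i_k}\to\tilde{v}$, and $\alpha_{i_k}\to\tilde{\alpha}\in[0,\sigma]$ all converge (using boundedness of $(u^k)$, $(v^k)$ from Step~2 and $\alpha_k\le\sigma$). The key preparatory step is to show $\|x^{i_k}-z^{i_k}\|\to 0$. For this, combine the algebraic identity \eqref{d:useful1}, namely
\begin{equation*}
\la T(z^k)+\ov^k,\,x^k-z^k\ra\;\ge\;\frac{1-\delta}{\alpha_k}\,\|x^k-z^k\|^2,
\end{equation*}
with the bound $\alpha_k\le\sigma$ to obtain
\begin{equation*}
\|x^k-z^k\|^2\;\le\;\frac{\sigma}{1-\delta}\,\la T(z^k)+\ov^k,\,x^k-z^k\ra,
\end{equation*}
and then invoke Proposition~\ref{p:0823a}\ref{p:0823a-ii}, which says the right-hand side tends to $0$. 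Consequently $z^{i_k}\to\tilde{x}$ as well.

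The remainder is identical in structure to the argument in Theorem~\ref{teo1} and splits into two cases. If $\tilde{\alpha}>0$, then passing to the limit in $z^k=P_C\bigl(x^k-\alpha_k(T(x^k)+\alpha_k u^k)\bigr)$ and using continuity of $T$ and $P_C$ together with closedness of $\gr(\mcN_C)$ from Fact~\ref{nor-cone} gives
\begin{equation*}
\tilde{x}=P_C\bigl(\tilde{x}-\tilde{\alpha}\bigl(T(\tilde{x})+\tilde{\alpha}\tilde{u}\bigr)\bigr),\qquad \tilde{u}\in\mcN_C(\tilde{x}),
\end{equation*}
so $\tilde{x}\in S_*$ by Proposition~\ref{parada}. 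If $\tilde{\alpha}=0$, set $\tilde{\alpha}_k:=\alpha_k/\theta$; by the minimality step in {\bf Linesearch \ref{boundary}}, the vector $\tilde{\alpha}_k$ violates the Armijo condition, yielding
\begin{equation*}
\|T(\tilde z^{k})-T(x^{k})+\tilde{\alpha}_{k}(\tilde v^{k}-u^{k})\|\;>\;\frac{\delta\,\|\tilde z^{k}-x^{k}\|}{\tilde{\alpha}_{k}},
\end{equation*}
where $\tilde z^{k}=P_C(x^{k}-\tilde{\alpha}_{k}(T(x^{k})+\tilde{\alpha}_{k}u^{k}))$. Continuity of $T$ and $P_C$ force the left-hand side to $0$ along $(i_k)_{k\in\NN}$, so $\|\tilde z^{i_k}-x^{i_k}\|/\tilde{\alpha}_{i_k}\to 0$. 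Applying Corollary~\ref{projP1} with $\alpha=\tilde{\alpha}_k$ and $p=T(x^k)+\tilde{\alpha}_k u^k$, and taking limits using the closedness of $\gr(\mcN_C)$, yields $0\in T(\tilde{x})+\mcN_C(\tilde{x})$, hence $\tilde{x}\in S_*$ by Fact~\ref{f:0822b}.

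The only real obstacle is the $\tilde{\alpha}=0$ case, which requires the same delicate argument involving the Armijo violation that was used in Theorem~\ref{teo1}; since the linesearch mechanism is identical, that argument transfers verbatim. Once $\tilde{x}\in S_*$ is established in either case, Fact~\ref{punto}\ref{punto-iii} combined with Proposition~\ref{fe*} delivers $x^k\to\tilde{x}$, completing the proof.
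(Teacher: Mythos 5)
Your proposal is correct and follows exactly the route the paper intends: the paper's proof is simply ``Similar to the proof of Theorem~\ref{teo1},'' and you have instantiated that argument for {\bf Variant~\ref{A2.2}} using Proposition~\ref{fe*} and Proposition~\ref{p:0823a}\ref{p:0823a-ii} in place of their Variant~\ref{A2.1} counterparts, with the same two-case analysis on $\tilde{\alpha}$. Your use of $\alpha_k\le\sigma$ to bound $\|x^k-z^k\|^2$ before passing to the limit is a slightly cleaner handling of the $\tilde{\alpha}=0$ situation than the paper's own phrasing, but it is not a different approach.
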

\proof
Similar to the proof of Theorem \ref{teo1}.
\endproof

\subsection{Convergence Analysis of Variant
B.3}\label{sec-5.3}

We consider the case {\bf Variant
B.3} is used and the algorithm generates an infinite sequence $(x^k)_\kkk$ such that $x^k\notin S_*$ for all $\kkk$. Observe that $C\cap H(z^k,{\ov}^k)\cap W(x^k)$ is a closed convex set. So, the algorithm is well-defined if
this set $C\cap H(z^k,{\ov}^k)\cap W(x^k)$. The following lemma guarantees its non-emptiness.

\begin{lemma}\label{lemma:3}
For all $\kkk$, we have \
$S_*\subseteq C\cap H(z^k,{\ov}^k)\cap W(x^k)$.
\end{lemma}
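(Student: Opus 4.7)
The plan is to prove the three inclusions $S_*\subseteq C$, $S_*\subseteq H(z^k,\ov^k)$, and $S_*\subseteq W(x^k)$ separately, with the third one requiring an induction on $k$. The first two are immediate from results already established, while the third uses the inductive definition of $x^{k+1}$ as a projection onto the previous intersection.

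First, $S_*\subseteq C$ is immediate from the definition of the variational inequality problem \eqref{prob}. Next, since by construction in \eqref{zk212} and \eqref{vbar322} we have $(z^k,\ov^k)\in\gr(\mcN_C)$, Lemma~\ref{propseq} directly gives $S_*\subseteq H(z^k,\ov^k)$.

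The remaining inclusion $S_*\subseteq W(x^k)$ I would establish by induction on $k$. For the base case $k=0$, note that $W(x^0)=\{y\in\RR^n:\la y-x^0,x^0-x^0\ra\le 0\}=\RR^n$, so the inclusion is trivial. For the inductive step, assume $S_*\subseteq C\cap H(z^j,\ov^j)\cap W(x^j)$ for all $j\le k$. Then $C\cap H(z^k,\ov^k)\cap W(x^k)$ is a nonempty closed convex set, so $x^{k+1}=P_{C\cap H(z^k,\ov^k)\cap W(x^k)}(x^0)$ is well-defined. For any $x_*\in S_*$, the inductive hypothesis gives $x_*\in C\cap H(z^k,\ov^k)\cap W(x^k)$, so by the characterization of projection in Fact~\ref{proj}\ref{proj-ii} applied with $x=x^0$ and $z=x_*$, we obtain $\la x_*-x^{k+1},x^0-x^{k+1}\ra\le 0$, which is precisely $x_*\in W(x^{k+1})$. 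Combining with the first two inclusions proves $S_*\subseteq C\cap H(z^{k+1},\ov^{k+1})\cap W(x^{k+1})$, closing the induction.

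The main point of care is making sure the induction is set up to simultaneously carry forward the non-emptiness of $C\cap H(z^k,\ov^k)\cap W(x^k)$ (needed so that $x^{k+1}$ is well-defined and the projection characterization can be applied) together with the desired inclusion; these are both obtained from the same inductive hypothesis in one stroke. No additional hard estimate is required.
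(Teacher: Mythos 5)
Your proof is correct and follows essentially the same route as the paper's: the inclusions $S_*\subseteq C$ and $S_*\subseteq H(z^k,\ov^k)$ come directly from the definition and Lemma~\ref{propseq}, while $S_*\subseteq W(x^k)$ is proved by induction using $W(x^0)=\RR^n$ and the projection characterization of Fact~\ref{proj}\ref{proj-ii} applied to $x^{k+1}=P_{C\cap H(z^k,\ov^k)\cap W(x^k)}(x^0)$. Your explicit remark that the induction simultaneously yields the non-emptiness needed for $x^{k+1}$ to be well-defined is exactly the point the paper's proof also relies on.
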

\proof We proceed by induction. By definition, $\emp\neq S_*\subseteq C$.
By Lemma~\ref{propseq}, $S_*\subseteq  H(z^k,{\ov}^k)$ for all $k$. Since $W(x^0)=\RR^n$, we have $S_*\subseteq
H(z^0,{\ov}^0)\cap W(x^0)$.
Assume that $S_*\subseteq H(z^k,{\ov}^k) \cap W(x^k)$. Then, $x^{k+1}=P_{C\cap H(z^k,{\ov}^k)\cap W(x^k)}(x^0)$ is well-defined.
By Fact~\ref{proj}\ref{proj-ii}, we obtain $\langle x_{*}-x^{k+1}\,,\, x^0-x^{k+1}\rangle\leq 0$ for all $x_{*}\in S_*$.
This implies $x_{*}\in W(x^{k+1})$. Hence, $S_* \subseteq H(z^{k+1},{\ov}^{k+1})\cap W(x^{k+1})$. Then,
the conclusion follows by induction principle.
\endproof

Before proving the convergence of the sequence $(x^k)_\kkk$, we
study its boundedness. The next lemma  shows that the sequence
remains in a ball determined by the initial point.
\begin{lemma}\label{l:lim}
Let $\ox=P_{S_*}(x^0)$ and $\rho={\rm dist}(x^0, S_*)$. Then $
(x^k)_{k\in \NN}\subset
B\left[\frac{1}{2}(x^0+\ox),\frac{1}{2}\rho\right]\cap C$,
in particular, $(x^k)_\kkk$ is bounded.
\end{lemma}
\proof
By Lemma~\ref{lemma:3}, we have $S_* \subseteq H(z^k,{\ov}^k) \cap W(x^k)$ for all $k$. Using Lemma~\ref{l:lim-2}, with $S=S_*$ and $x=x^k$, we obtain $x^k\in B\left[\frac{1}{2}(x^0+\ox),\frac{1}{2}\rho\right]$ for all $\kkk$. Finally, notice that $(x^k)_{k\in \NN}\subset C$.
\endproof

Now, we focus on the properties of the accumulation points.
\begin{proposition}\label{p:opt} All accumulation points of $(x^k)_{k\in \NN}$ belong to $S_*$.
\end{proposition}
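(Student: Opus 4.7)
The plan is to mirror the structure of the proof of Theorem~\ref{teo1}, but the monotonicity information used there (via Fejér convergence to $S_*$) is not directly available here because the projection is taken with respect to the fixed anchor $x^0$, not $x^k$. I will instead exploit the presence of the halfspace $W(x^k)$ to control the successive differences $\|x^{k+1}-x^k\|$, and then use $x^{k+1}\in H(z^k,\ov^k)$ together with the linesearch lower bound \eqref{d:useful1} to pass this control to $\|x^k-z^k\|$.

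First I would show that $\|x^{k+1}-x^k\|\to 0$. Since $x^{k+1}\in C\cap H(z^k,\ov^k)\cap W(x^k)\subseteq W(x^k)$, the defining inequality of $W(x^k)$ gives $\la x^{k+1}-x^k,x^0-x^k\ra\le 0$. Expanding
\begin{equation*}
\|x^{k+1}-x^0\|^2=\|x^{k+1}-x^k\|^2+2\la x^{k+1}-x^k,x^k-x^0\ra+\|x^k-x^0\|^2
\ge\|x^{k+1}-x^k\|^2+\|x^k-x^0\|^2,
\end{equation*}
so $\big(\|x^k-x^0\|\big)_{k\in\NN}$ is nondecreasing. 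Being bounded by Lemma~\ref{l:lim}, it converges, and the inequality above forces $\|x^{k+1}-x^k\|\to 0$.

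Next I would establish $\|x^k-z^k\|\to 0$. From $x^{k+1}\in H(z^k,\ov^k)$ we have $\la T(z^k)+\ov^k,x^{k+1}-z^k\ra\le 0$, hence
\begin{equation*}
\la T(z^k)+\ov^k,x^k-z^k\ra\le\la T(z^k)+\ov^k,x^k-x^{k+1}\ra\le\|T(z^k)+\ov^k\|\cdot\|x^{k+1}-x^k\|.
\end{equation*}
Combining with \eqref{d:useful1} (equivalently, the inequality derived in \eqref{paralim}) gives
\begin{equation*}
\frac{1-\delta}{\alpha_k}\|x^k-z^k\|^2\le\|T(z^k)+\ov^k\|\cdot\|x^{k+1}-x^k\|.
\end{equation*}
Since $(x^k)$ is bounded by Lemma~\ref{l:lim}, so is $(z^k)$ via the projection formula in \eqref{zk212}; continuity of $T$, the bound $\|\ov^k\|\le\alpha_k M\le\sigma M$, and $\alpha_k\le\sigma$ then make the right-hand side vanish as $k\to\infty$, proving $\|x^k-z^k\|\to 0$.

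With these two facts, the argument from Theorem~\ref{teo1} applies almost verbatim. Let $\tx$ be an accumulation point of $(x^k)$, say $x^{i_k}\to\tx$; then $z^{i_k}\to\tx$ as well. By extracting a further subsequence (using $\|u^{i_k}\|,\|v^{i_k}\|\le M$ and $\alpha_{i_k}\le\sigma$) we may assume $u^{i_k}\to\tu$, $v^{i_k}\to\tv$, and $\alpha_{i_k}\to\talpha\in[0,\sigma]$. If $\talpha>0$, passing to the limit in $z^{i_k}=P_C(x^{i_k}-\alpha_{i_k}(T(x^{i_k})+\alpha_{i_k}u^{i_k}))$ yields $\tx=P_C(\tx-\talpha(T(\tx)+\talpha\tu))$, so $\tx\in S_*$ by Proposition~\ref{parada}. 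If $\talpha=0$, then $\tilde\alpha_{i_k}:=\alpha_{i_k}/\theta\to 0$ fails the linesearch test, and arguing as in Case~2 of Theorem~\ref{teo1}---using Corollary~\ref{projP1}, closedness of $\gr(\mcN_C)$ (Fact~\ref{nor-cone}), and continuity of $T$---gives $0\in T(\tx)+\mcN_C(\tx)$, so again $\tx\in S_*$ by Fact~\ref{f:0822b}. The main obstacle in this plan is the step where $\|x^k-z^k\|\to 0$ is extracted, since Fejér convergence to $S_*$ is no longer directly at hand; the fix is the $W(x^k)$-driven telescoping above.
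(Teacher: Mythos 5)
Your proposal is correct and follows essentially the same route as the paper's proof: the monotonicity of $\bigl(\|x^k-x^0\|\bigr)_{k\in\NN}$ via $W(x^k)$ to get $\|x^{k+1}-x^k\|\to 0$, the combination of $x^{k+1}\in H(z^k,\ov^k)$ with \eqref{d:useful1} to force $\|x^k-z^k\|\to 0$, and the same two-case limit analysis borrowed from Theorem~\ref{teo1}. The only cosmetic differences are that you expand the inner product directly where the paper invokes firm nonexpansiveness of $P_{W(x^k)}$, and you obtain $\|x^k-z^k\|\to 0$ for the full sequence rather than along a subsequence.
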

\proof
Since $W(x^k)$ is a halfspace with normal $x^0-x^k$, we have $x^k=P_{W(x^k)}(x^0)$. So by the firm non-expansiveness of $P_{W(x^k)}$ (see Fact~\ref{proj}\ref{proj-i}) and $x^{k+1}\in W(x^k)$, we have $$\|x^{k+1}-x^k\|^2\leq\|x^{k+1}-x^0\|^2-\|x^k-x^0\|^2.$$
Thus, $(\|x^k-x^0\|)_{k\in \NN}$ is monotone and nondecreasing. Moreover, by Lemma \ref{l:lim}, $(\|x^k-x^0\|)_{k\in \NN}$ is bounded, thus, converges. It follows that
\begin{equation}\label{eqb}
\lim_{k\rightarrow\infty}\| x^{k+1}-x^k\|=0.
\end{equation}
Since $x^{k+1}\in H(z^k,{\ov}^k)$, we get $\langle T(z^{k})+{\ov}^{k},x^{k+1}-z^{k}\rangle \le 0$,
where $z^k$ and ${\ov}^k$ are obtained in Steps~2 and~3, respectively. Combining with \eqref{d:useful1}, we obtain
\begin{equation*}
\begin{aligned}
0&\geq\langle T(z^{k})+{\ov}^{k},x^{k+1}-x^{k}\rangle + \big\la
T(z^{k})+{\ov}^{k},x^{k}-z^{k}\big\rangle \\
&\geq
-\|T(z^{k})+{\ov}^{k}\|\cdot\|x^{k+1}-x^{k}\|+
\frac{1-\delta}{\alpha_k}
\|x^k-z^k\|^2.
\end{aligned}
\end{equation*}
Using \eqref{zk212} and some simple algebra,
\begin{equation}
\label{eq}
\|x^k-z^k\|^2\le\frac{\sigma}{1-\delta}\|T(z^{k})+{\ov}^{k}\|\cdot\|x^{k+1}-x^{k}\|.
\end{equation}
By the boundedness of
$({\ov}^k)_{k\in \NN}$ and $(x^{k})_{k\in \NN}$, we can
choose a subsequence $(i_k)_{k\in \NN}$ such that $(\alpha_{i_k})_{k\in \NN}$, $(x^{i_k})_{k\in \NN}$, and $({\ov}^{i_k})_{k\in \NN}$
converge to $\tilde{\alpha}$, $\tilde{x}$, and $\tilde{v}$, respectively. Taking the limits in \eqref{eq} and using \eqref{eqb}, we get
$\disp\lim_{k\to \infty}\|x^{i_k} - z^{i_k}\|^2=0$. Consequently, $\disp\tilde{x}=\lim_{k\to\infty}z^{i_k}$.
Now we consider two cases:

\noindent {\bf Case~1:} $\disp\lim_{k\to\infty} \alpha_{i_k} =\tilde{\alpha}> 0$. By \eqref{zk212} and the continuity of the projection, $\disp\tilde{x}=\lim_{k\to
\infty}z^{i_k}=P_C\big(\tilde{x}-\tilde{\alpha}(T(\tilde{x})+\tilde{\alpha}\tilde{u})\big)$
and hence by Proposition \ref{parada}, $\tilde{x}\in S_*$.

\noindent {\bf Case~2:} $\disp\lim_{k\to\infty} \alpha_{i_k} = \tilde{\alpha}=0$. This case is similar to the proof of Theorem~\ref{teo1}.
\endproof

\medskip
Finally, we prove that $(x^k)_{k\in \NN}$ converges to the solution closest to $x^0$.

\begin{theorem}\label{Teo-F} The sequence $(x^k)_{k\in \NN}$ converges to $\ox=P_{S_*}(x^0)$.
\end{theorem}
\proof  First, $\bar{x}$ is well-defined due to Lemma \ref{sol-convex-closed}. It follows from Lemma \ref{l:lim} that $(x^k)_{k\in \NN}\subset
B\left[\frac{1}{2}(x^0+\ox),\frac{1}{2}\rho\right]\cap C$  where $\rho={\rm dist}(x^0, S_*)$, so it
is bounded. Let $(x^{i_k})_{k\in \NN}$ be a subsequence
of $(x^k)_{k\in \NN}$ that converges to $\hat{x}$. Then,
$\hat{x}\in
B\left[\frac{1}{2}(x^0+\ox),\frac{1}{2}\rho\right]\cap C$.
Furthermore, $\hat{x}\in S_*$ due to Proposition~\ref{p:opt}. So,
$\hat{x}\in S_*  \cap  B
\left[\frac{1}{2}(x^0+\ox),\frac{1}{2}\rho\right]=\{\ox\}$. Thus, $\ox$ is the unique accumulation point of $(x^k)_{k\in \NN}$. Hence, $(x^{k})_{k\in \NN}$ converges
to $\ox\in S_*$. \endproof

\section{Conceptual Algorithm with Linesearch \ref{feasible}}\label{sec-5}

As mentioned before, the disadvantage of {\bf Linesearch~\ref{boundary}} is the necessity to compute the projection onto the feasible set within the
inner loop to find the stepsize $\alpha$. To overcome this, we propose the second conceptual algorithm that uses a linesearch along feasible directions. 

We further note that in {\bf Linesearch~\ref{feasible}} below, if we set $u=0\in\mcN_C(x)$, then the projection step is done {\em outside} the {\bf While} loop.
\vspace*{-.15in}
\begin{center}\fbox{\begin{minipage}[b]{\textwidth}
\begin{linesr}{F}
{\rm(Linesearch along the feasible direction)}
\label{feasible}

\medskip
{\bf Input:} $(x,u,\beta,\delta,M)$. Where $x\in C$, $u\in \mcN_C(x)$, $\beta>0$, $\dd\in(0,1)$, and $M>0$.

Set $\alpha\leftarrow 1$ and $\theta \in (0,1)$. Define $z_{\alpha}=P_C(x-\beta(T(x)+\alpha u))$ and choose
$u\in \mcN_C(x)$, $v_1\in\mcN_C(z_1)$ with $\|v_1\|\leq M$.
\begin{retraitsimple}
\item[] {\bf While} $\langle T
\big(\alpha z_{\alpha}+(1-\alpha)x\big)+v_{\alpha}, x-z_{\alpha}\ra
< \delta \langle T
(x)+ \alpha u, x-z_{\alpha} \ra$ {\bf do}

$\alpha\leftarrow\theta \alpha$ \ and \ choose any $v_{\alpha}\in\mcN_C(\alpha z_{\alpha}+(1-\alpha)x)$ with $\|v_\alpha\|\leq M$.

\item[] {\bf End While}
\end{retraitsimple}
{\bf Output:} $(\alpha,z_\alpha,v_{\alpha})$.
\end{linesr}\end{minipage}}\end{center}
Again, {\bf Linesearch \ref{feasible}} is
also well-defined assuming only \ref{a1}, i.e., continuity of $T$.
\begin{lemma}\label{feasible-well}
If $x\in C$ and $x\notin S_*$, then {\bf Linesearch \ref{feasible}} stops after finitely many steps.
\end{lemma}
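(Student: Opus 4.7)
The plan is to argue by contradiction, assuming the inner loop never stops, and then pass to the limit $\alpha\to 0$ to derive an inequality that contradicts $x\notin S_*$.

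Concretely, I would suppose that for every $\alpha_k=\theta^k$ (with $\alpha_0=1$) we have
\[
\la T(\alpha_k z_{\alpha_k}+(1-\alpha_k)x)+v_{\alpha_k},\, x-z_{\alpha_k}\ra
< \delta\,\la T(x)+\alpha_k u,\, x-z_{\alpha_k}\ra,
\]
where $z_{\alpha_k}=P_C(x-\beta(T(x)+\alpha_k u))$ and $v_{\alpha_k}\in\mcN_C(\alpha_k z_{\alpha_k}+(1-\alpha_k)x)$ with $\|v_{\alpha_k}\|\le M$. Letting $k\to\infty$, continuity of $P_C$ and of $T$ give $z_{\alpha_k}\to z_0:=P_C(x-\beta T(x))$ and $\alpha_k z_{\alpha_k}+(1-\alpha_k)x\to x$. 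Since $(v_{\alpha_k})$ is bounded by $M$, I can extract a subsequence with $v_{\alpha_k}\to v$; the closedness of $\mathrm{Gph}(\mcN_C)$ (Fact~\ref{nor-cone}) forces $v\in\mcN_C(x)$.

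Passing to the limit in the linesearch inequality along this subsequence yields
\[
\la T(x)+v,\, x-z_0\ra \le \delta\,\la T(x),\, x-z_0\ra,
\]
which rearranges to $\la v,\,x-z_0\ra \le -(1-\delta)\la T(x),\,x-z_0\ra$. Since $v\in\mcN_C(x)$ and $z_0\in C$, the definition of the normal cone gives $\la v,\,x-z_0\ra\ge 0$. On the other hand, because $x\notin S_*$, Fact~\ref{f:0822b} implies $z_0\ne x$, and applying Fact~\ref{proj}\ref{proj-iii} with $y=x-\beta T(x)$ and $z=z_0$ produces
\[
\beta\,\la T(x),\,x-z_0\ra \;\ge\; \|x-z_0\|^2 \;>\;0.
\]
Substituting these two facts into the limit inequality gives $0\le -(1-\delta)\|x-z_0\|^2/\beta<0$, a contradiction. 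Hence the loop must terminate after finitely many steps.

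The only nonroutine ingredient is the limit argument for the normal-cone elements: I must be careful that $v_{\alpha_k}$ is attached to the \emph{shifted} base point $\alpha_k z_{\alpha_k}+(1-\alpha_k)x$ (which lies in $C$ as a convex combination of two points of $C$) rather than to $z_{\alpha_k}$ itself, so that the closedness of $\mathrm{Gph}(\mcN_C)$ correctly places the limit $v$ in $\mcN_C(x)$. The uniform bound $\|v_{\alpha_k}\|\le M$, built into the linesearch specification, is what guarantees a convergent subsequence and is therefore essential to the whole argument.
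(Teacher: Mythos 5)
Your proof is correct and follows essentially the same route as the paper's: contradiction, passing to the limit $\alpha\to 0$ using the bound $\|v_\alpha\|\le M$ and the closedness of ${\rm Gph}(\mcN_C)$ to get $v\in\mcN_C(x)$, and then combining the normal-cone inequality with Fact~\ref{proj}\ref{proj-iii} to force $x=z_0=P_C(x-\beta T(x))$. The only cosmetic difference is that you invoke $x\notin S_*$ up front to get a strict inequality, whereas the paper first deduces $x=z_0$ and then contradicts $x\notin S_*$; these are logically equivalent.
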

\proof
Suppose on the contrary that {\bf Linesearch \ref{feasible}} does not stop for all $\alpha\in \mcP:=\{1, \theta, \theta^2, \ldots\}$ and that
\begin{equation}
v_{\alpha}\in\mcN_C\big(\alpha z_\alpha+(1-\alpha)x\big),\quad
\|v_\alpha\|\leq M,\quad
z_\alpha=P_C\big(x-\beta(T(x)+ \alpha u)\big).\label{vz_alpha}
\end{equation}
We have
\begin{equation}\label{no-arm}
\langle T (\alpha z_\alpha+(1-\alpha)x)+v_\alpha, x-z_\alpha\rangle < \delta
\langle T (x)+\alpha u, x-z_\alpha \ra.
\end{equation}
By \eqref{vz_alpha}, the sequence $(v_\alpha)_{\alpha\in \mcP}$ is bounded. Thus, without loss of generality, we can assume that it converges to some
$v_0\in\mcN_C(x)$ (by Fact~\ref{nor-cone}). The continuity of the projection operator and the formula of $z_\alpha$ in
\eqref{vz_alpha} imply that $(z_\alpha)_{\alpha\in \mcP}$ converges to $z_0=P_C(x-\beta T(x))$. Taking the
limit in \eqref{no-arm} as $\alpha\to 0$, we get $\langle T(x)+ v_0, x -z_0 \rangle \le \delta \langle T(x), x -z_0
\ra$. It follows that
\begin{equation*}
0\geq (1-\delta)\langle  T(x), x -z_0 \ra+\langle v_0, x -z_0 \rangle \ge
(1-\delta)\langle  T(x), x -z_0 \rangle \ge \frac{(1-\delta)}{\beta}
\|x-z_0\|^2.
\end{equation*}
So, $x=z_0=P_C(x-\beta T(x))$, i.e., $x\in S_*$, a contradiction.
\endproof

Next, we present the conceptual algorithm, which is related to {\bf Algorithm~\ref{Extragradient}} with Strategy ({\bf c}) when nonzero normal vectors are used. Here, we assume that \ref{a1} and \ref{a2} hold.
\begin{center}\fbox{\begin{minipage}[b]{\textwidth}
\begin{Calg}{F}\label{A1}
Given $(\beta_k)_{k\in \NN}\subset[\check{\beta},\hat{\beta}]$ $0<\check{\beta}\le \hat{\beta}<+\infty$, $\dd\in(0,1)$, and $M>0$.
\item[ ]{\bf Step~0 (Initialization):} Take $x^0\in C$ and set $k\leftarrow 0$.

\item[ ]{\bf Step~1 (Stopping Test):} If $x^k=P_C(x^k-T(x^k))$, then stop. Otherwise,

\item[ ]{\bf Step 2 (Linesearch \ref{feasible}):} Take $u^k\in \mcN_C(x^k)$ with $\|u^k\|\leq M$ and
set
\begin{equation}\label{vbar}
(\alpha_k,z^k,{\ov}^{k})= {\bf Linesearch\; \ref{feasible}}\;(x^k, u^k,\beta_k,\dd,M),
\end{equation}
i.e., $(\alpha_k,z^k,\bar{v}^k)$ satisfies
\begin{subequations}\label{zk212*}
\begin{align}
&\bar{v}^k\in \mcN_C(\alpha_k z^k+(1-\alpha_k)x^k) \ \mbox{with} \ \|\bar{v}^k\|\leq M,\quad \alpha_k\leq1,\label{zk212*-a}\\
& z^k=P_C(x^{k}-\beta_k(T(x^{k})+\alpha_k u^k)),\label{zk212*-b}\\
&\langle T(\alpha_k z^k+(1-\alpha_k)x^k)+\bar{v}^k,x^k-z^k\rangle \geq\delta\langle T(x^k)+\alpha_ku^k,x^k-z^k\ra.\label{zk212*-c}
\end{align}
\end{subequations}
\item[ ]{\bf Step 3 (Projection):} Set \
$\ox^k:=\alpha_k z^k+(1-\alpha_k)x^k$ \ and \ $x^{k+1}:=\mcF_F(x^k)$.

\item[ ]{\bf Step 4:} Set $k\leftarrow k+1$ and go to {\bf Step~1}.
\end{Calg}\end{minipage}}\end{center}
We also consider three variants of $\mcF_F$ in Step~3:
\begin{align}
\mcF_{\rm F.1}(x^k) =& P_C\big(P_{H(\ox^k,{\ov}^k)}(x^k)\big),\label{P11} \quad   &{(\bf Variant\; F.1)} \\
\mcF_{\rm F.2}(x^k) =& P_{C\cap H(\ox^k,{\ov}^k)}(x^k),\label{P12}\quad  &{(\bf Variant\; F.2)}\\
\mcF_{\rm F.3}(x^k) =& P_{C\cap H(\ox^k,{\ov}^k)\cap
W(x^k)}(x^0),\quad  & {(\bf Variant\; F.3)}\label{P13}
\end{align}
where, similar to \eqref{e:HW},
\begin{subequations}\label{e:HWF}
\begin{align}
H(\ox^k,\ov^k)&:=\big\{ y\in \RR^n : \langle T(\ox^k)+\ov^k,y-\ox^k\rangle \le 0\big \},\label{H(x,v)F}\\
\text{and}\qquad
W(x^k)&:=\big\{ y\in \RR^n : \langle y-x^k,x^0-x^k\rangle \le 0\big \}.\label{W(x)F}
\end{align}
\end{subequations}
Now, we analyze some general properties of {\bf Conceptual Algorithm~\ref{A1}}.
\begin{proposition}\label{propdef}
Assuming that $\mcF_F(x^k)$ is well-defined whenever $x^k$ is available. Then, {\bf Conceptual Algorithm \ref{A1}} is well-defined.
\end{proposition}
\proof
If Step~1 is not satisfied, then Step~2 is guaranteed by Lemma~\ref{feasible-well}. Thus, the entire algorithm is well-defined.
\endproof

\begin{proposition}\label{H-separa-x12} 
$x^k\in S_*$ if and only if
$x^k \in H(\ox^k,{\ov}^k)$ where ${\ov}^k$ and $\ox^k$ are given in Steps~2 and~3, respectively,
\end{proposition}
\proof
If $x^k \in S_*$, then $x^k \in H(\ox^k,{\ov}^k)$ by Lemma~\ref{propseq}. Conversely, suppose $x^k\in H(\ox^k,{\ov}^k)$, $\langle T(\ox^k)+{\ov}^k, x^k-\ox^k\rangle \le 0$. Using the definitions of $\alpha_k$ and $\ox^k$ in Steps~2 and~3, we have
\begin{equation*}
0\geq\, \langle T(\ox^k)+{\ov}^k, x^k-\ox^k\rangle =\alpha_k\la
T(\ox^k)+{\ov}^k,x^k-z^k\rangle \ge\alpha_k\delta\big\langle T(x^k)+\alpha_k u^k, x^k -z^k\big \ra.
\end{equation*}
From the definition of $z^k$ in Step~2, we derive
\begin{equation*}
\alpha_k\delta\big\langle T(x^k)+\alpha_k u^k, x^k -z^k\big \rangle \geq
\frac{\alpha_k\delta}{\hat{\beta_k}}\|x^k-z^k\|^2
\ge \,\frac{\alpha_k\delta}{\hat{\beta}}\|x^k-z^k\|^2.
\end{equation*}
It follows that $0\geq\|x^k-z^k\|^2$, i.e., $x^k=z^k$. Now from Proposition~\ref{parada}, we conclude $x^k\in S_*$.
\endproof

\medskip
Let $(x^k)_{k\in \NN}$ and $(\alpha_k)_{k\in \NN}$
be sequences generated by {\bf Conceptual Algorithm~\ref{A1}} and suppose that $x^k\notin S_*$. From the proof of Proposition~\ref{H-separa-x12}, we obtaina useful algebraic property
\begin{equation}\label{d:useful}
\forall k\in\NN:\quad\langle T(\ox^{k})+{\ov}^{k},x^{k}-\ox^{k} \rangle  \geq
\frac{\alpha_k \delta}{\hat{\beta}}\|x^k-z^k\|^2.
\end{equation}

\begin{proposition}\label{p:aFeas_xk1_neq_xk}
If Stopping Test is not satisfied at $x^k$, then {\bf Conceptual Algorithm~\ref{feasible}} generates $x^{k+1}\neq x^k$.
\end{proposition}
\proof Suppose on the contrary that $x^{k+1}=x^k$. Consider three cases.

If {\bf Variant~F.1} is used, then $x^{k+1}=P_C\big(P_{H(\ox^k,{\ov}^k)}(x^k)\big)=x^k$. So Fact~\ref{proj}\ref{proj-ii} implies
\begin{equation}\label{proyex}
\forall z\in C:\quad\langle P_{H(\ox^k,{\ov}^k)}(x^k)-x^k, z-x^k\rangle \leq 0.
\end{equation}
Again, using Fact~\ref{proj}\ref{proj-ii},
\begin{equation}\label{proyeh}
\forall z\in H(\ox^k,{\ov}^k):\quad
\langle P_{H(\ox^k,{\ov}^k)}(x^k)-x^k,
P_{H(\ox^k,{\ov}^k)}(x^k)-z\rangle \leq 0.
\end{equation}
Note that $\emp\neq S_*\subseteq C\cap H(\ox^k,{\ov}^k)$ by Proposition~\ref{H-separa-x12}. So, taking any $z\in C\cap H(\ox^k,{\ov}^k)$, then adding up \eqref{proyex} and \eqref{proyeh}, we derive $\|x^k-P_{H(\ox^k,{\ov}^k)}(x^k)\|^2=0$.
Hence, $x^k=P_{H(\ox^k,{\ov}^k)}(x^k)$, i.e., $x^k\in H(\ox^k,{\ov}^k)$. 

If {\bf Variant~F.2} is used, then $x^{k+1}=P_{C\cap H(\ox^k,{\ov}^k)}(x^k)=x^k$. So $x^k\in H(\ox^k,{\ov}^k)$.

If {\bf Variant~F.3} is used, then $x^{k+1}=P_{C\cap H(\ox^k,{\ov}^k)\cap W(x^k)}(x^0)=x^k$. So $x^k\in H(\ox^k,{\ov}^k)$.

Hence, in all cases, we have showed that $x^k\in H(\ox^k,\ov^k)$, which means $x^k\in S_*$ by Proposition~\ref{H-separa-x12}. By Fact~\ref{f:0822b}, we get $x^k=P_C(x^k-T(x^k))$, i.e., Stopping Test is satisfied at $x^k$, a contradiction.
\endproof

In view of Proposition~\ref{p:aFeas_xk1_neq_xk}, we will again examine only the case that Stopping Test is not satisfied for all $x^k$. In this case, {\bf Conceptual Algorithm~\ref{A1}} generates an infinite sequence $(x^k)_{k\in \NN}$ such that $x^k\notin S_*$ for all $\kkk$.

\subsection{Convergence Analysis of Variant~F.1}\label{sec-4.1}

We consider the case {\bf Variant~F.1} is used and the algorithm generates an infinite sequence $(x^k)_{k\in \NN}$ such that
$x^k\not\in S_*$ for all $k\in\NN$. Note that by Lemma~\ref{propseq}, $H(\ox^k,{\ov}^k)$ is nonempty for all $k$.
Then, the projection step~\eqref{P11} is well-defined and so is the entire algorithm.

\begin{proposition}\label{prop222}
The following hold:
\begin{enumerate}
\item\label{prop222-i}  The sequence $(x^k)_{k\in \NN}$ is Fej\'er convergent to $S_*$.
\item\label{prop222-ii} The sequence $(x^k)_{k\in \NN}$ is bounded.
\item\label{prop222-iii} $\disp\lim_{k\to \infty}\langle T(\ox^k)+{\ov}^k,x^k-\ox^k \rangle =0$.
\end{enumerate}
\end{proposition}
\proof
\ref{prop222-i}: Take $x_{*}\in S_*$. Note that, by definition $(\ox^k,{\ov}^k) \in {\rm Gph}(\mcN_C)$.
Using \eqref{P11}, Fact~\ref{proj}(i) and Lemma \ref{propseq}, we have
\begin{align}\label{fejer-des1}\nonumber
\|x^{k+1}-x_{*}\|^2&=\|P_{C}(P_{H(\ox^k,{\ov}^k)}(x^k))-
P_{C}(P_{H(\ox^k,{\ov}^k)}(x_{*}))\|^2\\
&\leq\|P_{H(\ox^k,{\ov}^k)}(x^k)- P_{H(\ox^k,{\ov}^k)}(x_{*})\|^2\\
&\leq \|x^k-x_{*}\|^2-\|P_{H(\ox^k,{\ov}^k)}(x^k)-x^k\|^2
\leq \|x^k-x_{*}\|^2.
\end{align}

\ref{prop222-ii}: Follows immediately from \eqref{prop222-i} and Fact~\ref{punto}\ref{punto-i}.

\ref{prop222-iii}: Take $x_{*} \in S_*$. Using $\disp
P_{H(\ox^k,{\ov}^k)}(x^k)=x^k-\frac{\big\langle
T(\ox^k)+{\ov}^k,x^k-\ox^k\big\rangle}{\|T(\ox^k)+{\ov}^k\|^2}
\big{(}T(\ox^k)+{\ov}^k\big{)}$, \eqref{fejer-des1}, and the definition of $\ox^k$ in Step~3, we derive
\begin{align*}
\|x^{k+1}-x_{*}\|^2 &\leq \|x^k-x_{*}\|^2 -\left\|x^k-
\frac{\big\langle T(\ox^k)+{\ov}^k,x^k-\ox^k\big\rangle}{\|T(\ox^k)+{\ov}^k\|^2} \big(T(\ox^k)+{\ov}^k\big)-x^k\right\|^2\\
&=\|x^k-x_{*}\|^2-\frac{\la
T(\ox^k)+{\ov}^k,x^k-\ox^k\ra^2}{\|T(\ox^k)+{\ov}^k\|^2}.
\end{align*}
It follows that
$\disp\frac{\la
T(\ox^k)+{\ov}^k,x^k-\ox^k\ra^2}{\|T(\ox^k)+{\ov}^k\|^2}\le
\|x^{k}-x_{*}\|^2- \|x^{k+1}-x_{*}\|^2\to 0$.
By Fact~\ref{punto}\ref{punto-ii}, the right hand side goes to zero as $k\to\infty$. Since $T$ is continuous and $(x^k)_{k\in \NN}$, $(z^k)_{k\in \NN}$
and $(\ox^k)_{k\in \NN}$ are bounded, $\big(\|T(\ox^k)+{\ov}^k\|\big)_{k\in \NN}$ is also bounded. So the conclusion follows.
\endproof

Next, we establish our main convergence result for {\bf Variant
F.1}.
\begin{theorem}\label{teo12}
The sequence $(x^k)_{k\in \NN}$ converges to a point in $S_* $.
\end{theorem}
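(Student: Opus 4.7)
\proof
The strategy parallels the proof of Theorem~\ref{teo1}. Since $(x^k)_{k\in\NN}$ is Fej\'er convergent to $S_*$ by Proposition~\ref{prop222}\ref{prop222-i}, Fact~\ref{punto}\ref{punto-iii} tells us that it suffices to exhibit a single accumulation point of $(x^k)_{k\in\NN}$ that lies in $S_*$. Boundedness of $(x^k)_{k\in\NN}$ (Proposition~\ref{prop222}\ref{prop222-ii}) guarantees the existence of accumulation points. By extracting a subsequence $(x^{i_k})_{k\in\NN}$ and using that $(u^{i_k})$, $(\alpha_{i_k})$, $(\beta_{i_k})$, $(\ov^{i_k})$ are bounded (by $M$, $1$, $\hat{\beta}$ and $M$ respectively), I may assume along a further subsequence that $x^{i_k}\to\tilde{x}$, $u^{i_k}\to\tilde{u}$, $\alpha_{i_k}\to\tilde{\alpha}$, $\beta_{i_k}\to\tilde{\beta}\in[\check{\beta},\hat{\beta}]$, $\ov^{i_k}\to\tilde{v}$.

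Combining Proposition~\ref{prop222}\ref{prop222-iii} with the algebraic inequality \eqref{d:useful}, I obtain $\alpha_{i_k}\|x^{i_k}-z^{i_k}\|^2\to 0$. This splits the analysis into two cases.

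\textbf{Case 1:} $\tilde{\alpha}>0$. Then $\|x^{i_k}-z^{i_k}\|\to 0$, so $z^{i_k}\to\tilde{x}$. Passing to the limit in the identity $z^{i_k}=P_C\big(x^{i_k}-\beta_{i_k}(T(x^{i_k})+\alpha_{i_k} u^{i_k})\big)$ and using continuity of $T$ and $P_C$, I get $\tilde{x}=P_C\big(\tilde{x}-\tilde{\beta}(T(\tilde{x})+\tilde{\alpha}\tilde{u})\big)$, which together with Fact~\ref{nor-cone} and Proposition~\ref{parada} yields $\tilde{x}\in S_*$.

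\textbf{Case 2:} $\tilde{\alpha}=0$. This is the main obstacle and requires exploiting the failure of the Armijo rule one step before acceptance. Set $\tilde{\alpha}_k:=\alpha_k/\theta$, so $\tilde{\alpha}_{i_k}\to 0$, and the inequality in {\bf Linesearch~\ref{feasible}} fails at $\tilde{\alpha}_{i_k}$. That is, for some $\tilde{z}^{k}=P_C(x^{k}-\beta_k(T(x^{k})+\tilde{\alpha}_k u^k))$ and $\tilde{v}^{k}\in\mcN_C(\tilde{\alpha}_k\tilde{z}^{k}+(1-\tilde{\alpha}_k)x^{k})$ with $\|\tilde{v}^{k}\|\le M$, one has
\begin{equation*}
\la T(\tilde{\alpha}_k\tilde{z}^{k}+(1-\tilde{\alpha}_k)x^{k})+\tilde{v}^{k},\,x^{k}-\tilde{z}^{k}\ra < \delta\,\la T(x^{k})+\tilde{\alpha}_k u^{k},\,x^{k}-\tilde{z}^{k}\ra.
\end{equation*}
Along the subsequence $(i_k)$, continuity of $P_C$ gives $\tilde{z}^{i_k}\to z_0:=P_C(\tilde{x}-\tilde{\beta}T(\tilde{x}))$, while $\tilde{\alpha}_{i_k}\tilde{z}^{i_k}+(1-\tilde{\alpha}_{i_k})x^{i_k}\to\tilde{x}$, so (by passing to a further subsequence and using Fact~\ref{nor-cone}) $\tilde{v}^{i_k}\to v_0\in\mcN_C(\tilde{x})$. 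Taking the limit yields $\la T(\tilde{x})+v_0,\tilde{x}-z_0\ra\le \delta\,\la T(\tilde{x}),\tilde{x}-z_0\ra$. Since $z_0\in C$ and $v_0\in\mcN_C(\tilde{x})$, we have $\la v_0,\tilde{x}-z_0\ra\ge 0$, so $(1-\delta)\la T(\tilde{x}),\tilde{x}-z_0\ra\le 0$. On the other hand, applying Fact~\ref{proj}\ref{proj-iii} with $x=\tilde{x}$, $y=\tilde{x}-\tilde{\beta}T(\tilde{x})$ and $z=z_0$ gives $\tilde{\beta}\,\la T(\tilde{x}),\tilde{x}-z_0\ra\ge\|\tilde{x}-z_0\|^2\ge 0$. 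These two inequalities together with $\tilde{\beta}>0$ and $\delta\in(0,1)$ force $\tilde{x}=z_0=P_C(\tilde{x}-\tilde{\beta}T(\tilde{x}))$, and Fact~\ref{f:0822b} concludes $\tilde{x}\in S_*$. In either case an accumulation point in $S_*$ is produced, and Fact~\ref{punto}\ref{punto-iii} gives the convergence of the full sequence.
\endproof
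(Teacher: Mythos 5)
Your proposal is correct and follows essentially the same route as the paper's proof: Fej\'er convergence plus one optimal accumulation point, the estimate $\alpha_{i_k}\|x^{i_k}-z^{i_k}\|^2\to0$ from Proposition~\ref{prop222}\ref{prop222-iii} and \eqref{d:useful}, and the same two-case analysis on $\tilde\alpha$, with Case~2 handled by passing to the limit in the failed Armijo inequality at $\alpha_k/\theta$. The only cosmetic difference is that you invoke Fact~\ref{proj}\ref{proj-iii} and Fact~\ref{f:0822b} explicitly where the paper carries out the same computation inline.
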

\proof
By Fact~\ref{punto}\ref{punto-iii}, we show that there exists an accumulation point of $(x^k)_{k\in \NN}$ belonging to $S_*$.
First, $(x^{k})_{\kkk}$ is bounded due to Proposition~\ref{prop222}\ref{prop222-ii}. Let $(x^{i_k})_{k\in
\NN}$ be a convergent subsequence of $(x^k)_{k\in \NN}$ such that, $(\ox^{i_k}), ({\ov}^{i_k}),(u^{i_k}), (\alpha_{i_k})_{k\in
\NN}$, and $(\beta_{i_k})_{k\in \NN}$ also converge. Set $\disp\lim
_{k\to \infty}x^{i_k}= \tilde{x}$, $\disp\lim _{k\to \infty}u^{i_k}=
\tilde{u}$, $\disp\lim _{k\to \infty}\alpha_{i_k}= \tilde{\alpha}$, and
$\disp\lim_{k\to \infty}\beta_{i_k}= \tilde{\beta}$. Using Proposition~\ref{prop222}\ref{prop2-iii}, \eqref{d:useful},
and taking the limit as $k\to\infty$, we derive
$\disp
0=\lim_{k\to \infty}\langle T(\ox^{i_k})+\bar{u}^{i_k},x^{i_k}-\ox^{i_k} \rangle \ge \lim_{k\to \infty}
\frac{\alpha_{i_k}}{\hat{\beta}}\delta\|x^{i_k}-z^{i_k}\|^2\geq0.
$
Therefore,
\begin{equation}\label{lim-0}
\lim_{k\to \infty} \alpha_{i_k}\|x^{i_k}-z^{i_k}\|=0.
\end{equation}
Now we consider two cases.

\noindent {\bf Case~1:} $\disp\lim_{k\to \infty}\alpha_{i_k}=\tilde{\alpha}>0$. From \eqref{lim-0},
the continuity of $T$ and the projection, we obtain $\disp\tilde{x}=\lim_{k\to
\infty}
x^{i_k}=\lim_{k\to
\infty}
z^{i_k}=P_C\big(\tilde{x}-\tilde{\beta}(T(\tilde{x})+\tilde{\alpha}\tilde{u})\big)$.
So, $\tilde{x}\in S_*$ by Proposition \ref{parada}.

\noindent {\bf Case~2:} $\disp\lim_{k\to \infty}\alpha_{i_k}=\tilde{\alpha}=0$. Define
$\tilde{\alpha}_{i_k}=\frac{\alpha_{i_k}}{\theta}$. Then, $\disp\lim_{k\to\infty}\tilde{\alpha}_{i_k}=0$. So we can assume $\tilde{\alpha}_{i_k}$ does not satisfy Armijo-type condition in {\bf Linesearch~\ref{feasible}}, i.e.,
\begin{equation}\label{conse}
\Big\langle T(\tilde{y}^{i_k}) + \tilde{v}^{i_k}, x^{i_k} -
\tilde{z}^{i_k}\Big\ra<\delta \langle T (x^{i_k})+\tilde{\alpha}_{i_k}u^{i_k},
x^{i_k}-\tilde{z}^{i_k}\rangle ,
\end{equation}
where
$\tilde{y}^{i_k}:=\tilde{\alpha}_{i_k}\tilde{z}^{i_k}+(1-\tilde{\alpha}_{i_k})x^{i_k}$,
$\tilde{z}^{i_k}=P_C(x^{i_k}-\beta_{i_k}(T(x^{i_k})+\tilde{\alpha}_{i_k} u^{i_k}))$, and $\tilde{v}^{i_k}\in\mcN_C(\tilde{y}^{i_k})$ with $\|\tilde{v}^{i_k}\|\leq M$. Hence,
$\tilde{y}^{i_k}\to\tilde{x}$. Next, taking a subsequence without relabeling, we assume that $\disp\lim_{k\to\infty}\tilde{v}^{i_k}=\tilde{v}$. So $\tilde{v}\in \mcN_C(\tilde{x})$ by Fact~\ref{nor-cone}.  Moreover, $\disp\lim_{k\rightarrow\infty}\tilde{z}^{i_k}=\tilde{z}=
P_C\big(\tilde{x}-\tilde{\beta}T(\tilde{x})\big)$ by the continuity of $T$ and $P_C$. Thus, passing to the limit in \eqref{conse}, we get $\langle T(\tilde{x}) + \tilde{v}, \tilde{x}-\tilde{z}\rangle \le
\delta \langle T(\tilde{x}) , \tilde{x}-\tilde{z}\ra$.
It follows that
\begin{align*}
0 &\geq 
\langle T(\tilde{x}) + \tilde{v}, \tilde{x}-\tilde{z}\rangle -
\delta \langle T(\tilde{x}) , \tilde{x}-\tilde{z}\rangle \\
&= (1-\delta) \big\langle T(\tilde{x}), \tilde{x}-\tilde{z}\big\rangle + \big\la\tilde{v}, \tilde{x}-\tilde{z}\big\ra
\ge (1-\delta) \big\langle T(\tilde{x}), \tilde{x}-\tilde{z}\big\rangle \\
&=\frac{(1-\delta)}{\tilde{\beta}} \big\la
\tilde{x}-(\tilde{x}-\tilde{\beta} T(\tilde{x})), \tilde{x}
-\tilde{z}\rangle \ge
\frac{(1-\delta)}{\tilde{\beta}}\|\tilde{x}-\tilde{z}\|^2
\ge  \,\frac{(1-\delta)}{\hat{\beta}}\|\tilde{x}-\tilde{z}\|^2 \ge
0.\nonumber
\end{align*}
This means $\tilde{x}=\tilde{z}$, which implies $\tilde{x}\in S_*$.
\endproof

\subsection{Convergence Analysis of Variant F.2}\label{sec-4.2}

We consider the case {\bf Variant~F.2} is used and the algorithm generates an infinite sequence $(x^k)_{k\in \NN}$ such that
$x^k\not\in S_*$ for all $k\in\NN$.

\begin{proposition}\label{fe}
The sequence $(x^k)_{k\in \NN}$ is F\'ejer convergent to $S_*$. Moreover, it is bounded and $\disp\lim_{k \to \infty} \|x^{k+1}-x^k\|=0$.
\end{proposition}
\proof
Take $x_{*}\in S_*\subseteq C$. By Lemma~\ref{propseq}, $x_{*}\in
H(\ox^k,{\ov}^k)$ for all $k$. So, the projection step \eqref{P12} is well-defined.
Then, using Fact~\ref{proj}\ref{proj-i} for the projection operator $P_{H(\ox^k,{\ov}^k)}$, we obtain
\begin{equation}\label{fejerc}
\|x^{k+1}-x_{*}\|^2\le \|x^k-x_{*}\|^2-\|x^{k+1}-x^k\|^2
\leq\|x^k-x_*\|^2.
\end{equation}
So $(x^k)_{k\in \NN}$ is F\'ejer convergent to $S_*$.
Thus, by Fact~\ref{punto}\ref{punto-i}\&\ref{punto-ii}, $(x^k)_{k\in \NN}$ is bounded and thus $(\|x^k-x_{*}\|)_{k\in \NN}$ is a convergent sequence. By passing to the limit in \eqref{fejerc} and using Fact~\ref{punto}\ref{punto-ii}, we get $\disp\lim_{k \to \infty} \|x^{k+1}-x^k\|=0$.
\endproof

\medskip
Again, in {\bf Variant~F.2}, $x^k$ is projected onto a smaller set than in {\bf Variant~F.1}, the former variant may improve the convergence.
\begin{proposition}\label{p:1027a}
Let $(x^{k})_{k\in \NN}$ be the sequence generated by {\bf Variant F.2}. Then,
\begin{enumerate}
\item\label{p:1027a-i} $x^{k+1}=P_{C\cap H(\ox^k,{\ov}^k)}(P_{H(\ox^k,{\ov}^k)}(x^k))$.
\item\label{p:1027a-ii} $\disp\lim_{k\to \infty}\langle T(\ox^k)+{\ov}^k,x^k-\ox^k \rangle =0$.
\end{enumerate}
\end{proposition}
\proof
(i):  Since $x^k \in C$ but $x^k\notin H(\ox^k,{\ov}^k)$  and $C\cap H_{k}\neq
\emp$, by Lemma \ref{setprop}, we have the result.

(ii): Take $x_{*}\in  S_*$. Notice that $x^{k+1}=P_{C\cap H(z^k,\ov^k)}(x^k)$ and that projections onto convex sets are
firmly-nonexpansive (see Fact~\ref{proj}\ref{proj-i}), we have
\begin{equation*}
\|x^{k+1}-x_{*}\|^2
\leq \|x^k-x_*\|^2-\|x^{k+1}-x^k\|^2
\leq\|x^k-x_{*}\|^2-\|P_{H(\ox^k,{\ov}^k)}(x^k)-x^k\|^2.
\end{equation*}
The rest of the proof is analogous to Proposition \ref{prop222}\ref{prop222-iii}.
\endproof

\begin{proposition}\label{teo2}
The sequence $(x^k)_{k\in \NN}$ converges to a point in $S_*$.
\end{proposition}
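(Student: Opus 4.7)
The plan is to follow the template of Theorem \ref{teo12}, adapted to use the Fej\'er machinery already established for {\bf Variant \ref{A1.2}} in Proposition \ref{fe}, together with the asymptotic identity from Proposition \ref{p:1027a}\ref{p:1027a-ii}. The high-level strategy is: produce an accumulation point of $(x^k)_{k\in\NN}$, show it lies in $S_*$, then invoke Fact \ref{punto}\ref{punto-iii} to upgrade accumulation to convergence.

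First I would record that by Proposition \ref{fe} the sequence $(x^k)_{k\in\NN}$ is bounded and Fej\'er convergent to $S_*$, so at least one accumulation point $\tx$ exists. I would extract a subsequence $(x^{i_k})_{k\in\NN}$ converging to $\tx$ along which the auxiliary quantities $(\alpha_{i_k}), (\beta_{i_k}), (u^{i_k}), (\bar v^{i_k})$ also converge, to $\tilde\alpha, \tilde\beta, \tilde u, \tilde v$ respectively; boundedness of the last three comes from $\|u^k\|\le M$, $\|\bar v^k\|\le M$, and $\beta_k\in[\check\beta,\hat\beta]$. Combining Proposition \ref{p:1027a}\ref{p:1027a-ii} with the key inequality \eqref{d:useful}, namely
\begin{equation*}
\langle T(\ox^{k})+{\ov}^{k},x^{k}-\ox^{k}\rangle\ \geq\ \frac{\alpha_k}{\hat{\beta}}\,\delta\,\|x^k-z^k\|^2,
\end{equation*}
yields $\alpha_{i_k}\|x^{i_k}-z^{i_k}\|^2\to 0$, equivalently $\alpha_{i_k}\|x^{i_k}-z^{i_k}\|\to 0$.

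Next I would split on the limit of $\alpha_{i_k}$ exactly as in Theorem \ref{teo12}.

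\textbf{Case 1:} $\tilde\alpha>0$. Then $\|x^{i_k}-z^{i_k}\|\to 0$, and passing to the limit in the defining relation $z^{i_k}=P_C(x^{i_k}-\beta_{i_k}(T(x^{i_k})+\alpha_{i_k}u^{i_k}))$ from \eqref{zk212*}, using continuity of $T$ and of $P_C$, gives $\tx=P_C(\tx-\tilde\beta(T(\tx)+\tilde\alpha\tilde u))$. Proposition \ref{parada} (with $\tilde u\in\mcN_C(\tx)$ obtained from Fact \ref{nor-cone}) then forces $\tx\in S_*$.

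\textbf{Case 2:} $\tilde\alpha=0$. Set $\tilde\alpha_k:=\alpha_k/\theta$; the {\bf Linesearch \ref{feasible}} criterion was violated at this larger trial value, so
\begin{equation*}
\langle T(\tilde\alpha_k\tilde z^k+(1-\tilde\alpha_k)x^k)+\tilde v^k,\,x^k-\tilde z^k\rangle\ <\ \delta\,\langle T(x^k)+\tilde\alpha_k u^k,\,x^k-\tilde z^k\rangle
\end{equation*}
with $\tilde z^k=P_C(x^k-\beta_k(T(x^k)+\tilde\alpha_k u^k))$ and $\tilde v^k\in\mcN_C(\tilde\alpha_k\tilde z^k+(1-\tilde\alpha_k)x^k)$. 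Since $\tilde\alpha_{i_k}\to 0$, the convex combination collapses to $x^{i_k}\to\tx$, so by Fact \ref{nor-cone} any accumulation $\tilde v$ of $(\tilde v^{i_k})$ lies in $\mcN_C(\tx)$; also $\tilde z^{i_k}\to\tilde z:=P_C(\tx-\tilde\beta T(\tx))$. Passing to the limit and repeating the short chain shown at the end of Theorem \ref{teo12},
\begin{equation*}
0\ \ge\ (1-\delta)\langle T(\tx),\tx-\tilde z\rangle+\langle\tilde v,\tx-\tilde z\rangle\ \ge\ \frac{1-\delta}{\hat\beta}\|\tx-\tilde z\|^2,
\end{equation*}
forces $\tx=\tilde z=P_C(\tx-\tilde\beta T(\tx))$, hence $\tx\in S_*$ by Fact \ref{f:0822b}.

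Finally, with $\tx\in S_*$ an accumulation point and $(x^k)_{k\in\NN}$ Fej\'er convergent to $S_*$ by Proposition \ref{fe}, Fact \ref{punto}\ref{punto-iii} gives $x^k\to\tx\in S_*$.

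The main obstacle, as in Theorem \ref{teo12}, is Case 2: one must carefully track that the \emph{violated} linesearch inequality survives the limiting process with the normal-cone vectors $\tilde v^k$ attached to a point that itself depends on $\tilde\alpha_k$. The remedy is the same as before: exploit $\tilde\alpha_k\to 0$ so that the argument of the normal cone converges to $x^{i_k}\to\tx$, and then close the graph of $\mcN_C$ via Fact \ref{nor-cone}. Everything else is bookkeeping that has already been done for {\bf Variants \ref{A2.2}} and {\bf \ref{A1.1}}.
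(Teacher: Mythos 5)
Your proposal is correct and follows exactly the route the paper intends: the paper's own proof of this proposition is simply the remark ``Similar to the proof of Theorem~\ref{teo12}'', and your write-up is precisely that adaptation, substituting Proposition~\ref{fe} and Proposition~\ref{p:1027a}\ref{p:1027a-ii} for the corresponding facts about {\bf Variant~\ref{A1.1}} and then repeating the two-case analysis on $\tilde\alpha$.
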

\proof
Similar to the proof of Theorem~\ref{teo12}.
\endproof

\subsection{Convergence Analysis of Variant F.3}\label{sec-4.3}

It is easy to check that $C\cap H(\ox^k,{\ov}^k)\cap W(x^k)$ is a closed convex set for each $k$. So, if $C\cap H(\ox^k,{\ov}^k)\cap W(x^k)$ is nonempty, then the next iterate $x^{k+1}$ is well-defined. The following lemma, whose proof is similar to Lemma \ref{lemma:3}, guarantees the non-emptiness.

\begin{lemma}\label{lemma:31}
For all $k\in\NN$, we have \
$S_*\subset C\cap  H(\ox^k,{\ov}^k)\cap W(x^k)$.
\end{lemma}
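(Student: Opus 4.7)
The plan is to mirror the induction argument of Lemma~\ref{lemma:3}, with only the technical change that $\ox^k$ and ${\ov}^k$ here come from \eqref{xbar2} and \eqref{vbar} instead of \eqref{zk212}. The key structural ingredients are still: (a) the inclusion $S_*\subseteq C$ is immediate; (b) Lemma~\ref{propseq} gives $S_*\subseteq H(z,v)$ whenever $(z,v)\in\gr(\mcN_C)$; and (c) Fact~\ref{proj}\ref{proj-ii} converts the optimality of the projection onto $W(x^k)\cap H(\ox^k,{\ov}^k)\cap C$ from $x^0$ into the desired inequality for membership in $W(x^{k+1})$.

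First I would verify the pair $(\ox^k,{\ov}^k)$ lies in $\gr(\mcN_C)$: by \eqref{xbar2} we have $\ox^k=\alpha_k z^k+(1-\alpha_k)x^k$, and by \eqref{zk212*} we chose ${\ov}^k\in\mcN_C(\alpha_k z^k+(1-\alpha_k)x^k)=\mcN_C(\ox^k)$. Hence Lemma~\ref{propseq} yields $S_*\subseteq H(\ox^k,{\ov}^k)$ for every $k\in\NN$. Combined with $S_*\subseteq C$, this takes care of the ``$C\cap H(\ox^k,{\ov}^k)$'' part uniformly in $k$, so the only clause requiring induction is membership in $W(x^k)$.

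Next, I would set up the induction on $k$. For $k=0$, the set $W(x^0)=\{y\in\RR^n:\la y-x^0,x^0-x^0\ra\le 0\}=\RR^n$, so trivially $S_*\subseteq W(x^0)$ and the base case holds. For the inductive step, assume $S_*\subseteq C\cap H(\ox^k,{\ov}^k)\cap W(x^k)$. This in particular makes $C\cap H(\ox^k,{\ov}^k)\cap W(x^k)$ nonempty, closed and convex, so the projection defining $x^{k+1}=P_{C\cap H(\ox^k,{\ov}^k)\cap W(x^k)}(x^0)$ is well-defined. Applying Fact~\ref{proj}\ref{proj-ii} with $x=x^0$, $P_C(x)=x^{k+1}$, and arbitrary $z=x_*\in S_*\subseteq C\cap H(\ox^k,{\ov}^k)\cap W(x^k)$, we obtain
\[
\la x^0-x^{k+1},\,x_*-x^{k+1}\ra\le 0,
\]
which is precisely the defining inequality of $W(x^{k+1})$, so $x_*\in W(x^{k+1})$. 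Together with the already-established $S_*\subseteq C\cap H(\ox^{k+1},{\ov}^{k+1})$ from step one, this gives $S_*\subseteq C\cap H(\ox^{k+1},{\ov}^{k+1})\cap W(x^{k+1})$ and closes the induction.

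There is no real obstacle here: the argument is a direct transcription of the proof of Lemma~\ref{lemma:3}, the only substantive check being that ${\ov}^k$ is a normal vector at $\ox^k$ (rather than at $z^k$ as in the B-variant), which is built into the output specification of {\bf Linesearch~\ref{feasible}} via \eqref{zk212*}. Consequently the proof can essentially conclude with ``the proof proceeds exactly as in Lemma~\ref{lemma:3}.''
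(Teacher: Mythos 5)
Your proof is correct and follows essentially the same route as the paper: induction on $k$ with the base case $W(x^0)=\RR^n$, the inclusion $S_*\subseteq H(\ox^k,\ov^k)$ coming from Lemma~\ref{propseq}, and Fact~\ref{proj}\ref{proj-ii} applied to the projection of $x^0$ giving membership in $W(x^{k+1})$. Your explicit check that $\ov^k\in\mcN_C(\ox^k)$ (so that Lemma~\ref{propseq} applies) is a worthwhile detail that the paper leaves implicit.
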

\proof We proceed by induction. By definition, $\emp\neq S_*\subseteq C$.
By Lemma~\ref{propseq}, $S_*\subseteq  H(\ox^k,{\ov}^k)$, for all $k$.
Since $W(x^0)=\RR^n$, we have $S_*\subseteq
H(\ox^0,{\ov}^0)\cap W(x^0)$.
Assume that $S_*\subseteq H(\ox^k ,{\ov}^k)\cap
W(x^k)$. So $x^{k+1}=P_{C\cap
H(\ox^k,{\ov}^k)\cap W(x^k)}(x^0)$ is well-defined. Then, by
Fact~\ref{proj}\ref{proj-ii}, we have $\langle x_{*}-x^{k+1}, x^0-x^{k+1}\rangle \leq0$ for all $x_{*}\in S_*$.
This implies $x_{*}\in W(x^{k+1})$, and hence, $S_* \subseteq H(\ox^{k+1},{\ov}^{k+1})\cap W(x^{k+1})$. Thus, the conclusion
follows by induction principle.
\endproof

The next lemma shows that the sequence $(x^k)_\kkk$ remains in a ball determined by the initial point.

\begin{lemma}\label{l:lim1} 
Let $\ox=P_{S_*}(x^0)$ and $\rho={\rm dist}(x^0, S_*)$.
Then $(x^k)_{k\in \NN}\subset
B\left[\frac{1}{2}(x^0+\ox),\frac{1}{2}\rho\right]\cap C$, in particular, $(x^k)_{k\in \NN}$ is bounded.
\end{lemma}
\proof
It follows from Lemma \ref{lemma:31} that $S_* \subseteq H(\ox^k,{\ov}^k) \cap W(x^k)$, for all $k\in\NN$. The remaining argument is similar to the proof of Lemma~\ref{l:lim}.
\endproof

\begin{theorem}\label{l:optimalidad2} All accumulation points of $(x^k)_{k\in\NN}$ belong to $S_*$. 
\end{theorem}
\proof
Since $W(x^k)$ is a halfspace with normal $x^0-x^k$, we have $x^k=P_{W(x^k)}(x^0)$.
So, by the firm nonexpansiveness of $P_{W(x^k)}$ and $x^{k+1}\in W(x^k)$, we have $\|x^{k+1}-x^k\|^2\leq\|x^{k+1}-x^0\|^2-\|x^k-x^0\|^2$. Thus, $(\|x^k-x^0\|)_{k\in \NN}$ is monotone and nondecreasing.
Moreover, by Lemma~\ref{l:lim1}, $(\|x^k-x^0\|)_\kkk$ is bounded, thus, converges. It follows that
\begin{equation}\label{dif:to:0}
\lim_{k\rightarrow\infty}\| x^{k+1}-x^k\|=0.
\end{equation}
Since $x^{k+1}\in H(\ox^k,{\ov}^k)$, we get
$0\geq \langle T(\ox^{k})+{\ov}^{k},x^{k+1}-\ox^{k}\ra$,
where ${\ov}^k$ and $\ox^k$ are obtained in Steps~2 and~3, respectively. By the formulas of $\ox^k$ in Step~3 and \eqref{zk212*-c}, we derive
\begin{equation}\label{e:optimalidad2a}
\begin{aligned}
0&\geq \langle T(\ox^{k})+{\ov}^{k},x^{k+1}-x^{k}\rangle + \alpha_{k}\big{\la} T(\ox^{k})+{\ov}^{k},x^{k}-z^{k}\big{\ra}\\
&\geq \langle T(\ox^{k})+{\ov}^{k},x^{k+1}-x^{k}\ra+\alpha_{k}\delta\la
T (x^k)+\alpha_k u^k, x^k-z^k \ra.
\end{aligned}
\end{equation}
Next, Fact~\ref{proj}\ref{proj-iii} implies $\|x^k-z^k\|^2\leq \beta_k\langle T (x^k)+\alpha_k u^k, x^k-z^k\ra$. Thus, combining with \eqref{e:optimalidad2a} yields
\begin{equation}\label{eq22}
\begin{aligned}
\frac{\alpha_{k}\delta}{\beta_k}\| x^k-z^k \|^2 
&\leq \alpha_k\delta\langle T (x^k)+\alpha_k u^k, x^k-z^k\rangle \\
&\leq -\langle T(\ox^{k})+{\ov}^{k},x^{k+1}-x^{k}\ra
\leq\|T(\ox^{k})+{\ov}^{k}\|\cdot\|x^{k+1}-x^{k}\|.
\end{aligned}
\end{equation}
Choosing a subsequence $(i_k)$ such that the subsequences
$(\alpha_{i_k})_{k\in \NN}$, $(u^{i_k})_{k\in \NN}$,
$(\beta_{i_k})_{k\in \NN}$, $(x^{i_k})_{k\in \NN}$ and
$({\ov}^{i_k})_{k\in \NN}$ converge to $\tilde{\alpha}$,
$\tilde{u}$, $\tilde{\beta}$, $\tilde{x}$, and $\tilde{v}$,
respectively (this is possible by the boundedness of these sequences). Using \eqref{dif:to:0} and taking the limit in \eqref{eq22} along $(i_k)_{k\in\NN}$,
we get
\begin{equation}\label{zero22}
\lim_{k\to \infty}\alpha_{i_k}\|x^{i_k} - z^{i_k}\|^2=0.
\end{equation}
Now we consider two cases,

\noindent {\bf Case~1:} $\disp\lim_{k\to\infty} \alpha_{i_k}=\tilde{\alpha} > 0$. By \eqref{zero22}, $
\disp\lim_{k\to \infty}\|x^{i_k} - z^{i_k}\|^2=0$. By continuity of the projection,
we have $\tilde{x}=P_C\big(\tilde{x}-\tilde{\beta}(T(\tilde{x})+\tilde{\alpha}\tilde{u})\big)$. So, $\tilde{x}\in S_*$ by Proposition~\ref{parada}.

\noindent {\bf Case~2:} $\disp\lim_{k\to\infty} \alpha_{i_k} = 0$. Similar to the proof of Theorem \ref{teo12}, we also obtain $\tilde{x}\in S_*$.

Thus, all accumulation points of $(x^k)_{k\in\NN}$ are in $S_*$. 
\endproof

\medskip
Finally, by reasoning analogously to the proof of Theorem~\ref{Teo-F}, we derive the convergence result.

\begin{theorem}\label{l:optimalidad2b} The sequence $(x^k)_{k\in \NN}$ converges to $\ox=P_{S_*}(x^0)$.
\end{theorem}

\section{An Example}
\label{s:expl}

In this section, we apply the proposed algorithms (with and without normal vectors) to an instance of problem \eqref{prob}. We will see that the use of normal vectors to the feasible set might be beneficial.
\begin{example}\label{ex:1}
Let $B:=(b_1,b_2)\in\RR^2$ recall that the (clockwise) rotation with angle $\gamma\in[-\pi/2,\pi/2]$ around $B$ is given by
\begin{equation*}
\mathcal{R}_{\gamma,B}:\RR^2\to\RR^2:x\mapsto
\bigg[\,\begin{aligned}
\cos\gamma &\ \ \ \sin\gamma\\
-\sin\gamma &\ \ \ \cos\gamma
\end{aligned}\,\bigg](x-B)+B,
\end{equation*}
We consider problem \eqref{prob} in $\RR^2$ with the operator $
T:=\mathcal{R}_{-\tfrac{\pi}{2},B}-\Id$  where $B:=(\tfrac{1}{2},1)$, and the feasible set is given as
 $$C:=\left\{(x_1,x_2)\in \RR^2 :\ \ x_1^2+x_2^2\leq 1,\ x_1\leq 0,\ x_2\geq 0\right\}.$$
\end{example}
Note that operator $T$ is Lipschitz continuous with constant $L=2$, but not monotone. Now we prove that $T$ satisfies \ref{a2}, i.e., $\Sdual=S_*$. Let
us split our analysis into two parts.

\noindent {\bf Part~1:} (The primal problem has a unique solution). For $x:=(x_1,x_2)\in\RR^2$, consider the operator
\begin{equation}\label{e:150130a}
T(x):=
\left[\begin{matrix}
0 & -1\\1 & 0
\end{matrix}\right](x-B)+B-x
=
\left[\begin{matrix}
-1 & -1\\1 & -1
\end{matrix}\right]x+
\left[\begin{matrix}
3/2\\1/2
\end{matrix}\right].
\end{equation}
We will show that the primal variational inequality problem \eqref{prob}, has a unique solution.
Indeed, notice that the solution (if exists); cannot lie in the interior of $C$ (because $T(x)\neq 0$ for all $x\in C$); and also
cannot lie on the two segment $\{0\}\times[0,1]$ and $[-1,0]\times\{0\}$ (by direct computations).
Thus, the solution must lie on the arc $\Gamma:=\{(x_1,x_2)\in\RR^2\,|\,x_1^2+x_2^2=1,x_1\leq 0,x_2\geq0\}.$ Using polar coordinates, set $x=(\cos t,\sin t)\in\Gamma$, $t\in(\pi/2,\pi)$. Then,
$$
T(x)=\left[\begin{matrix}
-\cos t-\sin t+\tfrac{3}{2}\\\cos t-\sin t+\tfrac{1}{2}
\end{matrix}\right].
$$
Since $x_*\in S_*$ , the vectors $x_*$ and $T(x_*)$ must be parallel. Hence,
\begin{subequations}
\begin{align}
\frac{-\cos t_*-\sin t_*+\tfrac{3}{2}}{\cos t_*}&=\frac{\cos t_*-\sin t_*+\tfrac{1}{2}}{\sin t_*}\nonumber\\
-\sin t_*\cos t_*-\sin^2 t_*+\tfrac{3}{2}\sin t_*&=
\cos^2 t_*-\cos t_*\sin t_*+\tfrac{1}{2}\cos t_*\nonumber\\
\tfrac{3}{2}\sin t_*-\tfrac{1}{2}\cos t_*&=1\nonumber\\
\tfrac{3}{\sqrt{10}}\sin t_*-
\tfrac{1}{\sqrt{10}}\cos t_* &=\tfrac{2}{\sqrt{10}}\nonumber\\
\sin\left(t_*-\arcsin(\tfrac{1}{\sqrt{10}})\right)&=\tfrac{2}{\sqrt{10}}.\nonumber
\end{align}
\end{subequations}
Since $t\in(\pi/2,\pi)$ for all $x\in C$, we have
$
t_*=\pi-\arcsin(\tfrac{2}{\sqrt{10}})+\arcsin(\tfrac{1}{\sqrt{10}})\approx 2.7786.
$
Then, the unique solution is
$
x_*=(\cos t_*,\sin t_*)\approx(-0.935,0.355).
$

\noindent {\bf Part~2:} (The primal solution is also a solution of
the  dual problem). Now, we will show that $x_*$ is a solution to
the dual problem and as consequence of the continuity of $T$ and
Fact~\ref{by-cont} the result follows.  If $x_*\in \Sdual$, $
\langle T(y),y-x_*\rangle \geq 0$ for all $y\in C$. First, notice that $\|x_*\|=1$ and
\begin{equation*}
T(x_*)\approx\left[\begin{matrix}
-1 & -1\\1 & -1
\end{matrix}\right]
\left[\begin{matrix}
-0.935\\
0.355
\end{matrix}\right]
+
\left[\begin{matrix}
3/2\\1/2
\end{matrix}\right]
\approx
\left[\begin{matrix}
2.08\\-0.79
\end{matrix}\right]
\approx -2.22\,x_*.
\end{equation*}
So, we can write
\begin{equation}\label{e:150130c}
T(x_*)=\gamma(-x_*)\quad
\text{where}\quad 2<\gamma\approx 2.22.
\end{equation}
On the other hand, from \eqref{e:150130a}, we can check that $
\langle T(y)-T(x_*),y-x_*\rangle =-\|y-x_*\|^2, \quad  \forall y\in\RR^2$. 
(This is why $T$ is never monotone!). It follows that $
\langle T(y),y-x_*\rangle =\langle T(x_*),y-x_*\ra-\|y-x_*\|^2$. 
Thus, it suffices to prove
\begin{equation}\label{e:150130b}
\langle T(x_*),y-x_*\rangle \geq\|y-x_*\|^2
\quad \text{for all}\quad y\in C.
\end{equation}
Take $y\in C$, so $\|y\|\leq 1$. we define $z=\disp \frac{x_*+y}{2}$. Then,
\[
\langle z,z-x_*\rangle =
\tfrac{1}{2}\langle y+x_*,z-x_*\rangle =
\tfrac{1}{4}\langle y+x_*,y-x_*\rangle =\tfrac{1}{4}(\|y\|^2-\|x_*\|^2)\leq 0,
\]
implying that $
\langle z-x_*,z-x_*\rangle =\langle z,z-x_*\rangle +
\langle -x_*,z-x_*\rangle \leq \langle -x_*,z-x_*\ra$.
Combining the last inequality with the definition of $z$, we get
\begin{align*}
0&\leq\|y-x_*\|^2=4\|z-x_*\|^2=4\langle z-x^*,z-x_*\rangle \leq 4\langle -x_*,z-x_*\rangle \\
&=2\langle -x^*,y-x_*\ra
<\gamma\langle -x_*,y-x_*\rangle =\langle \gamma(-x_*),y-x_*\ra
=\langle T(x_*),y-x_*\rangle ,
\end{align*}
where we use \eqref{e:150130c} in the last inequality. This proves
\eqref{e:150130b} and thus complete the proof. Consequently, $T$ satisfies \ref{a2} and the unique solution of the problem is
$x_*\approx(-0.935,0.355)$.

We now apply the proposed algorithms (with and without normal vectors) to the above problem. In Figures~\ref{fig:B1}--\ref{fig:F3} below, we show the first {\em five} iterations of sequences $(y^k)_{k\in\NN}$ (generated without normal vectors) and $(x^k)_{k\in\NN}$ (generated with nonzero normal vectors). 
\begin{figure}[h!]
\centering
\begin{minipage}{.49\textwidth}
\includegraphics[width=.9\textwidth]{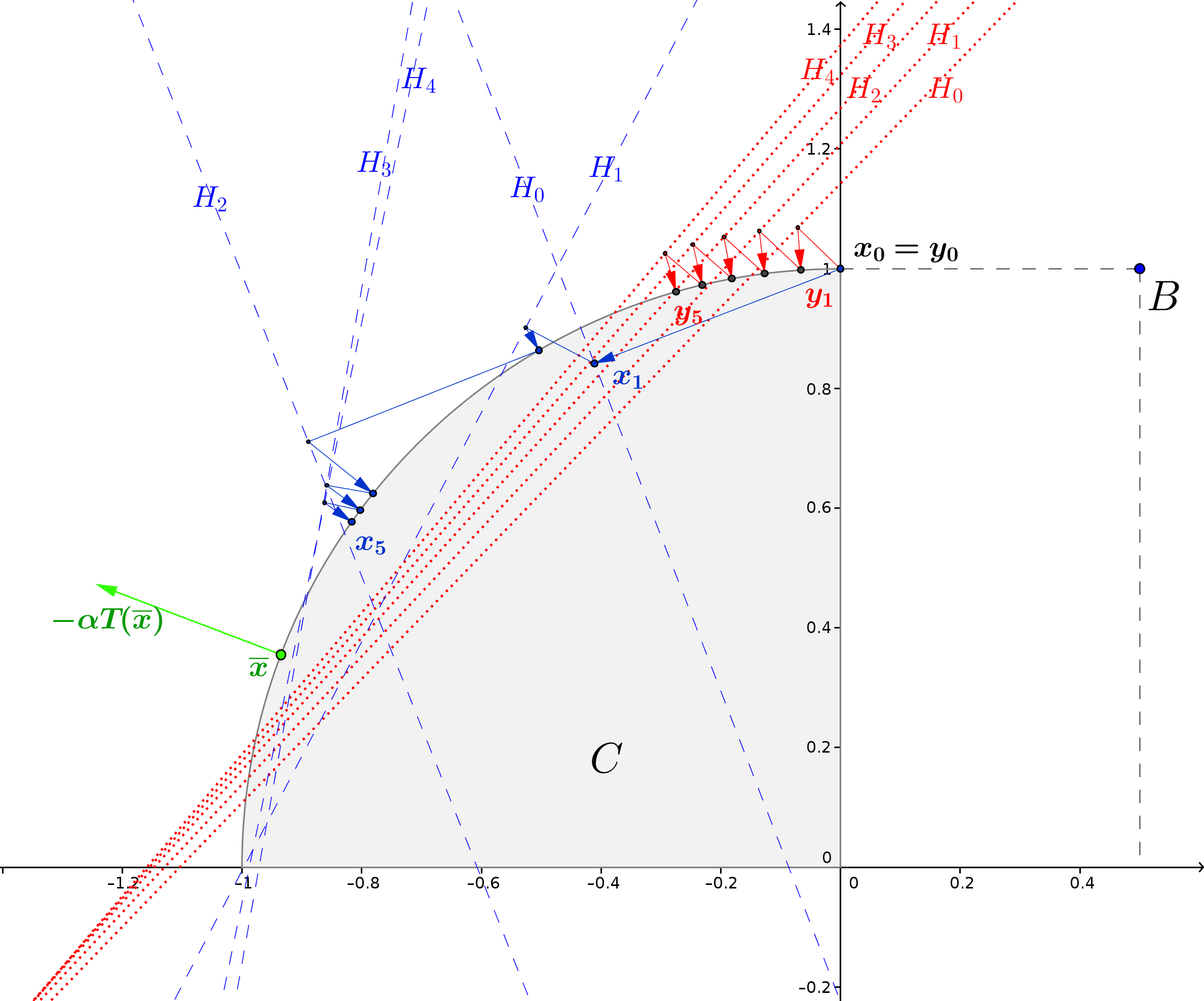}
\caption{{\bf Variant~B.1}.}
\label{fig:B1}
\end{minipage}
\begin{minipage}{.49\textwidth}
\includegraphics[width=.9\textwidth]{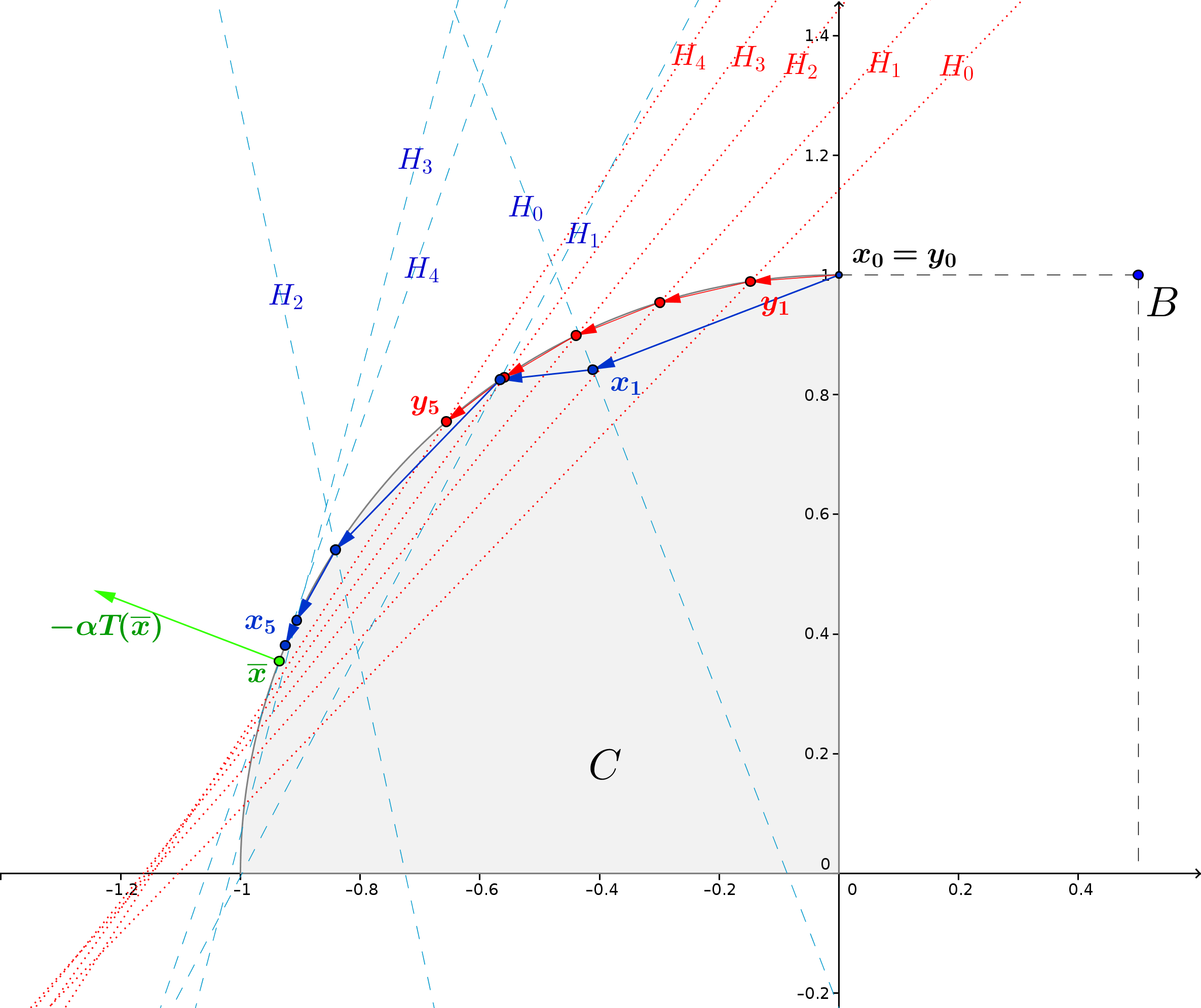}
\caption{{\bf Variant~B.2}.}
\end{minipage}
\end{figure}
\begin{figure}[h!]
\centering
\begin{minipage}{.49\textwidth}
\includegraphics[width=.9\textwidth]{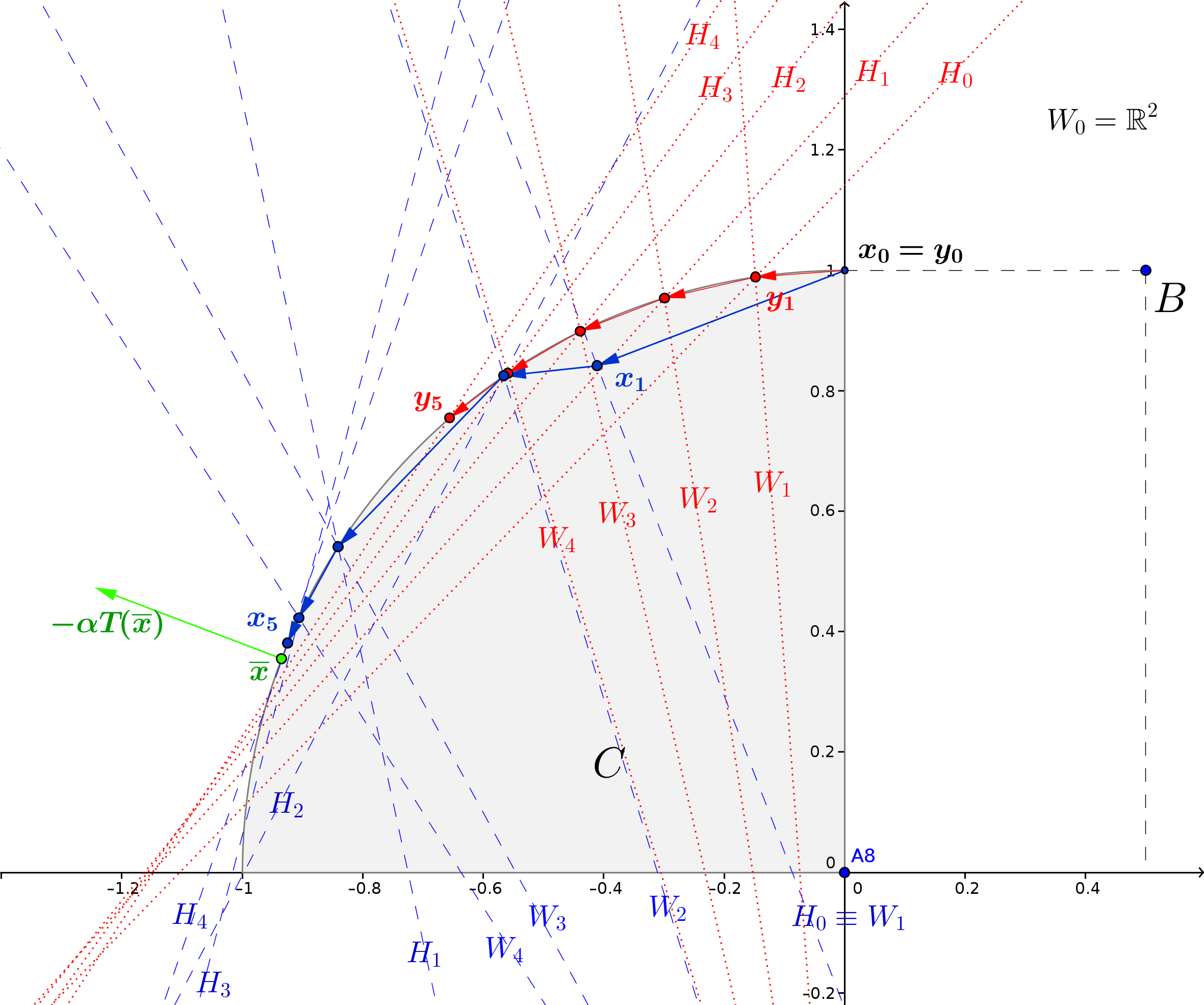}
\caption{{\bf Variant~B.3}.}
\end{minipage}
\begin{minipage}{.49\textwidth}
\includegraphics[width=.9\textwidth]{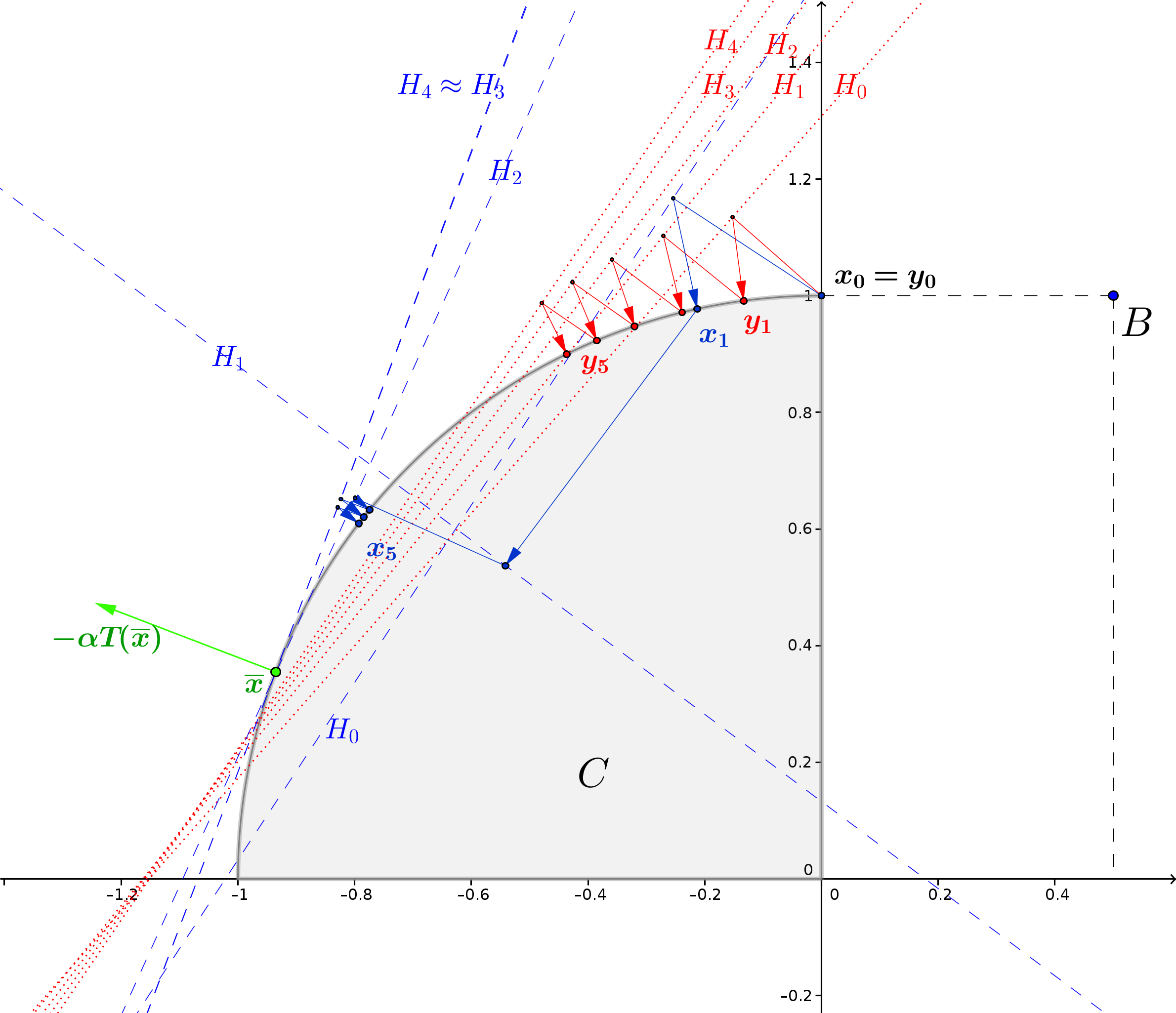}
\caption{{\bf Variant~F.1}.}
\end{minipage}
\end{figure}
\begin{figure}[h!]
\centering
\begin{minipage}{.49\textwidth}
\includegraphics[width=.9\textwidth]{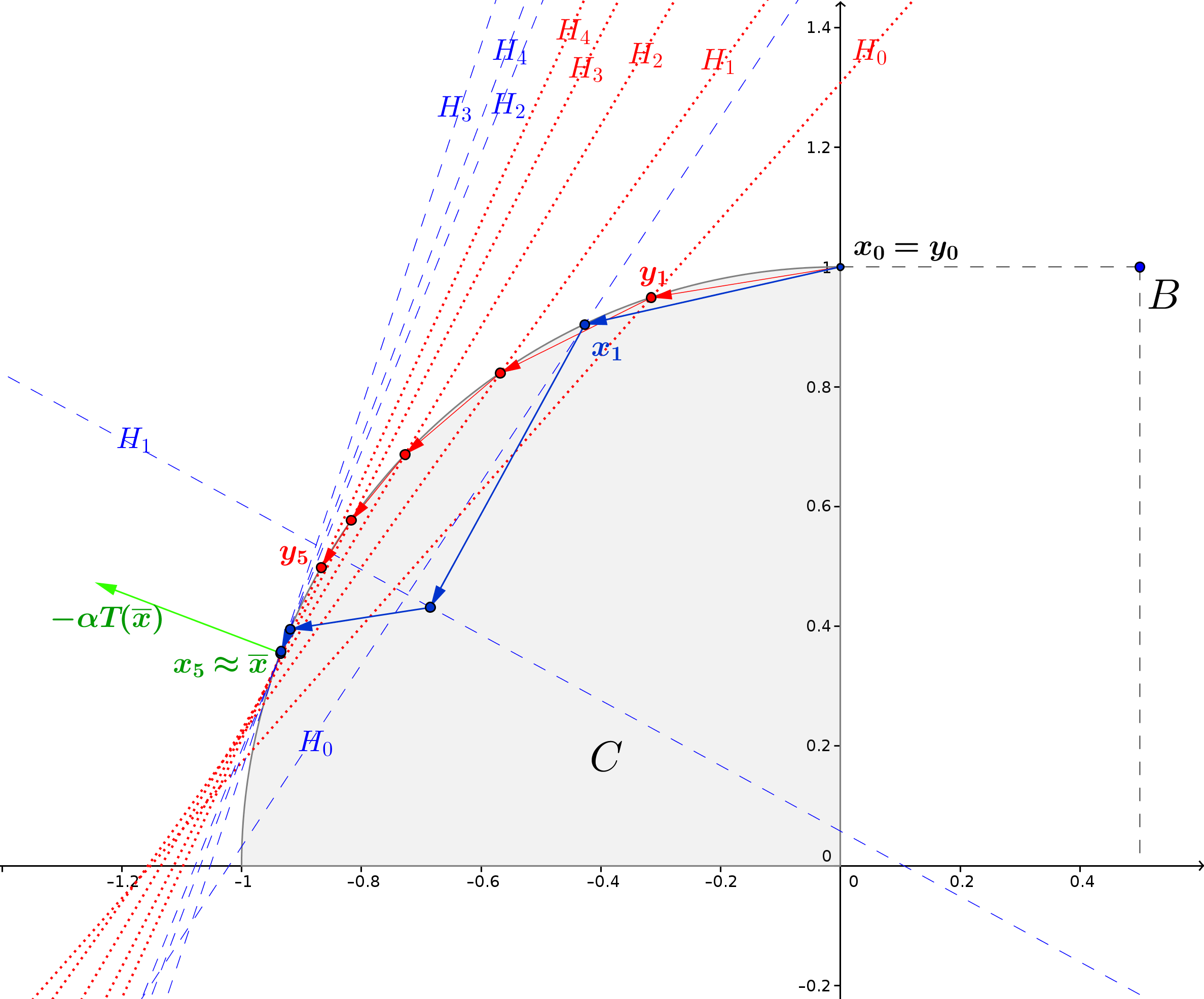}
\caption{{\bf Variant~F.2}.}
\end{minipage}
\begin{minipage}{.49\textwidth}
\includegraphics[width=.9\textwidth]{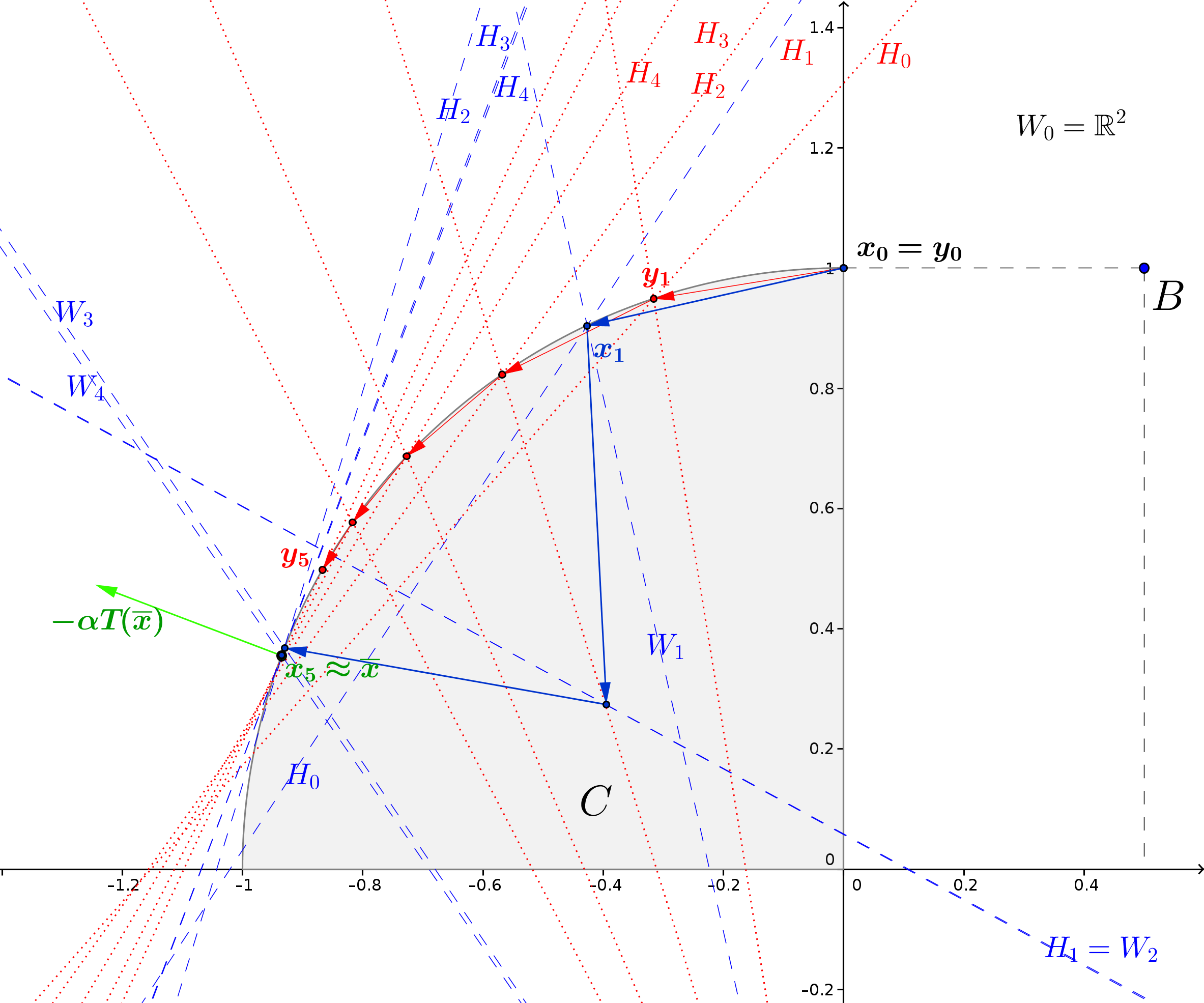}
\caption{{\bf Variant~F.3}.}
\label{fig:F3}
\end{minipage}
\end{figure}
The performance suggests that our approach can be used in a {\em hybrid} scheme that takes advantage of normal vectors in early iterations.
\section{Conclusion}\label{sec-7}
In this paper, we have proposed two conceptual conditional extragradient algorithms that generalize classical extragradient algorithms for solving constrained variational inequality problems (VIP). The main idea is to use nonzero normal vectors to the
feasible set to improve the convergence. This approach uses two different linesearches extending several known projection algorithms for VIP. 
These linesearches allow us to find suitable halfspaces containing the solution set of the problem by using nonzero normal vectors of the feasible set. It is well-known in the literature that such procedures are very effective in absence of Lipschitz continuity exploiting most of the information available at each iteration to produce possibly long steplengths. Convergence results are also established assuming existence of solutions, continuity and a weaker condition than pseudomonotonicity on the operator enlarging the class of VIP that we can solve. This is a humble attempt in targeting more efficient variants which may permit to find the optimal choice of normals on the feasible set. 

Several of the ideas of this paper merit further investigation, some of which would be presented in future work. In particular, we are working on variants of the projection algorithms proposed in \cite{yunier-reinier-S} for solving nonsmooth variational inequalities. The difficulties of extending this previous result to point-to-set operators are non-trivial, the main obstacle lies in the impossibility to use linesearches or separating techniques. To the best of our knowledge, variants of the linesearches for variational inequalities require smoothness of $T$: even for nonsmooth convex optimization problems ($T=\partial f$), it is not possible make linesearch because the negative subgradients are not always descent directions. Actually, a few explicit methods have been proposed in the literature for solving
nonsmooth monotone variational inequality problems (see, e.g., \cite{regina-svaiter-2005,he}). Moreover, future work will address further investigation on the modified Forward-Backward splitting iteration for inclusion
problems \cite{yu-re-2, yun-reinier-1, Tseng}, exploiting the
additive structure of the main operator and adding dynamic choices of the stepsizes with conditional and deflected techniques \cite{cond,francolli}. 

\section*{Acknowledgments}
JYBC was partially supported by a startup research grant of Northern Illinois University and by the National Science Foundation grant DMS-1816449. HMP was partially supported by Autodesk, Inc. via a gift made to the Department of Mathematical Sciences, University of Massachusetts Lowell.
This work was initiated while JYBC and HMP were visiting the University of British Columbia Okanagan (UBCO). They are very grateful to the Irving K. Barber School of Arts and Sciences
at UBCO and particularly to Heinz H. Bauschke and Shawn Wang for the generous hospitality. The authors also thank the anonymous  referees  for  their  valuable  suggestions.

\end{document}